\documentclass[final,3p,times]{elsarticle}
\usepackage{amssymb}
\usepackage{amsmath}
\usepackage{amsthm}
\usepackage{units}
\usepackage{epstopdf}
\usepackage{xcolor,multicol,booktabs,calligra}
\usepackage{tabularx}
\usepackage{threeparttable}
\usepackage{ragged2e}
\usepackage{tabulary}
\usepackage{longtable}
\usepackage{graphics}
\usepackage{algorithm}
\usepackage{algpseudocode}
\usepackage{subcaption}
\usepackage{enumitem}

\allowdisplaybreaks

\numberwithin{equation}{section}
\newtheorem{proposition}{Proposition}[section]
\newtheorem{lemma}{Lemma}[section]

\newtheorem{theorem}{Theorem}[section]

\usepackage[colorlinks,
            linkcolor=green!88!black,        %%
            anchorcolor=green!55!black,     %%
            citecolor=green!66!black,       %%
            ]{hyperref}

\begin{document}

\begin{frontmatter}

%% Title, authors and addresses
%% use the tnoteref command within \title for footnotes;
%% use the tnotetext command for theassociated footnote;
%% use the fnref command within \author or \affiliation for footnotes;
%% use the fntext command for theassociated footnote;
%% use the corref command within \author for corresponding author footnotes;
%% use the cortext command for theassociated footnote;
%% use the ead command for the email address,
%% and the form \ead[url] for the home page:
%% \title{Title\tnoteref{label1}}
%% \tnotetext[label1]{}
%% \author{Name\corref{cor1}\fnref{label2}}
%% \ead{email address}
%% \ead[url]{home page}
%% \fntext[label2]{}
%% \cortext[cor1]{}
%% \affiliation{organization={},
%%             addressline={},
%%             city={},
%%             postcode={},
%%             state={},
%%             country={}}
%% \fntext[label3]{}

\title{iSMART: An Iterative Sampling-and-Regression Technique for Solving Martingale-Based PDEs}
%\tnotetext[label1]{This work was supported by National Key R\&D Program of China under grant 2021YFA1003301, the National Science Foundation of China under grant 12288101, and the Peking University Boya Postdoctoral Fellowship.}
%% use optional labels to link authors explicitly to addresses:
%% \author[label1,label2]{}
%% \affiliation[label1]{organization={},
%%             addressline={},
%%             city={},
%%             postcode={},
%%             state={},
%%             country={}}
%%
%% \affiliation[label2]{organization={},
%%             addressline={},
%%             city={},
%%             postcode={},
%%             state={},
%%             country={}}

%% Author affiliation
\author{Tiejun Li$^{1,2,3}$, Xiaoguang Li$^{4}$, Fugui Ma$^{1~*}$\\[10pt]
\it{\small$^1$ LMAM and School of Mathematical Sciences, Peking University, Beijing 100871, China.}\\
\it{\small$^2$ Center for Machine Learning Research, Peking University, Beijing 100871, China.}\\
\it{\small$^3$ National Engineering Laboratory for Big Data Analysis and Applications, Peking University, Beijing 100871, China.}\\
\it{\small$^4$ MOE-LCSM, School of Mathematics and Statistics, Hunan Normal University, Changsha 410081, China.}\\
}
\date{}
\cortext[cor1]{Email addresses: tieli@pku.edu.cn (T. Li), lixiaoguang@hunnu.edu.cn (X. Li), and  mafugui@math.pku.edu.cn (F. Ma).}
%\maketitle\S

%% Abstract
\begin{abstract}
We propose the {\bf i}terative {\bf S}a{\bf M}pling-{\bf A}nd-{\bf R}egression {\bf T}echnique (iSMART) for high-dimensional martingale-based  partial differential equations (PDEs) in this paper. By leveraging the $L^2$-projection property of conditional expectation and adopting the stop-gradient technique, iSMART reformulates the continuous martingale condition derived from PDEs into a sequence of tractable sampling-regression problems within an iterative framework. This approach relies solely on standard SDE path simulation and plain squared-error loss minimization, completely bypassing the need for adversarial optimization or nested expectation estimation in previous methods.  iSMART accommodates linear, semi-linear, and fully nonlinear martingale-based PDEs within a unified iterative procedure. In particular, for fully nonlinear Hamilton-Jacobi-Bellman (HJB) equations, a freezing-and-compensating technique is introduced to strategically shift a portion of the nonlinearity into the SDE drift, thereby improving the convergence behavior of the iterations. Numerous numerical experiments on linear reaction-diffusion equations with sharp gradients, semilinear Burgers-type equations, and fully nonlinear HJB equations demonstrate the accuracy, efficiency, and robustness of the proposed approach in various high dimensions.
\end{abstract}

%%Graphical abstract
%\begin{graphicalabstract}
%%\includegraphics{grabs}
%\end{graphicalabstract}

%%Research highlights Highlights should consist of 3 to 5 bullet points, each a maximum of 85 characters, including spaces.
%\begin{highlights}
%\item Research highlight 1
%\item Research highlight 2
%\end{highlights}

%% Keywords
\begin{keyword}
%You are required to provide 1 to 7 keywords for indexing purposes. Keywords should be written in English. Please try to avoid keywords consisting of multiple words (using "and" or "of").

%% keywords here, in the form: keyword \sep keyword
Martingale-based PDEs \sep Deep neural networks \sep Least-squares regression problems \sep Freezing-and-compensating sampling  \sep Hamilton-Jacobi-Bellman
%% PACS codes here, in the form: \PACS code \sep code

%% MSC codes here, in the form: \MSC code \sep code
%% or \MSC[2008] code \sep code (2000 is the default)

\end{keyword}

\end{frontmatter}

%% Add \usepackage{lineno} before \begin{document} and uncomment
%% following line to enable line numbers
% \linenumbers

%% main text
%%

%% Use \section commands to start a section
\section{Introduction}
\label{sec1}
The curse of dimensionality  \cite{Bellman57,Bellman61} renders conventional grid-based methods impractical for high-dimensional PDEs. The deep neural networks (DNN), by virtue of their universal approximation property, provide a natural foundation for constructing promising alternative solvers. In this work, we propose a new DNN-based solver tailored to martingale-based PDEs, which can be demonstrated as the following terminal-value problem for $d\in\mathbb{N}_{+}$,
\begin{equation}\label{eq:PDEs}
\left\{\begin{aligned}
&\partial_tu(t,x)+\mathcal{A}u(t,x)=f\Big(t,x,u(t,x),\nabla_x u(t,x)\Big),
&& (t,x)\in[0,T)\times\mathbb{R}^{d},\\
& u(T,x)=g(x),
&& x\in\mathbb{R}^{d},
\end{aligned}\right.
\end{equation}
where the source term $f:[0,T]\times\mathbb{R}^{d}\times\mathbb{R}\times\mathbb{R}^{d}\to\mathbb{R}$ and terminal condition $g:\mathbb{R}^d\to\mathbb{R}$ are smooth functions. The second-order differential operator $\mathcal{A}$ is defined by
\begin{equation}
\mathcal{A}\,u:=\mu^{\top}\left(t,x,u,\nabla_x u\right)\nabla_x u
+\frac{1}{2}\mathrm{Tr}\left\{\sigma\sigma^{\top}\big(t,x,u\big)\nabla_x^2u\right\}.
\end{equation}
Here, the drift coefficient $\mu$ is a mapping from $[0,T]\times\mathbb{R}^{d}\times\mathbb{R}\times\mathbb{R}^{d}\to\mathbb{R}^{d}$, while the diffusion coefficient $\sigma:[0,T]\times\mathbb{R}^{d}\times\mathbb{R}\to\mathbb{R}^{d\times q}$ such that $\sigma\sigma^\top$ satisfies uniform ellipticity condition. $\mathrm{Tr}$ denotes trace operator. Under certain regularity conditions, Equation \eqref{eq:PDEs} admits unique smooth solution $u:[0,T]\times\mathbb{R}^{d}\to\mathbb{R}$ (see \cite{Friedman64}, while we will discuss it in detail in section \ref{Sec:convAnal}), so that whose spatial gradient $\nabla_x\,u(t,x)\in\mathbb{R}^{d}$ and Hessian $\nabla_x^{2}u(t,x)\in\mathbb{R}^{d\times d}$. The main equation in \eqref{eq:PDEs} involves quite a wide range of models and applications, like Black-Scholes equation \cite{Chen2026} in financial engineering; Fokker-Planck equation \cite{zhao2024epr} and McKean-Vlasov equation \cite{Han24} in physics and bio-chemistry; the coupled forward-backward PDE systems in mean-field games and Hamilton-Jacobi-Bellman (HJB) equation in stochastic optimal control \cite{Han18,Capponi23} and so forth.

Despite the different application backgrounds and different forms for $\mu,\sigma$ and $f$, the sufficiently regular solutions of Equation \eqref{eq:PDEs} share a common martingale structure. For any $(t,x)\in[0,T]\times\mathbb{R}^d$, let $\mathbf{X}_s$ ($s\in [t,T]$), satisfy the associated It\^{o} dynamics
\begin{equation}\label{eq:Ito}
\mathrm{d}\mathbf{X}_s = \mu\Bigl(s,\mathbf{X}_s,u(s,\mathbf{X}_s),\nabla_x\, u(s,\mathbf{X}_s)\Bigr)\,\mathrm{d}s+\sigma\Bigl(s,\mathbf{X}_s,u(s,\mathbf{X}_s)\Bigr)\,\mathrm{d}\mathbf{B}_s,
\qquad \mathbf{X}_t = x.
\end{equation}
A direct application of It\^{o}'s formula gives
\begin{equation}\label{eq:Itof}
u(s,\mathbf{X}_s)-\int_{t}^{s}f\Bigl(\tau,\mathbf{X}_\tau,u(\tau,\mathbf{X}_\tau),\nabla_x u(\tau,\mathbf{X}_\tau)\Bigr)\,\mathrm{d}\tau=u(t,x)+\int_{t}^{s} \Bigl(\nabla_x u(\tau,\mathbf{X}_\tau)\Bigr)^{\top}
\sigma\Bigl(\tau,\mathbf{X}_\tau,u(\tau,\mathbf{X}_\tau)\Bigr)\,\mathrm{d}\mathbf{B}_\tau.
\end{equation}
Consequently, under standard regularity and integrability conditions, the compensated process
\begin{equation}
\mathfrak{M}^{t,x}_s:=u(s,\mathbf{X}_s)-u(t,x)-\int_{t}^{s}f\Big(\tau,\mathbf{X}_{\tau},u(\tau,\mathbf{X}_\tau),\nabla_x u(\tau,\mathbf{X}_\tau)\Big)\mathrm{d}\tau,\qquad s\in[t,T],
\end{equation}
is a zero-mean local martingale (see, e.g., \cite{Pardoux92,Ma94, ELV2019}). This martingale characterization provides a unified probabilistic structure for the PDEs in \eqref{eq:PDEs} and motivates our numerical method, which approximates $u$ by a neural network and enforces the martingale property along simulated trajectories of $\mathbf{X}_{\tau}$.

Taking advantage of the universal approximation nature of DNN, over the past several years, a variety of DNN-based methods have been developed. The Deep Ritz Method \cite{E18} makes use of the variational formulation of PDE and surrogates the solution by a DNN. The Deep Galerkin Method (DGM, \cite{Sirignano18}) and Physics-Informed Neural Networks (PINNs, \cite{Raissi19}) replace the solution by DNN and reformulate the PDEs into residual minimization problems. The Weak Adversarial Networks (WANs, \cite{Zang20}) leverages the weak formulation of the PDE.  It casts the solution and test functions as a generator-discriminator pair that optimises a min-max loss function adversarially.  A parallel line of research, operator learning, aims to learn the mapping between function spaces rather than a single instance; the Fourier Neural Operator (FNO, \cite{li2021fourier}) and its variant Multi-scale variant (MscaleFNO, \cite{YouC26}) learn kernel functions in the frequency domain, with applications to highly oscillatory wave scattering problems, among others. The mesh-free nature of these DNN-based methods makes them convenient to apply to high dimensional problems with irregular domain.

A parallel route, rooted not in the PDE operator but in its probabilistic representation, has also flourished. The Feynman-Kac formula \cite{oksendal2013stochastic, ELV2019} and Pardoux-Peng theory \cite{Pardoux90, Pardoux92} establish a bridge between PDEs and Forward Backward Stochastic Differential Equations (FBSDEs), giving rise to a class of SDE-based deep learning methods. The pioneering Deep BSDE method \cite{Han18} employs neural networks to approximate the solution and its gradient along simulated FBSDE trajectories, an idea subsequently extended in a number of directions \cite{Hure20,Beck21,zhang2022fbsde,Jaemin26}, with theoretical underpinnings developed in parallel \cite{Han20,Jentzen21,Grohs23}. More recently, grounded in the probabilistic interpretation of \eqref{eq:PDEs}, DeepMartNet \cite{Cai2026a} adopts Varadhan's martingale problem formulation and trains the network by enforcing a conditional-expectation constraint along Itô diffusion paths, demonstrating success on high-dimensional Dirichlet and elliptic eigenvalue problems. Following this line, the Deep Random Difference Method (DRDM, \cite{Cai26b}) extends the martingale framework to time-dependent quasilinear parabolic PDEs via a Galerkin variational formulation, offering the notable benefit of a derivative-free, random-difference implementation.

To motivate the design of iSMART, let us briefly introduce the DeepMartNet and DRDM, as they are the most closely related predecessors to our approach. DeepMartNet \cite{Cai2023,Cai2026a} constructs its loss (see its continuous counterpart in \eqref{eq:LosDeepMartNet}) directly from the conditional-expectation condition,
\begin{equation}
\label{eq:deepmartnet_loss}
\mathrm{Loss}_{\mathrm{DeepMartNet}}(\theta):=\frac{1}{N}\sum_{i=0}^{N-1}\frac{1}{|A_i|^2}
\left[\sum_{m\in A_i}\left( u_\theta\bigl(\mathbf{X}_{i+k}^{(m)}\bigr)-u_\theta\bigl(\mathbf{X}_i^{(m)}\bigr)-\Delta t\sum_{l=0}^k w_l \left[f\left(\mathbf{X}_{i+l}^{(m)},u_\theta(\mathbf{X}_{i+l}^{(m)})\right)-v_\theta\left(\mathbf{X}_{i+l}^{(m)}\right)\right]\right)\right]^2,
\end{equation}
where $N$ is the number of time steps, $A_i$ the minibatch at time $t_i$, $|A_i|$ its size, $k$ the martingale increment step length, $w_l$ the trapezoidal-rule weights, and $v_\theta$ the network approximating the nonlinearity. This formulation is conceptually simple and avoids explicit spatial differentiation of $u_\theta$. The loss essentially computes two nested empirical expectations: an inner average over the minibatch to approximate the conditional expectation, and an outer average over time steps to accumulate the squared deviation. Its accuracy therefore hinges on the quality of the minibatch approximation to the underlying conditional expectation. Larger minibatches reduce sampling variance and may improve training stability, but at the cost of increased trajectory and network evaluations per iteration. This sampling–cost trade-off can become significant in high-dimensional problems.
On the other hand, DRDM \cite{Cai26b} considers the following minimax optimization problem:
\begin{equation}
\label{eq:drdm_minimax}
\min_{u \in \mathcal{V}}\max_{\rho \in \mathcal{T}}\,\bigl| \mathrm{Loss}_{\mathrm{DRDM}}(u,\rho) \bigr|^2, \qquad \mathrm{Loss}_{\mathrm{DRDM}}(u,\rho) := \int_0^{T-h} \mathbb{E}\Bigl[ \rho(t,\mathbf{X}_t) R(t,\mathbf{X}_t,\xi; u) \Bigr] \, \mathrm{d}t,
\end{equation}
where $\mathcal{V}$ and $\mathcal{T}$ denote candidate and test-function spaces, respectively, $\xi$ is a centered isotropic random vector, and $R(t,x,\xi;u):=(u(t+h, x+\mu h+\sigma\sqrt{h}\xi)-u(t,x))/h-f(t,x,u(t,x))$ is the random-difference residual. While DRDM avoids automatic differentiation of spatial derivatives (e.g., Hessians), its minimax nature introduces adversarial training, which requires fine tuning of learning rates for $u$ and $\rho$ to achieve training stability. Additionally, evaluating the squared objective requires two disjoint minibatches, doubling residual evaluations and sampling overhead.

To circumvent both the nested-expectation sampling bottleneck of DeepMartNet and the adversarial optimization complexities of DRDM, we propose an {\bf i}terative {\bf S}a{\bf m}pling-{\bf a}nd-{\bf R}egression {\bf T}echnique (iSMART) for solving martingale-based PDEs. The core innovation of iSMART lies in finding the solution by an iterative scheme and recasting the evaluation of conditional expectation as a sequence of local least-squares regression problems that can be trained iteratively and efficiently by employing the stop gradient manipulation. This sampling, regression and iteration paradigm ensures efficiency and stability of the proposed iSMART approach, which also avoids adversarial training and nested expectation estimation. We remark that similar idea has been adopted in machine learning community \cite{Song2023ConsistencyModel, He2025MFM, Deng2026Drifting}, and is utilized in free energy sampling via flow matching \cite{liu2026}. One purpose of this paper is to realize this paradigm in the AI for scientific computing community.

The key contributions of this work can be summarized from the following three aspects.
\begin{enumerate}
    \item \textbf{An iteration and regression framework.} We propose iSMART, a martingale-based iterative framework that replaces the explicit evaluation of the conditional expectations by the least-squares regression against pathwise samples. This formulation avoids the min-max optimization and nested-expectation structures encountered in previous SDE-based approaches, which also shares the higher order derivative free advantage. We also conduct a  partial convergence analysis to provide theoretical support for this iterative regression framework.

    \item \textbf{Flexible path-generation strategies for different PDE classes.} iSMART accommodates linear, semi-linear and fully non-linear equation. We design three kinds of path-generation strategies tailored to different classes of PDEs (referred as Method ~($I$), ~($II$) and ~($III$) in Section \ref{sec:Sampling}). For fully non-linear equations, we propose a freezing-and-compensating strategy to improve the efficiency and stability of the iteration. The flexible path generation strategies make iSMART both accurate and efficient, as shown in numerical experiments.

    \item \textbf{Systematic numerical validation across PDE classes.} We assess iSMART on three representative classes of problems: linear reaction--diffusion equations with sharp gradients, semilinear Burgers-type equations with quadratic convective nonlinearities, and fully nonlinear HJB equations arising from stochastic optimal control. We further compare iSMART with DeepMartNet on selected benchmark problems. The numerical results demonstrate favourable accuracy and computational efficiency, confirming the robustness and scalability of iSMART across a broad range of PDE types and dimensions.
\end{enumerate}

The remainder of this paper is organised as follows. Section \ref{Sec:MR} introduces the mathematical formulation of the martingale-based PDEs and the underlying probabilistic representation. Section \ref{Sec:ismart} presents the iSMART, detailing the sampling strategy, the construction of the regression problem, and its connection to existing methods. Section \ref{Sec:convAnal} provides a theoretical analysis of the convergence analysis. Section \ref{Sec:NR} presents numerical experiments on benchmark problems. Section \ref{Sec:Conclusion} concludes the paper with a summary.

%++++++++++++++++++++++++++++++++++++++++++++++++++++++++
\section{Revisiting the Martingale Representation of Solution}
\label{Sec:MR}
In this section, we present the mathematical formulation of our method, beginning with the martingale representation of the solution to \eqref{eq:PDEs} and clarifying its connection with existing martingale-based method DeepMartNet.

We recall that for each $(t,x)\in[0,T]\times\mathbb{R}^{d}$, the process $s\mapsto\mathbf{X}_s$ ($s\in[t,T]$) denotes the forward stochastic process associated with $\mathcal{A}$ starting from $\mathbf{X}_t=x$.  With the standard assumptions ensuring the applicability of Itô’s formula, the solution of Problem \eqref{eq:PDEs} fulfills
\begin{displaymath}
u\big(s,\mathbf{X}_{s}\big)
=u(t,x)
+\int_{t}^{s}f\Big(\tau,\mathbf{X}_\tau,u(\tau,\mathbf{X}_\tau),
\nabla_x\,u(\tau,\mathbf{X}_\tau)\Big)\,\mathrm{d}\tau
+\int_{t}^{s}\Big(\nabla_xu(\tau,\mathbf{X}_\tau)\Big)^{\top}
\sigma\Big(\tau,\mathbf{X}_\tau,u(\tau,\mathbf{X}_\tau)\Big)\,\mathrm{d}\mathbf{B}_\tau.
\end{displaymath}
Since the It\^{o} integral is a local martingale (see, e.g., \cite{Pardoux92,Ma94,Cai2026a,Cai26b}), the process defined by
\begin{displaymath}
\mathfrak{M}^{t,x}_s:=u\big(s,\mathbf{X}_{s}\big)-u(t,x)
-\int_{t}^{s}f\Big(\tau,\mathbf{X}_\tau,u(\tau,\mathbf{X}_\tau),
\nabla_x\,u(\tau,\mathbf{X}_\tau)\Big)\,\mathrm{d}\tau,\quad s\in[t,T],
\end{displaymath}
is a local martingale with respect to the natural filtration $\{\mathcal{F}_s\}_{s\geq t}$. Under the standard integrability condition
\begin{displaymath}
\mathbb{E}^{t,x}\left[\int_t^T\left|
\sigma^{\top}\Big(\tau,\mathbf{X}_\tau,u(\tau,\mathbf{X}_\tau)\Big)
\nabla_xu(\tau,\mathbf{X}_\tau)\right|^2\mathrm{d}\tau\right]<\infty,
\end{displaymath}
the stochastic integral is a square-integrable martingale. Hence, for any step size $h$ satisfying $0<h\leq T-t$, the increment
\begin{displaymath}
\mathfrak{M}^{t,x}_{t+h}-\mathfrak{M}^{t,x}_{t}
=u\big(t+h,\mathbf{X}_{t+h}\big)-u\big(t,\mathbf{X}_{t}\big)
  -\int_{t}^{t+h}f\Big(\tau,\mathbf{X}_{\tau},u(\tau,\mathbf{X}_{\tau}),\nabla_x u(\tau,\mathbf{X}_\tau)\Big)\,\mathrm{d}\tau
\end{displaymath}
has zero expectation. Using the fact that $\mathbf{X}_t=x$, the martingale property gives
\begin{equation}\label{eq:martingal}
\mathbb{E}^{t,x}\big[\mathfrak{M}^{t,x}_{t+h}-\mathfrak{M}^{t,x}_{t}\big]
=\mathbb{E}^{t,x}\left[u\big(t+h,\mathbf{X}_{t+h}\big)-u\big(t,x\big)
 -\int_{t}^{t+h}f\Big(\tau,\mathbf{X}_{\tau},u(\tau,\mathbf{X}_{\tau}),\nabla_x u(\tau,\mathbf{X}_\tau)\Big)\,\mathrm{d}\tau\right]
=0.
\end{equation}
Then we get the martingale based representation of solution $u(t,x)$ that for any $0<h\leq T-t$,
\begin{equation}\label{eq:martingalSolv}
u(t,x)=\mathbb{E}^{t,x}\left[u\big(t+h,\mathbf{X}_{t+h}\big)
-\int_{t}^{t+h}f\Big(\tau,\mathbf{X}_{\tau},u(\tau,\mathbf{X}_{\tau}),\nabla_x u(\tau,\mathbf{X}_\tau)\Big)\,\mathrm{d}\tau\right].
\end{equation}
In the special case where $T=t+h$ and Equation \eqref{eq:PDEs} is linear, Equation \eqref{eq:martingalSolv} reduces to the classical Feynman-Kac formula.

As a consequence, for given $(t,x)$, $u(t,x)$ is the minimizer of an optimization problem
\begin{displaymath}
\min_{u} \left|u(t,x)-\mathbb{E}^{t,x}\left[u\big(t+h,\mathbf{X}_{t+h}\big)
+\int_{t}^{t+h}f\Big(\tau,\mathbf{X}_{\tau},u(\tau,\mathbf{X}_{\tau}),\nabla_x u(\tau,\mathbf{X}_\tau)\Big)\,\mathrm{d}\tau\right]\right|^2.
\end{displaymath}
Hence, the solution $u(t,x)$ of Equation \eqref{eq:PDEs} can be found by solving the following minimization problem
\begin{equation}\label{eq:LosDeepMartNet}
\begin{aligned}
\min_{u}\
& \mathbb{E}_{(t,x)\sim P}\left|u(t,x)-\mathbb{E}^{t,x}\left[u\big(t+h,\mathbf{X}_{t+h}\big)
-\int_{t}^{t+h}f\Big(\tau,\mathbf{X}_{\tau},u\big(\tau,\mathbf{X}_{\tau}\big),\nabla_x\, u(\tau,\mathbf{X}_\tau)\Big)\,\mathrm{d}\tau\right]\right|^{2}\\
&=\int_{0}^{T}\int_{\mathbb{R}^d}\left|u(t,x)-\mathbb{E}^{t,x}\left[u\big(t+h,\mathbf{X}_{t+h}\big)
-\int_{t}^{t+h}f\Big(\tau,\mathbf{X}_{\tau},u\big(\tau,\mathbf{X}_{\tau}\big),\nabla_x\, u(\tau,\mathbf{X}_\tau)\Big)\,\mathrm{d}\tau\right]\right|^{2}
P(t,x)\mathrm{d}t\,\mathrm{d}x,
\end{aligned}
\end{equation}
where $P(t,x)$ denotes the probability used for sampling $(t,x)$ and will be discussed in Sec.~\ref{sec:Sampling}. The minimization problem \eqref{eq:LosDeepMartNet} serves as the starting point of DeepMartNet and the proposed iSMART method.

It is worth noting that the problem \eqref{eq:LosDeepMartNet} exhibits a nested (or double) expectation structure. An outer expectation over the spatio-temporal sampling w.r.t. $P(t,x)$, and an inner conditional expectation $\mathbb{E}^{t,x}[\cdot]$ for a given $(t,x)$. Evaluating such double expectations is inherently computationally expensive and hence limit the algorithm's scalability in high dimensions. To address this issue, we will turn the evaluation of conditional expectation into a least square problem and raise the iSMART method.

\section{The iSMART: An Iterative Sampling-and-Regression Technique}
\label{Sec:ismart}
In this section, we present the theoretical basis of the proposed iSMART framework and outline its implementation framework.

\subsection{Theoretical Basis: An Equivalence Proposition}
The following elementary proposition establishes the equivalence between the conditional expectation of a random variable and its $L^2$-projection onto the space of functions measurable with respect to the current state, which is also well-known in probability theory \cite{ELV2019}. This result justifies replacing the conditional expectation in the martingale representation by a regression objective, and forms the theoretical basis of the iSMART procedure that we now introduce.

\begin{proposition}\label{lem:eqLem}
Let $(\Omega, \mathcal{F}, \mathbb{P})$ be a probability space, let $\mathcal{F}_t\subset\mathcal{F}$ be a sub-$\sigma$-algebra,
$\xi\in L^2(\Omega,\mathcal{F},\mathbb{P})$ be a real-valued square-integrable random variable. Then for any $\mathcal{F}_t$-measurable  and square-integrable function $\varphi$, define the following two loss functionals
\begin{displaymath}
\mathcal{J}_1(\varphi):=\mathbb{E}\Big[\bigl|\varphi-\mathbb{E}\big[\xi\mid\mathcal{F}_t\big]\bigr|^2\Big]
\quad{\rm and}\quad
\mathcal{J}_2(\varphi):=\mathbb{E}\Big[\bigl|\varphi-\xi\bigr|^2\Big].
\end{displaymath}
Then the sets of minimisers of $\mathcal{J}_1(\varphi)$ and $\mathcal{J}_2(\varphi)$ over $\varphi\in L^2(\mathcal{F}_t)$ coincide, i.e.,
\begin{equation}
\operatorname*{arg\,min}_{\varphi\in L^2(\mathcal{F}_t)}\mathcal{J}_1(\varphi)
= \operatorname*{arg\,min}_{\varphi\in L^2(\mathcal{F}_t)}\mathcal{J}_2(\varphi),
\end{equation}
with equality understood up to almost sure equivalence. Moreover, the two functionals differ only by a constant that does not depend on $\varphi$, namely, the expected conditional variance of $\xi$:
\begin{equation}
\mathcal{J}_2(\varphi)-\mathcal{J}_1(\varphi)=\mathbb{E}\Big[\operatorname{Var}\big(\xi\mid\mathcal{F}_t\big)\,\Big].
\end{equation}
\end{proposition}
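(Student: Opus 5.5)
The plan is to prove the identity $\mathcal{J}_2(\varphi)-\mathcal{J}_1(\varphi)=\mathbb{E}[\operatorname{Var}(\xi\mid\mathcal{F}_t)]$ first. Once that is in hand, the coincidence of the minimiser sets follows at once: two functionals that differ by a finite constant independent of $\varphi$ have the same set of minimisers.

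Set $\eta:=\mathbb{E}[\xi\mid\mathcal{F}_t]$. Since $\xi\in L^2$, conditional Jensen gives $\eta\in L^2(\mathcal{F}_t)$. Write
\begin{displaymath}
\varphi-\xi=(\varphi-\eta)+(\eta-\xi),
\end{displaymath}
and expand the square:
\begin{displaymath}
\mathcal{J}_2(\varphi)=\mathcal{J}_1(\varphi)+2\,\mathbb{E}\bigl[(\varphi-\eta)(\eta-\xi)\bigr]+\mathbb{E}\bigl[(\eta-\xi)^2\bigr].
\end{displaymath}
All three terms are finite, because $\varphi,\eta,\xi\in L^2$ and Cauchy--Schwarz applies.

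The step that carries the content is the vanishing of the cross term. The factor $\varphi-\eta$ is $\mathcal{F}_t$-measurable and square-integrable, and the product is integrable. By the tower property and "taking out what is known",
\begin{displaymath}
\mathbb{E}\bigl[(\varphi-\eta)(\eta-\xi)\bigr]=\mathbb{E}\bigl[(\varphi-\eta)\,\mathbb{E}[\eta-\xi\mid\mathcal{F}_t]\bigr]=0.
\end{displaymath}
The only subtlety is justifying the pull-out for a product of two $L^2$ variables rather than a bounded one. I would handle it by noting that the product is in $L^1$ by Cauchy--Schwarz, which is enough for the standard pull-out property. Alternatively, one can truncate $\varphi-\eta$ at level $n$ and pass to the limit by dominated convergence.

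For the remaining term, $\mathbb{E}[(\xi-\eta)^2]=\mathbb{E}\bigl[\mathbb{E}[(\xi-\eta)^2\mid\mathcal{F}_t]\bigr]=\mathbb{E}[\operatorname{Var}(\xi\mid\mathcal{F}_t)]$. This is finite and does not depend on $\varphi$, which proves the identity.

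For the minimisers, $\mathcal{J}_1\ge 0$ with equality exactly when $\varphi=\eta$ almost surely, and $\eta$ is admissible. So both argmin sets equal the a.s.-equivalence class of $\mathbb{E}[\xi\mid\mathcal{F}_t]$.
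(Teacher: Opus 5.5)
Your proposal is correct and takes essentially the same route as the paper: the orthogonal decomposition $\varphi-\xi=(\varphi-\mathbb{E}[\xi\mid\mathcal{F}_t])+(\mathbb{E}[\xi\mid\mathcal{F}_t]-\xi)$, the tower-property argument that kills the cross term, and identification of the remaining term as $\mathbb{E}[\operatorname{Var}(\xi\mid\mathcal{F}_t)]$. Your explicit identification of both argmin sets with the a.s.-class of $\mathbb{E}[\xi\mid\mathcal{F}_t]$ (via $\mathcal{J}_1\ge 0$) is a small, welcome sharpening of the paper's closing sentence.
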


\begin{proof}
We adopt the standard orthogonal projection argument. Set $m_t:=\mathbb{E}[\xi\mid\mathcal{F}_t]$. Since $\xi\in L^2(\Omega,\mathcal{F},\mathbb{P})$, its conditional expectation $m_t\in L^2(\mathcal{F}_t)$ is well-defined. For any $\varphi\in L^2(\mathcal{F}_t)$, we decompose the residual as $\varphi-\xi=(\varphi-m_t)+(m_t-\xi)$. Expanding the squared norm gives $|\varphi-\xi|^2=|\varphi-m_t|^2+|m_t-\xi|^2+2(\varphi-m_t)(m_t-\xi)$. Taking expectations and using the definition of $\mathcal{J}_1$ and $\mathcal{J}_2$, we obtain
\begin{displaymath}
\mathcal{J}_2(\varphi)=\mathcal{J}_1(\varphi)+\mathbb{E}\left[|m_t-\xi|^2\right]+2\,\mathbb{E}\left[\big(\varphi-m_t\big)\,\big(m_t-\xi\big)\right].
\end{displaymath}
The cross term vanishes. Indeed, since $\varphi-m_t$ is $\mathcal{F}_t$-measurable and belongs to $L^2(\mathcal{F}_t)$, while $m_t-\xi\in L^{2}$, the product $(\varphi-m_t)(m_t-\xi)$ is integrable, and $\mathbb{E}\left[(\varphi-m_t)(m_t-\xi)\right]=
\mathbb{E}\left[(\varphi-m_t)\mathbb{E}[m_t-\xi\mid\mathcal{F}_t]\right]=0$, due to $\mathbb{E}[m_t-\xi\mid\mathcal{F}_t]=m_t-\mathbb{E}[\xi\mid\mathcal{F}_t]=0$. Consequently,
\begin{displaymath}
\mathcal{J}_2(\varphi)=\mathcal{J}_1(\varphi)+\mathbb{E}\left[|\xi-m_t|^2\right].
\end{displaymath}
Finally, $\mathbb{E}\left[|\xi-m_t|^2\right]
=\mathbb{E}\left[\mathbb{E}[|\xi-\mathbb{E}\left[\xi\mid\mathcal{F}_t\right]|^2\mid\mathcal{F}_t]\right]=\mathbb{E}\left[\operatorname{Var}(\xi\mid\mathcal{F}_t)\right]$,
which is independent of $\varphi$. Thus, $\mathcal{J}_1$ and $\mathcal{J}_2$ differ only by an additive constant over $L^2(\mathcal{F}_t)$, so they share the same minimisers.
\end{proof}

The key algorithmic implication of Proposition~\ref{lem:eqLem} is that explicit conditional expectation $\mathbb{E}[\xi\mid\mathcal{F}_t]$ can be avoided. In high dimensions, direct computation of $\mathbb{E}[\xi\mid\mathcal{F}_t]$ at every state requires multi-path or branching simulations, which quickly becomes prohibitive. Since $\mathcal{J}_1$ and $\mathcal{J}_2$ share identical minimizers, the proposition guarantees that regression against raw single-path samples asymptotically recovers the conditional expectation. This equivalence, which replaces expensive multi-path evaluations with single-path simulations, serves as the theoretical cornerstone of iSMART.

%++++++++++++++++++++++++++++++++++++++++++++++++++++++++
\subsection{The iSMART: A Novel Iterative Sampling-and-Regression Approach Incorporating DNN}
We now present the iSMART approach, which builds upon the local martingale representation in \eqref{eq:martingalSolv}. For any $(t,x)\in[0,T)\times\mathbb{R}^d$, let $\mathbf{X}_{s}$ ($s\in[t,T]$) be the forward process generated by $\mathcal{A}$ with $\mathbf{X}_{t}=x$. Then, for any $s\in[0, T-t]$
\begin{equation}\label{eq:iSMART_mr}
u(t,x)=\mathbb{E}^{t,x}\left[u\big(t+s,\mathbf{X}_{t+s}\big)
-\int_t^{t+s}f\Big(\tau,\mathbf{X}_\tau,u(\tau,\mathbf{X}_\tau),\nabla_x u(\tau,\mathbf{X}_\tau)\Big)\,\mathrm{d}\tau\right].
\end{equation}
Equation \eqref{eq:iSMART_mr} formally characterises $u$ as a fixed point of the conditional-expectation operator on its right-hand side. This suggests a iterative scheme where, given an approximation $u_n$, we define an updated target by substituting $u_n$ into the right-hand side of \eqref{eq:iSMART_mr}. Specifically, let
\begin{equation}\label{eq:iSMART_xi}
\mathbf{\xi}^{t,x}_n:=u_n\big(t+s,\mathbf{X}_{t+s}\big)
-\int_{t}^{t+s}f\Big(\tau,\mathbf{X}_\tau,u_n(\tau,\mathbf{X}_\tau),\nabla_x u_n(\tau,\mathbf{X}_\tau)\Big)\,\mathrm{d}\tau.
\end{equation}
The corresponding deterministic target is
\begin{equation}\label{eq:iSMART-lt}
v_n(t,x):=\mathbb{E}^{t,x}\Big[\mathbf{\xi}^{t,x}_n\Big].
\end{equation}
If $u_n=u$, then \eqref{eq:iSMART_mr} gives $v_n(t,x)=u(t,x)$, confirming the exact solution as a fixed point.

The conditional expectation in \eqref{eq:iSMART-lt} is a well-defined function of $(t,x)$, but its direct evaluation is infeasible in high dimensions. The integral over the SDE transition density might be intractable, and a naive Monte Carlo approximation may suffer from nested expectation evaluation. To circumvent these issues, we avoid evaluating the conditional expectation explicitly. Instead, we invoke the $L^2$-projection property of conditional expectation from Proposition~\ref{lem:eqLem}, which states that for a fixed random variable $\xi$, the minimizer of the regression loss
$$\min_{\varphi\in L^2(\mathcal{F}_t)} \mathbb{E}\left[|\varphi-\xi|^2\right]$$
is exactly $\mathbb{E}[\xi\mid\mathcal{F}_t]$. Applying this with $\varphi=v(t,x)$ and $\xi=\xi_n^{t,x}$ yields
\begin{equation}
\operatorname*{arg\,min}_{v(t,x)}\mathbb{E}^{t,x}\left[\left|v(t,x)-\xi_n^{t,x}\right|^2\right]
=\mathbb{E}^{t,x}\left[\xi_n^{t,x}\right]
=v_n(t,x).
\end{equation}
Thus, although $\mathbb{E}^{t,x}[\xi_n^{t,x}]$ cannot be evaluated directly, it can be recovered as the solution of a regression problem involving only sample trajectories. This equivalence forms the theoretical core of iSMART, i.e., the local target is obtained not by computing an expectation, but by solving a least-squares problem whose minimiser is precisely that expectation.

%This observation motivates the iSMART update. Let $P(\mathrm{d}t,\mathrm{d}x)$ be a sampling distribution over the admissible spatio-temporal domain $\mathcal{D}_s:=\left\{(t,x):0\leq t\leq T-s,\ x\in\mathbb{R}^d\right\}$, and let $\mathcal{N}_{\Theta}:=\{u_{\theta}:\theta\in\Theta\}$ be a parametric family of neural networks. The next iterate is obtained by projecting the local target $v_n$ onto $\mathcal{N}_{\Theta}$ in the weighted $L^2(P)$ sense
%\begin{equation}\label{eq:iSMART_pro}
%\theta_{n+1}\in\operatorname*{argmin}_{\theta\in\Theta}
%\int_{\mathcal{D}_s}\big|u_{\theta}(t,x)-v_n(t,x)\big|^2P(\mathrm{d}t,\mathrm{d}x),\qquad
%u_{n+1}:=u_{\theta_{n+1}}.
%\end{equation}
%Since $v_n(t,x)=\mathbb{E}^{t,x}[\xi_n^{t,x}]$ is not explicitly computable, Proposition~\ref{lem:eqLem} further implies that \eqref{eq:iSMART_pro} is equivalent, up to a constant independent of $\theta$, to the sample based regression objective
%\begin{equation}\label{eq:iSMART_sl}
%\theta_{n+1}
%\in\operatorname*{argmin}_{\theta\in\Theta}\mathbb{E}_{(t,x)\sim P}\mathbb{E}^{t,x}
%\Bigl[\bigl|u_{\theta}(t,x)-\xi_n^{t,x}\bigr|^2\Bigr].
%\end{equation}
%Consequently, the neural network can be trained directly using the simulated pathwise targets $\xi_n^{t,x}$ without explicitly evaluating the conditional expectation $v_n$.

This observation motivates the iSMART update.  The complete iSMART procedure is summarised as follows.

\begin{description}[labelindent=2.5em]
  \item[Step 1: Iterative scheme via SDE sampling.]
      For any $(t,x)\in [0,T)\times\mathbb{R}^d$ and a fixed $0<s< T-t$, define a iteration sequence by
      \begin{equation}\label{eq:iSMARTLo}
       \left\{\begin{aligned}
        &u_{n+1}(t,x)=\mathbb{E}^{t,x}\left[u_n\left(t+s,\mathbf{X}_{t+s}\right)-\int_{t}^{t+s}f\Big(\tau,\mathbf{X}_{\tau},u_n(\tau,\mathbf{X}_{\tau}),\nabla_x u_n(\tau,\mathbf{X}_\tau)\Big)\mathrm{d}\tau\right], \\
        & s.t.\quad u_{n+1}(T,x) = g(x),
       \end{aligned}\right.
      \end{equation}
      where $u_0(t,x)$ is a properly chosen initial state, and the stochastic trajectory $\mathbf{X}_\tau$ ($\tau\geq t$) is generated via the SDE in Eq.~\eqref{eq:Xn}, with the sampling procedure described in Sec.~\ref{sec:Sampling}:
      \begin{equation}\label{eq:Xn}
        \mathrm{d}\mathbf{X}_\tau
          =\mu\Big(\tau,\mathbf{X}_\tau,u_n(\tau,\mathbf{X}_\tau),\nabla_xu_n(\tau,\mathbf{X}_\tau)\Big)\,\mathrm{d}\tau
          +\sigma\Big(\tau,\mathbf{X}_\tau,u_n(\tau,\mathbf{X}_\tau)\Big)\,\mathrm{d}\mathbf{B}_\tau,\qquad \mathbf{X}_t=x.
      \end{equation}
      For $s=T-t$, applying the terminal condition simplifies iteration \eqref{eq:iSMARTLo} to
      \begin{equation}\label{eq:picard1}
        \left\{\begin{aligned}
         &u_{n+1}(t,x)=\mathbb{E}^{t,x}\left[g(\mathbf{X}_T)-\int_{t}^{T}f\Big(\tau,\mathbf{X}_{\tau},u_n(\tau,\mathbf{X}_{\tau}),\nabla_x u_n(\tau,\mathbf{X}_\tau)\Big)\mathrm{d}\tau\right], \\
         & s.t.\quad u_{n+1}(T,x) = g(x),
        \end{aligned}\right.
      \end{equation}
     %The solution to \eqref{eq:PDEs}, characterized by \eqref{eq:martingalSolv}, can be viewed as the fixed point of iterations \eqref{eq:iSMARTLo} and \eqref{eq:picard1}.
     The convergence of this iterative scheme is discussed in Section~\ref{Sec:convAnal}; hereafter, convergence is assumed.
  \item[Step 2: Global Regression Update.]
      For given $u_n(t,x)$, we update the global approximation $u_{n+1}$ by solving the weighted $L^2$ regression problem
      \begin{equation}\label{eq:iSMART_opt}
        u_{n+1}(t,x)=\operatorname*{arg\,min}_{v(t,x)}\mathbb{E}_{(t,x)\sim P}\mathbb{E}^{t,x}\left[\Big|v(t,x)-u_n(t+s,\mathbf{X}_{t+s})+\int_{t}^{t+s}f(\tau,\mathbf{X}_\tau,u_n(\tau,\mathbf{X}_\tau),\nabla_x u_n(\tau,\mathbf{X}_\tau))\mathrm{d}\tau\Big|^2\right].
      \end{equation}
      Since \eqref{eq:iSMART_opt} holds for any $0<s\leq T-t$, we can replace the first expectation in \eqref{eq:iSMART_opt} by $\mathbb{E}_{(t,x,s)\sim \tilde{P}}$ and update $u_{n+1}$ by
      \begin{equation}\label{eq:updateun1}
       u_{n+1}(t,x)=\operatorname*{arg\,min}_{v(t,x)} \mathbb{E}_{(t,x,s)\sim \tilde{P}}\mathbb{E}^{t,x}\left[\Big|v(t,x)-u_n(t+s,\mathbf{X}_{t+s})+\int_{t}^{t+s}f(\tau,\mathbf{X}_\tau, u_n(\tau,\mathbf{X}_\tau),\nabla_x u_n(\tau,\mathbf{X}_\tau))\mathrm{d}\tau\Big|^2\right],
      \end{equation}
      where $\tilde{P}(t,x,s)$ is the probability to sample $(t,x,s)$. Typically, we can set $\tilde{P}(t,x,s) = P_t(t)P_x(x)P_s(s|t)$. The choice of each probability will be specified in the following subsection.
  \item[Step 3: Iterative Termination.]
      Repeat Steps 1 and 2 sequentially until the convergence of the sequence $\{u_n\}_{n\in\mathbb{N}}$ is achieved.
\end{description}

The above procedure can be implemented efficiently using a DNN with the stop-gradient technique. Let $u_\theta(t,x)$ be a DNN parameterized by $\theta$. Given parameters $\theta_n$ and fixed $(t,x,s)$, we update the parameters by solving the mean least-squares problem:
\begin{equation}\label{eq:semiloss}
  \begin{aligned}
    \theta_{n+1} &= \operatorname*{arg\,min}_{\theta}\mathbb{E}_{(t,x,s)\sim \widetilde P}\mathbb{E}^{t,x}\left[\Big|u_{\theta}(t,x) - u_{\theta_n}\big(t+s,\mathbf{X}_{t+s}\big)+\int_{t}^{t+s}f\Big(\tau,\mathbf{X}_\tau,u_{\theta_n}\big(\tau,\mathbf{X}_\tau\big),\nabla_x u_{\theta_n}\big(\tau,\mathbf{X}_\tau\big)\Big)\,\mathrm{d}\tau\Big|^2\right] \\
    &\approx \operatorname*{arg\,min}_{\theta}\frac{1}{N M}\sum_{i=1}^{N}\sum_{m=1}^{M}\left|u_{\theta}(t_i,x_i)-u_{\theta_n}\big(t_i+s_i,\mathbf{X}^{(m,i)}_{t_i+s_i}\big)+\int_{t_i}^{t_i+s_i}f\Big(\tau,\mathbf{X}^{(m,i)}_\tau,u_{\theta_n}\big(\tau,\mathbf{X}^{(m,i)}_\tau\big),\nabla_x u_{\theta_n}\big(\tau,\mathbf{X}^{(m,i)}_\tau\big)\Big)\,\mathrm{d}\tau\right|^2,
  \end{aligned}
\end{equation}
where $\{(t_i,x_i,s_i)\}_{i=1}^N$ are $N$ i.i.d. samples drawn from $\widetilde{P}$. For each $(t_i,x_i)$, $\mathbf{X}^{m,i}$ ($m=1,\dots,M$) are $M$ i.i.d. trajectories generated by
\begin{equation}\label{eq:Xmn}
\mathrm{d}\mathbf{X}^{(m,i)}_\tau = \mu\Big(\tau,\mathbf{X}^{(m,i)}_\tau, u_{\theta_n}\big(\tau,\mathbf{X}^{(m,i)}_\tau\big), \nabla_x u_{\theta_n}\big(\tau,\mathbf{X}^{(m,i)}_\tau\big)\Big)\,\mathrm{d}\tau+\sigma\Big(\tau,\mathbf{X}^{(m,i)}_\tau, u_{\theta_n}\big(\tau,\mathbf{X}^{(m,i)}_\tau\big)\Big)\,\mathrm{d}\mathbf{B}^{(m)}_\tau, \qquad \mathbf{X}^{(m,i)}_{t_i}=x_i,
\end{equation}
with $\mathbf{B}_\tau^{(m)}$ being $M$ independent standard Brownian motions. The stop-gradient technique allows efficient, unrolled implementation of this procedure. Using the operator $\mathfrak{Sg}[\cdot]$ to denote the stop-gradient operation, the update \eqref{eq:semiloss} can be written compactly expressed as
\begin{equation}\label{eq:semiloss1}
    \theta\gets \operatorname*{arg\,min}_{\theta}\frac{1}{NM}\sum_{i=1}^N\sum_{m=1}^{M}\left|u_{\theta}(t_i,x_i)-\mathfrak{Sg}\left[u_{\theta}(t_i+s_i,\mathbf{X}^{(m,i)}_{t_i+s_i})-\int_{t_i}^{t_i+s_i}f(\tau,\mathbf{X}^{(m,i)}_\tau, u_{\theta}(\tau,\mathbf{X}^{(m,i)}_\tau),\nabla_x u_{\theta}(\tau,\mathbf{X}^{(m,i)}_\tau))\,\mathrm{d}\tau\right]\right|^2.
\end{equation}

The overall procedure consists of two decoupled stages per iteration. First, generate stochastic trajectories according to \eqref{eq:Xmn} and compute the pathwise targets, namely
\begin{equation}\label{eq:iSMART_xi}
\xi_{m,i} := u_{\theta}\Big(t_i+s_i,\mathbf{X}^{(m,i)}_{t_i+s_i}\Big)-\int_{t_i}^{t_i+s_i}f\Big(\tau,\mathbf{X}^{(m,i)}_\tau,u_{\theta}(\tau,\mathbf{X}^{(m,i)}_\tau),\nabla_x u_{\theta}(\tau,\mathbf{X}^{(m,i)}_\tau)\Big)\,\mathrm{d}\tau.
\end{equation}
Second, fit the generated targets by minimizing the following universal loss function
\begin{equation}\label{eq:loss}
\mathcal{L}_n(\theta):=\frac{1}{NM}\sum_{i=1}^N\sum_{m=1}^{M} \left|u_{\theta}(t_i,x_i)-\mathfrak{Sg}[\xi_{m,i}]\right|^2,
\end{equation}
so that the parameter update is given by
\begin{equation}\label{eq:update}
\theta_{n+1} \gets \operatorname*{arg\,min}_{\theta}\, \mathcal{L}_n(\theta).
\end{equation}
These two stages are executed iteratively until a specified stopping criterion is met.

From a computational perspective, the first stage involves only forward evaluations of the SDE and the current DNN, requiring no computational graph construction for automatic differentiation, which renders its computational overhead negligible compared to network training. In the second stage, the stop-gradient operation cuts off backward propagation through the target $\xi_{m,i}$, reducing the sub-problem \eqref{eq:loss} to a standard data-fitting task compatible with modern deep learning optimizers (e.g., Adam). In practice, we do not solve \eqref{eq:update} to exact convergence; instead, we optimize $\mathcal{L}_n(\theta)$ for a fixed number of gradient descent steps before regenerating new sampling trajectories via \eqref{eq:Xmn} and \eqref{eq:iSMART_xi}.

The stochastic trajectories \eqref{eq:Xmn} are discretized using the Euler-Maruyama scheme, and the time integral in \eqref{eq:iSMART_xi} is approximated via a quadrature rule (e.g., the trapezoidal rule). Additional sampling strategies and the selection of the proposal distribution $\widetilde P(t,x,s)$ are detailed in the following subsection.

The complete iSMART iteration is formally summarised in Algorithm~\ref{alg:ismart}.

\begin{algorithm}[htbp]
\caption{iSMART: A Martingale-Based PDE Solver}
\label{alg:ismart}
\begin{algorithmic}[1]
\Require Proposal distribution $\widetilde{P}(t,x,s)$ over domain $\mathcal{D}_{\widetilde{P}} = \{(t,x,s) \mid t \in [0,T), x \in \Omega, s \in (0, T-t]\}$
\Require Parameterized neural network $u_\theta(t,x)$ with initial parameters $\theta_0$
\Require Mini-batch size $N$, number of Monte Carlo paths $M$, max outer iterations $\mathrm{ITER_{max}}$, inner steps $K$
\For{$n = 0$ \textbf{to} $\mathrm{ITER_{max}}-1$}
    \State Sample a mini-batch of spatiotemporal points $\{(t_i, x_i, s_i)\}_{i=1}^N \sim \widetilde{P}$
    \For{$m = 1$ \textbf{to} $M$}
        \State Simulate SDE trajectory $\mathbf{X}^{(m,i)}_\tau$ over $\tau \in [t_i, t_i+s_i]$ initialized at $\mathbf{X}^{(m,i)}_{t_i} = x_i$ via \eqref{eq:Xmn}
        \State Compute pathwise target $\xi_{m,i}$ via \eqref{eq:iSMART_xi} (substituting $u(T,\mathbf{X}_T)=g(\mathbf{X}_T)$ if $t_i+s_i=T$)
        \State Detach target from computational graph: $\xi_{m,i} \gets \mathfrak{Sg}[\xi_{m,i}]$
    \EndFor
    \For{$k = 1$ \textbf{to} $K$}
        \State Compute mean squared loss: $\mathcal{L}(\theta) = \frac{1}{NM}\sum_{i=1}^{N}\sum_{m=1}^{M}\left|u_{\theta}(t_i,x_i) - \xi_{m,i}\right|^2$
        \State Update parameters $\theta$ by optimizing $\mathcal{L}(\theta)$ for one gradient step (e.g., via Adam)
    \EndFor
\EndFor
\Ensure Converged parameters $\theta^*$ and corresponding approximate solution $u_{\theta^*}(t,x)$
\end{algorithmic}
\end{algorithm}

\subsection{Sampling and Path Generation Strategies}
\label{sec:Sampling}

We now discuss the selection of the proposal sampling distribution $\widetilde{P}(t,x,s)$ and the trajectory generation strategies for various PDE settings.

Typically, it is computationally advantageous to sample the temporal variable $t$ and spatial variable $x$ independently, while the lookahead time interval $s$ is sampled conditional on $t$. For the time variable $t \in [0,T]$, although uniform sampling over $[0,T]$ is a standard choice, we design a dynamic Beta distribution sampling strategy. During the training process, $t$ is drawn according to
\begin{equation}\label{eq:temporalsample}
t\sim T\cdot\mathrm{Beta}(\alpha,1),\qquad\alpha=\max\big\{1,\,10\,(1-\mathrm{epoch}/N)\big\},
\end{equation}
where $N$ denotes the total number of training iterations. This strategy follows the way how the information propagating. Initially $\alpha=10$, which strongly biases samples toward the terminal time and reinforces the numerical solution $u_\theta$ to be accurate near the terminal time. As training proceeds, $\alpha\to1$ and the distribution becomes uniform, allowing the learned part of solution propagates gradually to the entire time interval.

For spatial sampling, uniform sampling in high dimensions inherently concentrates samples near the boundary of the domain due to the concentration of measure. To ensure effective spatial coverage, we introduce a multi-layer Gaussian sampling strategy supplemented by uniform boundary regularization. The spatial coordinate $x$ is sampled from the mixture density
\begin{equation}\label{eq:spatialsample}
p_x(x) = \sum_{k=1}^K w_k\,\mathcal{N}\big(x; 0, \sigma_k^2 I_d\big) + w_0\,\mathcal{U}\big(x; [-R, R]^d\big),
\end{equation}
where $w_k\ge0$ be given. In high spatial dimensions ($d \gg 1$), a Gaussian vector $x \sim \mathcal{N}(0, \sigma_k^2 I_d)$ concentrates heavily near a hypersphere of radius approximately $\sqrt{d}\,\sigma_k$. By appropriately selecting the bandwidth parameters $\{\sigma_k\}_{k=1}^K$, the generated samples cover nested spherical shells across the domain of interest. The uniform component $\mathcal{U}([-R,R]^d)$ serves as a global spatial regularizer. The time discretization is implemented using an accurate numerical scheme, while the spatial sampling is illustrated schematically in Figure~\ref{fig:sampling}.

\begin{figure}[!htbp]
 \centering
 \includegraphics[width=0.86\textwidth]{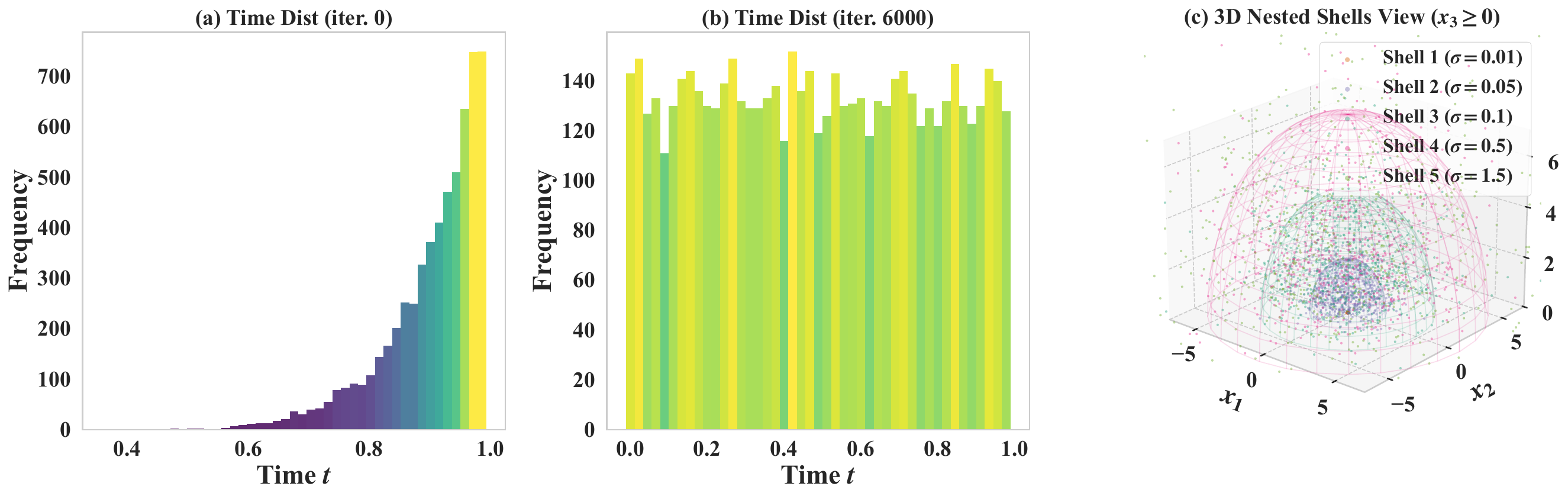}
\caption{Visualization of the spatiotemporal sampling scheme. Subfigures (a) and (b) illustrate the temporal sampling profile at epoch~$0$ and the final epoch, respectively. Subfigure (c) presents a 3D schematic illustration of spatial samples generated by the multi-scale Gaussian strategy \eqref{eq:spatialsample}.}
\label{fig:sampling}
\end{figure}

We next elaborate on path generation strategies and choices for the conditional lookahead distribution $P_s(s \mid t)$. To adapt to different structural characteristics of equation \eqref{eq:PDEs}, we categorize three tailored strategies.

\smallskip\vspace{0.15cm}
\noindent\textbf{Method ($I$): Brownian trajectory sampling for constant diffusion.}
When the diffusion coefficient $\sigma(t,x,u) \equiv \sigma$ is constant, Equation \eqref{eq:PDEs} can be rewritten as
\begin{equation}
\partial_t u + \frac{\sigma^2}{2}\Delta_x u = \tilde{f}(t,x,u,\nabla_x u) := f(t,x,u,\nabla_x u) - \mu(t,x,u,\nabla_x u)^\top \nabla_x u,
\end{equation}
where $\mathcal{A} = \frac{\sigma^2}{2}\Delta$ acts as the infinitesimal generator, and $\tilde{f}$ is treated as a pseudo-source term. The corresponding stochastic process satisfies $\mathrm{d}\mathbf{X}_\tau = \sigma \mathrm{d}\mathbf{B}_\tau$, implying that $\mathbf{X}_s \sim \mathcal{N}(x, \sigma^2 (s-t) I_d)$ for any $s > t$. Since exact transition densities are available, we choose a fixed lookahead time step $P_s(s \mid t) = \delta(s - s^*)$ with $s^* = \min\{s_0, T-t\}$ for a constant $s_0 > 0$. To achieve second-order temporal accuracy $\mathcal{O}(\Delta\tau^2)$ for the pathwise integral in \eqref{eq:iSMART_xi}, we evaluate the integral via the composite trapezoidal quadrature rule over $N_\mathrm{Int}$ sub-intervals with step size $\Delta\tau = s/N_\mathrm{Int}$:
\begin{equation}\label{eq:integral_trapezoidal}
\int_{t}^{t+s} \tilde{f}\Big(\tau, \mathbf{X}_\tau, u(\tau, \mathbf{X}_\tau), \nabla_x u(\tau, \mathbf{X}_\tau)\Big) \,\mathrm{d}\tau
\approx \Delta\tau \left[ \frac{1}{2}\tilde{f}_0 + \sum_{j=1}^{N_\mathrm{Int}-1} \tilde{f}_j + \frac{1}{2}\tilde{f}_{N_\mathrm{Int}} \right],
\end{equation}
where the evaluation node $\tilde{f}_j$ ($j = 0, 1, \dots, N_\mathrm{Int}$) is defined as $\tilde{f}_j := \tilde{f}(t + j\Delta\tau, \, \mathbf{X}_{t + j\Delta\tau}, \, u(t + j\Delta\tau, \mathbf{X}_{t + j\Delta\tau}), \, \nabla_x u(t + j\Delta\tau, \mathbf{X}_{t + j\Delta\tau}))$. Because $\mathbf{X}_s$ has an explicit sampling way, any quadrature rule can be applied if needed. This method is the simplest way to generate trajectories, and also be applied in \cite{Cai2026a} and \cite{Cai26b}. Note that this method is independent of the solution of PDE, the generated trajectories need not to be updated every epoch. In practise, we can re-generate trajectories every 10-20 iterations. However, it is also worth noting that this method has to compute the gradient of $u$ providing $\mu\neq 0$. Although it can be achieved by backward propagation efficiently, the cost can be expensive for extremely high dimensional problem. The DRDM in \cite{Cai26b} can be used to reduce the auto differentiation cost.

\smallskip\vspace{0.15cm}
\noindent\textbf{Method ($II$): Full SDE discretization via Euler-Maruyama.} Given a set of parameters $\theta$, the straight forward way of generating paths is to solve \eqref{eq:Xmn} by Euler-Maruyama Scheme with a fixed step size $\Delta \tau>0$. For a given $(t_i,x_i)$, with a little abuse of notation,
\begin{equation}\label{eq:trajectory}
\mathbf{X}_{j+1}^{(m,i)} = \mathbf{X}_{j}^{(m,i)} + \mu\Big(\tau_{i,j}, \mathbf{X}_{j}^{(m,i)}, u_\theta(\tau_{i,j}, \mathbf{X}_{j}^{(m,i)}), \nabla_x u_\theta(\tau_{i,j}, \mathbf{X}_{j}^{(m,i)})\Big)\Delta\tau
+ \sigma\Big(\tau_{i,j}, \mathbf{X}_{j}^{(m,i)}, u_\theta(\tau_{i,j}, \mathbf{X}_{j}^{(m,i)})\Big)\sqrt{\Delta\tau}\, Z_{j+1}^{(m)},
\end{equation}
where $\mathbf{X}_0^{(m,i)} = x_i$, $\tau_{i,j} = t_i + j\Delta\tau$ ($j = 0, \dots, \lfloor(T-t_i)/\Delta\tau\rfloor$), and $\{Z_{j+1}^{(m)}\}_{j,m}$ are i.i.d. standard Gaussian vectors. Collecting intermediate discrete nodes under $P_s(s) = \sum_j \delta(s-j\Delta\tau)$, the empirical loss \eqref{eq:loss} reduces to
\begin{equation}\label{eq:loss_method2}
\mathcal{L}(\theta)=\frac{1}{NM}\sum_{(t_i,x_i)}\frac{1}{N_{\mathrm{Int}}^{(i)}}\sum_{j=1}^{N_{\mathrm{Int}}^{(i)}} \sum_{m=1}^{M} \left| u_\theta(t_i, x_i)-\mathfrak{Sg}\Bigg[ u_\theta(\tau_{i,j}, \mathbf{X}_j^{(m,i)})-\Delta\tau \sum_{l=1}^{j} f\Big(\tau_{i,l}, \mathbf{X}_l^{(m,i)}, u_\theta(\tau_{i,l}, \mathbf{X}_l^{(m,i)}), \nabla_x u_\theta(\tau_{i,l},\mathbf{X}_l^{(m,i)})\Big) \Bigg] \right|^2,
\end{equation}
with $N_{\mathrm{Int}}^{(i)}:=\lfloor(T-t_i)/\Delta\tau\rfloor$. This method makes full use of all the points needed for generating every single path. Since \eqref{eq:trajectory} relies on the current $u_{\theta}$, once the parameter $\theta$ is updated, $u_{\theta}$ is changed and one has to re-compute \eqref{eq:trajectory} and \eqref{eq:loss_method2}. The path generation has to be carried out in every epoch. So method ($II$) is more expansive than Method ($I$) when generating trajectories. However, the key advantage of this method is that when $\mu$ and $f$ are independent of $\nabla_x u$, this method is derivative free. The data generation stage only need the evaluation of DNN, which is efficient from programming aspect. Thus, Method ($II$) can be more efficient than Method ($I$) for this case. Numerical experiments verify it in Sec.~\ref{Sec:NR}.

\smallskip\vspace{0.15cm}
\noindent\textbf{Method ($III$): Freezing-and-Compensating strategy.} When the drift $\mu$ and the source $f$ depend on $\nabla_x u$, while the diffusion $\sigma$ is state-independent, we introduce a freezing-and-compensating technique to stabilize the iteration and avoid frequent trajectory re-simulations. Given the solution estimate $u_n$ from the $n$-th iteration, we freeze the drift term $\mu(t,x,u_n,\nabla_x u_n)$ and write
\[\mu(t,x,u,\nabla_x u) = \mu(t,x,u_n,\nabla_x u_n) + \left(\mu(t,x,u,\nabla_x u) - \mu(t,x,u_n,\nabla_x u_n)\right).\]
Moreover, to control the non-linearity introduced by the gradient dependence in $f(t,x,u,\nabla_x u)$,  we can decompose it as:
\begin{equation}
f(t,x,u,y)=f(t,x,u,0)+\varphi(t,x,u,\nabla_x u)^\top \nabla_x u,
\end{equation}
where $\varphi(t,x,u,\nabla_x u):=\int_{0}^{1}\nabla_{\!y}f(t,x,u,s\,\nabla_x u)\,\mathrm{d}s\in\mathbb{R}^d$. By freezing $\alpha_n \varphi$, where $\alpha_n\in (0,1)$ is constant, we can reformulate \eqref{eq:PDEs} as follow:
\begin{equation}
\begin{aligned}
\partial_t u &
+\Big(\mu(t,x,u_n,\nabla_x u_n)-\alpha_n \varphi(t,x,u_n,\nabla_x u_n)\Big)^\top\nabla_x u
+\frac{1}{2}\mathrm{Tr}\left\{\sigma\sigma^\top(t,x)\nabla_x^2 u\right\} =\tilde{f}(t,x,u,\nabla_x u)\\
& = f(t,x,u,\nabla_x u)-\alpha_n \varphi(t,x,u_n,\nabla_x u_n)^\top\nabla_x u -  \left(\mu(t,x,u,\nabla_x u) - \mu(t,x,u_n,\nabla_x u_n)\right)^\top\nabla_x u.
\end{aligned}
\end{equation}
For the $(n+1)$-th iteration, trajectories are generated via the decoupled SDE:
\begin{equation}
\mathrm{d}\mathbf{X}_\tau
=\Big(\mu\big(\tau,\mathbf{X}_\tau,u_n(\tau,\mathbf{X}_\tau),\nabla_x u_n(\tau,\mathbf{X}_\tau)\big)
-\alpha_n \varphi\big(\tau,\mathbf{X}_\tau,u_n(\tau,\mathbf{X}_\tau),\nabla_x u_n(\tau,\mathbf{X}_\tau)\big)\Big) \,\mathrm{d}\tau+\sigma(\tau,\mathbf{X}_\tau)\,\mathrm{d}\mathbf{B}_\tau.
\end{equation}
Because $u_n$ and $\nabla_x u_n$ are known from the previous iteration, all coefficients in the drift of $\mathbf{X}_\tau$ remain fixed during trajectory sampling. Comparing with method ($II$), the re-sampling procedure does not need to be applied every epoch. We can re-generate paths after 10-20 epoch of training. Concurrently, the compensation term together with original $f$ on the right-hand side serves as a pseudo-source, forming a consistent regression target in the loss function and ensuring strict mathematical equivalence to the original PDE. In particular, for the HJB consider in this work, source term $f(t,x,u,\nabla_x u)=\delta^2|\nabla_x u|^2$. Evaluating the integral identity yields the explicit vector-valued mapping $\varphi(t,x,u,\nabla_x u)=\int_{0}^{1}(s\nabla_x u)\mathrm{d}s=\delta^2\nabla_x u$. Method ($III$) freezes part of the gradient-dependent nonlinearity in the drift and compensates for it through the source term, thereby decoupling trajectory generation from the current solution update and avoiding the high computational overhead of frequent path re-simulations. The relaxation parameter $\alpha_n$ provides crucial flexibility in balancing the absorption of nonlinearities against iteration stability. While this strategy is particularly effective for managing quadratic gradient nonlinearities of the form $|\nabla_x u|^2$ frequently encountered in HJB equations, its effectiveness relies on the condition that the previous iterate $u_n$ is sufficiently accurate to render the compensation reliable.

%+++++++++++++++++++++++++++++++++++++++++++++++++
\section{Convergence analysis of the iSMART}
\label{Sec:convAnal}
In this section, we provide a rigorous convergence analysis for the iteration \eqref{eq:iSMARTLo} and \eqref{eq:picard1}.  We remark that this convergence analysis does not take into account the minibatch optimization step for \eqref{eq:updateun1} utilized in practical computations.

Denoting $\mathcal{D}=[0,T]\times\mathbb{R}^d$,  we work in the weighted Sobolev space
\begin{equation}
\mathbb{X}_{\beta,m}:=W^{1,\infty}_{\beta,m}(\mathcal{D})
= \Bigl\{ u\in L^{1}_{\mathrm{loc}}(\mathcal{D}):
\nabla_x u \text{ exists weakly and } \|u\|_{\beta,m}<\infty, m\in\mathbb{N}_{+}\Bigr\},
\end{equation}
endowed with the norm
\begin{equation}\label{eq:weightednorm}
\|\,u\,\|_{\beta,m}:= \sup_{(t,x)\in\mathcal{D}} e^{-\beta(T-t)}\left(\frac{|u(t,x)|}{1+|x|^m}\right)
 + \sup_{(t,x)\in\mathcal{D}} e^{-\beta(T-t)}\left(\frac{|\nabla_{x}u(t,x)|}{1+|x|^m}\right).
\end{equation}
The pair $(\mathbb{X}_{\beta,m},\|\cdot\|_{\beta,m})$ forms a Banach space. Because the norm controls the essential supremum of the weighted gradient, every $u\in\mathbb{X}_{\beta,m}$ is locally Lipschitz continuous and therefore differentiable almost everywhere.

For convenience, we also introduce the semi-norms
\begin{align*}
\big|\,u\,\big|_{\beta,m}:&=\sup_{(t,x)\in\mathcal{D}} e^{-\beta(T-t)}\left(\frac{|u(t,x)|}{1+|x|^m}\right),\\
\big|\,\nabla_{x}u\,\big|_{\beta,m}:&=\sup_{(t,x)\in\mathcal{D}} e^{-\beta(T-t)}\left(\frac{|\nabla_{x} u(t,x)|}{1+|x|^m}\right).
\end{align*}
These semi-norms will be used to obtain succinct estimates in the subsequent analysis.

Building upon the above functional framework, we now state the assumptions required for the convergence analysis.
\begin{itemize}
  \item[\textbf{A1.}] Drift coefficient.
      $\mu(t,x,y,z): [0,T]\times\mathbb{R}^d\times\mathbb{R}\times\mathbb{R}^d\to\mathbb{R}^d$
      is uniformly bounded and Lipschitz continuous:
      \begin{displaymath}
      \left\{\begin{aligned}
      &|\mu(t,x,y,z)|\le M, && M>0,\ \forall\,(t,x,y,z)\in[0,T]\times\mathbb{R}^{d}\times\mathbb{R}\times\mathbb{R}^{d},\\[2pt]
      &|\mu(t,x,y,z)-\mu(t,x',y',z')|
      \le L\left(|x-x'|+|y-y'|+|z-z'|\right), &&
          \forall\,t\in[0,T],\ x, x'\in\mathbb{R}^d,\ y, y'\in\mathbb{R},\ z, z'\in\mathbb{R}^d.
      \end{aligned}\right.
      \end{displaymath}
  \item[\textbf{A2.}] Diffusion coefficient.
      $\sigma(t,x,y): [0,T]\times\mathbb{R}^d\times\mathbb{R}\to\mathbb{R}^{d\times q}$
      is Lipschitz continuous, and uniformly elliptic:
      \begin{displaymath}
      \left\{\begin{aligned}
      &|\sigma(t,x,y)-\sigma(t,x',y')|
          \le L\left(|x-x'|+|y-y'|\right),
          &&\forall\,t\in[0,T],\ x, x'\in\mathbb{R}^d,\ y, y'\in\mathbb{R},\\[2pt]
      &\lambda^{-1}|\xi|^2 \le \xi^{\top}\sigma(t,x,y)\sigma^{\top}(t,x,y)\,\xi\le \lambda\,|\xi|^2, \qquad
          &&\forall\,t\in[0,T],\ x\in\mathbb{R}^d,\ y\in\mathbb{R},\ \mathbf{\xi}\neq0,\ \xi\in\mathbb{R}^d.
      \end{aligned}\right.
      \end{displaymath}
  \item[\textbf{A3.}] Source term.
      $f(t,x,y,z): [0,T]\times\mathbb{R}^d\times\mathbb{R}\times\mathbb{R}^d\to\mathbb{R}$
      satisfies the Lipschitz condition
      \begin{displaymath}
      |f(t,x,y,z)-f(t,x',y',z')|
      \le L\left|x-x'|+|y-y'||z-z'|\right), \quad
      \forall\,t\in[0,T],\ x, x'\in\mathbb{R}^d,\ y,y'\in\mathbb{R},\ z, z'\in\mathbb{R}^d.
      \end{displaymath}

  \item[\textbf{A4.}] Terminal condition.
      $g(x)\in\mathbb{X}_{\beta,1}$ and is Lipschitz continuous:
      \begin{displaymath}
      |g(x)-g(x')| \le L|x-x'|, \qquad \forall\, x, x'\in\mathbb{R}^d.
      \end{displaymath}
\end{itemize}
In the above assumptions, $|\cdot|$ denotes the Euclidean norm for vectors and the Frobenius norm for matrices. $L>0$ and $\lambda>0$ are some constants. Since Lipschitz continuity implies linear growth, we work mainly with $m=1$ and write $\mathbb{X}_\beta:=\mathbb{X}_{\beta,1}$ for simplicity. Throughout the paper, $C$ stands for a generic constant depending only on $L$, $M$, $T$, and $\lambda$; its value may change from line to line.

Here we focus on presenting the main theorem, i.e. the convergence result; the corresponding auxiliary lemmas and their interconnections are referred to \ref{Lems:ALTF}.

%\begin{theorem}\label{thm:converge}
%Assume that $g$ is Lipschitz continuous and that for every $n\ge 1$ the terminal condition $u_n(T,x)=g(x)$ holds. Then for any $h>0$ the sequence $\{u_n\}_{n=1}^\infty$ obtained from the iteration \eqref{eq:iterat} converges in $\mathbb{X}_\beta$ to the unique solution $u^*$ of Problem \eqref{eq:PDEs}.
%\end{theorem}

\begin{theorem}\label{thm:converge}
Assume that the drift $\mu$ satisfies the boundedness and Lipschitz conditions in Assumption~\textbf{A1}, the diffusion $\sigma$ satisfies the Lipschitz and uniform ellipticity conditions in Assumption~\textbf{A2}, and the source term $f$ satisfies the Lipschitz condition in Assumption~\textbf{A3}. Let the terminal data $g\in\mathbb{X}_{\beta,1}$ be Lipschitz continuous as in Assumption~\textbf{A4}. Suppose further that the terminal condition $u_n(T,x)=g(x)$ holds for every $n\ge 1$. Then, for any $h>0$, the sequence $\{u_n\}_{n=1}^\infty$ generated by the iteration \eqref{eq:iSMARTLo} converges in $\mathbb{X}_\beta$ to the unique solution $u^*$ of Problem \eqref{eq:PDEs}.
\end{theorem}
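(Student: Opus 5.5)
Our plan is to show that the map $\Phi:\mathbb{X}_\beta\to\mathbb{X}_\beta$, $u_{n+1}=\Phi(u_n)$ defined by \eqref{eq:iSMARTLo} (with the forward process \eqref{eq:Xn} driven by $u_n$), is a strict contraction in the weighted norm $\|\cdot\|_{\beta,1}$ once $\beta$ is large. Banach's fixed point theorem then gives convergence of $\{u_n\}$ to a unique fixed point $u^*\in\mathbb{X}_\beta$. By the martingale representation \eqref{eq:iSMART_mr}, the solution of \eqref{eq:PDEs} is a fixed point, so uniqueness identifies the two.

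The contraction estimate cannot be run directly on \eqref{eq:iSMARTLo} with a fixed lookahead $s$, because $u_n(t+s,\cdot)$ enters with coefficient one. We therefore first reduce to the Picard form \eqref{eq:picard1}. If $w=\Phi(u)$ solves the linear problem $\partial_t w+\mathcal{A}_u w=f(\cdot,u,\nabla_x u)$ with $w(T)=g$, where $\mathcal{A}_u$ has coefficients frozen at $u$, then the tower property and the Markov property of the frozen SDE show that the fixed-$s$ representation and the full-horizon representation define the same $w$. This holds for any $h=s>0$, which explains the phrase ``for any $h>0$'' in the statement. So it suffices to study
\[
\Phi(u)(t,x)=\mathbb{E}^{t,x}\Big[g(\mathbf{X}^u_T)-\int_t^T f\big(\tau,\mathbf{X}^u_\tau,u,\nabla_x u\big)\,\mathrm{d}\tau\Big].
\]

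Next we check that $\Phi$ maps $\mathbb{X}_\beta$ into itself. Linear growth of $g$ and $f$ (from \textbf{A3}--\textbf{A4}), boundedness of $\mu$ (\textbf{A1}) and the moment bound $\mathbb{E}|\mathbf{X}_\tau|\le C(1+|x|)$ give the weighted bound on $|\Phi(u)|$. For $\nabla_x\Phi(u)$ we do not differentiate the flow, since $u$ is only Lipschitz. Instead we write $\Phi(u)$ through the transition density $p^u$ of the frozen, uniformly elliptic diffusion (\textbf{A2}). Gaussian (Aronson-type) bounds $|\nabla_x p^u(t,x;\tau,y)|\le C(\tau-t)^{-1/2}(\tau-t)^{-d/2}e^{-c|y-x|^2/(\tau-t)}$ then give
\[
|\nabla_x\Phi(u)(t,x)|\le C(1+|x|)+C\int_t^T(\tau-t)^{-1/2}\,\mathbb{E}\big[\,|f(\tau,\cdot,u,\nabla_x u)|\,\big]\,\mathrm{d}\tau ,
\]
with the Lipschitz $g$ handled directly.

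For the contraction, take $u,\bar u\in\mathbb{X}_\beta$ and split $\Phi(u)-\Phi(\bar u)$ into two parts.
\begin{itemize}
\item[(i)] The source difference: the same density $p^u$ applied to $f(u,\nabla u)-f(\bar u,\nabla\bar u)$. By \textbf{A3} this is bounded by $L(|u-\bar u|+|\nabla u-\nabla\bar u|)\le L(1+|y|)e^{\beta(T-\tau)}\|u-\bar u\|_{\beta}$.
\item[(ii)] The change of generator, $p^u-p^{\bar u}$. Via the Duhamel/parametrix identity $\Phi(u)-\Phi(\bar u)=\mathbb{E}^{u}\int_t^T[(\mu_u-\mu_{\bar u})\cdot\nabla\Phi(\bar u)+\tfrac12\mathrm{Tr}((\sigma\sigma^\top_u-\sigma\sigma^\top_{\bar u})\nabla^2\Phi(\bar u))]\,\mathrm{d}\tau$, this is controlled by \textbf{A1}--\textbf{A2} times $\|u-\bar u\|_\beta$. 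The second-order term is moved onto the kernel by integration by parts, leaving one derivative on $\Phi(\bar u)$ and one on $p$.
\end{itemize}
Each contribution then has the form
\[
C\int_t^T(\tau-t)^{-1/2}e^{\beta(T-\tau)}\,\mathrm{d}\tau\;\|u-\bar u\|_\beta .
\]
Multiplying by $e^{-\beta(T-t)}$, this is at most $C\int_0^\infty r^{-1/2}e^{-\beta r}\,\mathrm{d}r=C\sqrt{\pi/\beta}$. Choosing $\beta$ large then gives $\|\Phi(u)-\Phi(\bar u)\|_\beta\le\tfrac12\|u-\bar u\|_\beta$. The growth weight $1+|x|$ is propagated using the moment bounds for the frozen SDE.

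We expect the main obstacle to be part (ii), the dependence of the diffusion on $u$ through $\sigma(t,x,u)$. It requires second-derivative or Hölder control on $\Phi(\bar u)$, or equivalently gradient-difference estimates on densities with merely Lipschitz coefficients. We would first attempt the parametrix estimate above. If that fails, we would establish the contraction in the weaker $C^0$-plus-gradient setting using Schauder-type interior bounds, which requires noting that $\Phi(\bar u)$ is $C^{1,\alpha}$ uniformly on bounded sets of $\mathbb{X}_\beta$.
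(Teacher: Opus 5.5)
\textbf{The reduction to the Picard form fails.} Your reduction of \eqref{eq:iSMARTLo} to \eqref{eq:picard1} is false, and everything after it depends on it. The Markov property of the SDE frozen at $u_n$ gives $\Phi(u_n)(t,x)=\mathbb{E}^{t,x}\bigl[\Phi(u_n)(t+s,\mathbf{X}_{t+s})-\int_t^{t+s}f\,\mathrm{d}\tau\bigr]$. That is, it places the \emph{new} function $\Phi(u_n)$ at the lookahead time. The iteration \eqref{eq:iSMARTLo}, by contrast, evaluates the \emph{old} iterate $u_n(t+s,\cdot)$ there.

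The two maps share fixed points, but they are different maps and generate different sequences. Take $\mu\equiv0$, $\sigma\equiv1$, $g\equiv0$, $f(t,x,y,z)=y$ and $u_n(t,x)=T-t$. Then \eqref{eq:picard1} gives $u_{n+1}(t,x)=-\tfrac12(T-t)^2$, while \eqref{eq:iSMARTLo} gives $(T-t-s)-\tfrac12\bigl[(T-t)^2-(T-t-s)^2\bigr]$. These differ by $(T-t-s)+\tfrac12(T-t-s)^2\neq0$ whenever $t+s<T$. The two schemes coincide only for $t+h\ge T$, which is Case~1 of the paper's proof. So a contraction estimate for $\Phi$, however complete, says nothing about the sequence in the theorem when $t+h<T$. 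It also does not explain the role of $h$.

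\textbf{The missing idea: the time weight damps the lookahead term.} Your premise that the estimate ``cannot be run directly'' on \eqref{eq:iSMARTLo} is exactly what the paper refutes. The lookahead term need not be removed, because the time weight in $\|\cdot\|_\beta$ damps it. In Case~2 the paper works with \eqref{eq:iSMARTLo} itself and splits the lookahead difference into two pieces:
\begin{itemize}
\item $\mathbb{E}^{t,x}\bigl[u_n(t+h,\mathbf{X}^n_{t+h})-u_n(t+h,\mathbf{X}^{n-1}_{t+h})\bigr]$, bounded via $|\nabla_x u_n|\le M$ and Lemma~\ref{lem:X} by $\frac{C}{\sqrt{\beta}}e^{\beta(T-t)}(1+|x|)\|u_n-u_{n-1}\|_\beta$;
\item $\mathbb{E}^{t,x}\bigl[(u_n-u_{n-1})(t+h,\mathbf{X}^{n-1}_{t+h})\bigr]$, bounded by $Ce^{\beta(T-t-h)}(1+|x|)\,|u_n-u_{n-1}|_\beta$.
\end{itemize}
After multiplying by $e^{-\beta(T-t)}/(1+|x|)$, the coefficient-one term carries the factor $e^{-\beta h}$. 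Add the $O(\beta^{-1/2})$ bounds on the running-cost integral over $[t,t+h]$ and on the gradient part. The one-step constant is then $C/\sqrt{\beta}+Ce^{-\beta h}$, which is below $1$ for any fixed $h>0$ once $\beta$ is large. That is where ``for any $h>0$'' comes from.

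\textbf{The gradient part also needs care.} The gradient part of the norm must then be estimated for \eqref{eq:iSMARTLo} itself. Your reduction and the proof of Lemma~\ref{lem:gradientE} both use the frozen linear PDE with terminal data $g$. The fixed-$h$ iterate satisfies that PDE only for $t\in[T-h,T]$. So the lookahead contribution to $\nabla_x u_{n+1}$ has to be handled separately. Its $(u_n-u_{n-1})(t+h,\cdot)$ part is again damped by $e^{-\beta h}$, up to a factor $h^{-1/2}$ from the kernel gradient.
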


\begin{proof}
We divide the proof into two stages: first we establish that the sequence $\{u_n\}$ is Cauchy in $\mathbb{X}_\beta$, and then we identify its limit as the unique solution of \eqref{eq:PDEs}.

\medskip\noindent\textbf{Stage 1: Cauchy property.}
Fix $h>0$ and $(t,\mathbf{x})\in[0,T)\times\mathbb{R}^d$. We distinguish two cases.

\medskip\noindent\textbf{Case 1: $t+h\ge T$.}
By the probabilistic representation of $u_{n+1}$ and $u_n$ combined with the Lipschitz continuity of $g$ and $f$, we obtain
\begin{equation}\label{eq:diff_T}
\begin{aligned}
|u_{n+1}(t,x)-u_n(t,x)|
&\le\left|\mathbb{E}^{t,x}\left[g(\mathbf{X}_{T}^n)-g(\mathbf{X}_{T}^{n-1})\right]\right| \\
&\quad + \left|\mathbb{E}^{t,x}\left[\int_t^T
        \left(f\Big(\tau,\mathbf{X}_\tau^n,u_n(\tau,\mathbf{X}_\tau^n),\nabla_x u_n(\tau,\mathbf{X}_\tau^n)\Big)-f\left(\tau,\mathbf{X}_\tau^{n-1},u_{n-1}(\tau,\mathbf{X}_\tau^{n-1}),\nabla_x u_{n-1}(\tau,\mathbf{X}_\tau^{n-1})\right)\right)\,\mathrm{d}\tau\right]\right| \\
&\le L\,\mathbb{E}^{t,x}\left[\big|\mathbf{X}_{T}^n-\mathbf{X}_{T}^{n-1}\big|\right]
      + L\,\mathbb{E}^{t,x}\left[\int_t^T\left(1+M+\frac{M}{\sqrt{T-\tau}}\right)
        \big|\mathbf{X}_\tau^n-\mathbf{X}_\tau^{n-1}\big|\,\mathrm{d}\tau \right]\\
&\quad + L\,\mathbb{E}^{t,x}\left[\int_t^T
        \left(\left|u_n(\tau,\mathbf{X}_\tau^{n-1})-u_{n-1}(\tau,\mathbf{X}_\tau^{n-1})\right|
        +\left|\nabla_x u_n(\tau,\mathbf{X}_\tau^{n-1})-\nabla_x u_{n-1}(\tau,\mathbf{X}_\tau^{n-1})\right|\right)\mathrm{d}\tau\right].
\end{aligned}
\end{equation}
Applying Lemma~\ref{lem:moment} and the definition of $\|\cdot\|_\beta$ bounds the last integral by
\begin{displaymath}
C\|u_n-u_{n-1}\|_\beta\int_t^T e^{\beta(T-\tau)}\left(1+|x|\right)\,\mathrm{d}\tau
\le \frac{C}{\beta}\,e^{\beta(T-t)}\left(1+|x|\right)\,\|u_n-u_{n-1}\|_\beta .
\end{displaymath}
Using Lemma~\ref{lem:X} to handle the terms involving $\mathbf{X}_T$ and $\mathbf{X}_\tau$, we deduce
\begin{equation}\label{eq:case1}
|u_{n+1}(t,x)-u_n(t,x)|
\le \frac{C}{\sqrt{\beta}}\,e^{\beta(T-t)}\left(1+|x|\right)\,\|u_n-u_{n-1}\|_\beta .
\end{equation}

\medskip\noindent\textbf{Case 2: $t+h<T$.}
We now split the interval at $t+h$ and employ the representations
\begin{align}
u_{n+1}(t,x)&=\mathbb{E}^{t,x}\left[u_n(t+h,\mathbf{X}_{t+h}^n)
      +\int_t^{t+h}f\Big(\tau,\mathbf{X}_\tau^n,u_n,\nabla_x u_n\Big)\,\mathrm{d}\tau\right], \label{eq:rep_n+1}\\
u_n(t,x)&=\mathbb{E}^{t,x}\left[u_{n-1}(t+h,\mathbf{X}_{t+h}^{n-1})
      +\int_t^{t+h}f\Big(\tau,\mathbf{X}_\tau^{n-1},u_{n-1},\nabla_x u_{n-1}\Big)\,\mathrm{d}\tau\right]. \label{eq:rep_n}
\end{align}
Subtracting and proceeding with the same Lipschitz estimates yields
\begin{equation}\label{eq:diff_h}
\begin{aligned}
\big|u_{n+1}(t,x)-u_n(t,x)\big|
&\le \mathbb{E}^{t,x}\left[\bigl|u_n(t+h,\mathbf{X}_{t+h}^n)-u_n(t+h,\mathbf{X}_{t+h}^{n-1})\bigr|\right]
   + \mathbb{E}^{t,x}\left[\bigl|u_n(t+h,\mathbf{X}_{t+h}^{n-1})-u_{n-1}(t+h,\mathbf{X}_{t+h}^{n-1})\bigr|\right] \\
&\quad + L\,\mathbb{E}^{t,x}\left[\int_t^{t+h}\left(1+M+\frac{M}{\sqrt{T-\tau}}\right)
        |\mathbf{X}_\tau^n-\mathbf{X}_\tau^{n-1}|\,\mathrm{d}\tau\right] \\
&\quad + L\,\mathbb{E}^{t,x}\left[\int_t^{t+h}
        \Bigl(\big|u_n(\tau,\mathbf{X}_\tau^{n-1})-u_{n-1}(\tau,\mathbf{X}_\tau^{n-1})\big|
        +\big|\nabla_x u_n-\nabla_x u_{n-1}\big|\Bigr)\mathrm{d}\tau\right].
\end{aligned}
\end{equation}
For the first term on the right, we invoke the gradient bound $|\nabla_x u_n|\le M$ (Lemma~\ref{lem:boundofgu}) together with Lemma~\ref{lem:X}:
\begin{displaymath}
\mathbb{E}^{t,x}\left[\big|u_n(t+h,\mathbf{X}_{t+h}^n)-u_n(t+h,\mathbf{X}_{t+h}^{n-1})\big|\right]
\le M\,\mathbb{E}^{t,x}\left[\big|\mathbf{X}_{t+h}^n-\mathbf{X}_{t+h}^{n-1}\big|\right]
\le \frac{C}{\sqrt{\beta}}\,e^{\beta(T-t)}\left(1+\big|x\big|\right)\,\big\|u_n-u_{n-1}\big\|_\beta .
\end{displaymath}
The second term is controlled directly via the $|\cdot|_\beta$-norm, i.e.,
\begin{displaymath}
\mathbb{E}^{t,x}\left[\big|u_n(t+h,\mathbf{X}_{t+h}^{n-1})-u_{n-1}(t+h,\mathbf{X}_{t+h}^{n-1})\big|\right]
\le e^{\beta(T-t-h)}\left(1+\big|x\big|\right)\,\big|u_n-u_{n-1}\big|_\beta .
\end{displaymath}
The remaining integrals are estimated exactly as in Case~1, but now over $[t,t+h]$; they contribute an additional
$\frac{C}{\sqrt{\beta}}e^{\beta(T-t)}(1+|x|)\|u_n-u_{n-1}\|_\beta$.
Gathering all bounds, we arrive at
\begin{equation}\label{eq:case2}
\big|u_{n+1}(t,x)-u_n(t,x)\big|
\le \left(\frac{C}{\sqrt{\beta}} + e^{-\beta h}\right) e^{\beta(T-t)}\left(1+\big|x\big|\right)\,\big\|u_n-u_{n-1}\big\|_\beta .
\end{equation}

From \eqref{eq:case1} and \eqref{eq:case2} we see that the estimate
\begin{equation}\label{eq:common}
\big|u_{n+1}(t,x)-u_n(t,x)\big|
\le \left(\frac{C}{\sqrt{\beta}}+e^{-\beta h}\right)e^{\beta(T-t)}\left(1+\big|x\big|\right)\,\big\|u_n-u_{n-1}\big\|_\beta
\end{equation}
holds in both cases. Dividing \eqref{eq:common} by $(1+|x|)e^{\beta(T-t)}$ and taking the supremum over $(t,x)$ gives
\begin{equation}\label{eq:norm}
\big|u_{n+1}-u_n\big|_\beta \le \left(\frac{C}{\sqrt{\beta}} + e^{-\beta h}\right) \big\|u_n-u_{n-1}\big\|_\beta .
\end{equation}
Lemma~\ref{lem:gradientE} provides the analogous gradient estimate:
\begin{equation}\label{eq:grad}
\big|\nabla_x u_{n+1}-\nabla_x u_n\big|_\beta\le\frac{C}{\sqrt{\beta}}\,\big\|u_n-u_{n-1}\big\|_\beta .
\end{equation}
Adding \eqref{eq:norm} and \eqref{eq:grad}, we choose $\beta$ sufficiently large and then, for that fixed $\beta$, choose $h>0$ large enough so that
\begin{displaymath}
\big\|u_{n+1}-u_n\big\|_\beta \le K\,\big\|u_n-u_{n-1}\big\|_\beta \qquad\text{with } 0<K<1.
\end{displaymath}
Consequently, for any $m,n\ge1$,
\begin{displaymath}
\big\|u_{n+m}-u_n\big\|_\beta \le \sum_{k=0}^{m-1} K^{n+k-1}\big\|u_1-u_0\big\|_\beta
\le C K^n \big\|u_1-u_0\big\|_\beta,
\end{displaymath}
which proves that $\{u_n\}$ is Cauchy in the complete space $\mathbb{X}_\beta$. Hence there exists $u^*\in\mathbb{X}_\beta$ such that $u_n\to u^*$ in $\mathbb{X}_\beta$; in particular, $|\nabla_x u_n-\nabla_x u^*|_\beta\to0$.

\medskip\noindent\textbf{Stage 2: Identification of the limit.}
The terminal condition is inherited immediately because each $u_n$ satisfies $u_n(T,\cdot)=g(\cdot)$, so $u^*(T,\cdot)=g(\cdot)$. Let $\mathbf{X}^*_t$ denote the solution of the SDE with coefficients $\mu(\cdot,u^*,\nabla_x u^*)$ and $\sigma(\cdot,u^*)$. Fix $(t,x)$ and choose $h>0$ small enough that $t+h<T$. From the iteration we have
\begin{displaymath}
u_{n+1}(t,x) = \mathbb{E}^{t,x}\left[u_n(t+h,\mathbf{X}_{t+h}^n)
      + \int_t^{t+h} f\Big(\tau,\mathbf{X}_\tau^n,u_n(\tau,\mathbf{X}_\tau^n),\nabla_x u_n(\tau,\mathbf{X}_\tau^n)\Big)\,\mathrm{d}\tau\right].
\end{displaymath}
Sending $n\to\infty$, the convergence established in Stage~1, together with Lemma~\ref{lem:moment} and Lemma~\ref{lem:X}, justifies passage to the limit (the arguments are identical to those in the estimates above, with $u_n$ replaced by $u^*$ and $\mathbf{X}^n$ by $\mathbf{X}^*$). We obtain
\begin{equation}\label{eq:dp}
u^*(t,\mathbf{x}) = \mathbb{E}^{t,x}\left[u^*(t+h,\mathbf{X}_{t+h}^*)
      + \int_t^{t+h} f\Big(\tau,\mathbf{X}_\tau^*,u^*(\tau,\mathbf{X}_\tau^*),\nabla_x u^*(\tau,\mathbf{X}_\tau^*)\Big)\,\mathrm{d}\tau\right].
\end{equation}
Since $h$ can be taken arbitrarily small, \eqref{eq:dp} is precisely the dynamic programming principle for the PDE \eqref{eq:PDEs}. A standard application of It\^o's formula then shows that $u^*$ satisfies the equation. Uniqueness follows from the contraction estimate already obtained; this completes the proof.
\end{proof}

\section{Numerical Results}
\label{Sec:NR}
To comprehensively assess the iSMART algorithm \ref{alg:ismart}, we consider three representative classes of martingale-based PDEs: a linear reaction-diffusion equation with sharp gradients, semi-linear problems of Burgers type, and a fully nonlinear HJB equation. These experiments demonstrate its high accuracy, robustness, and the broad applicability across different PDE settings.

As Theorem~\ref{thm:converge} and its proof suggest, the exact satisfaction of the terminal condition for $u_{\theta}(t,x)$ plays a crucial role in guaranteeing that the iterative scheme converges to the solution of \eqref{eq:PDEs}. Motivated by this observation, we design the neural network architecture for iSMART using a soft boundary-matching ansatz:
\begin{equation}\label{eq:NNA}
u_{\theta}(t,x) = N_{\theta}(t,x)\left(1 - e^{-\gamma(T-t)}\right) + g(x)\,e^{-\gamma(T-t)},
\end{equation}
where $N_{\theta}(t,x)$ denotes a fully connected feedforward neural network parameterized by $\theta$. The exponential weight $e^{-\gamma(T-t)}$, with $\gamma\in[3,6]$ (set to $5.0$ in our implementation), provides a smooth transition from the terminal boundary to the domain interior.
%We remark that the boundary conditions in Algorithm~\ref{alg:ismart} can be relaxed by adding an auxiliary terminal penalty loss $\mathcal{L}_{\mathrm{terminal}}$. This penalty-based approach avoids explicit architectural constraints, but typically requires a higher density of spatio-temporal sampling points and more SDE trajectories to achieve satisfactory boundary convergence and stability.
In our implementation of Algorithm~\ref{alg:ismart}, the common hyperparameters are set as $N=20$, $M=4096$, $K=1$, the spatial mixture weights $w_k$ in \eqref{eq:spatialsample} as $[0.01,0.05,0.1,0.5,1.5,2.5]$, and an initial learning rate as $10^{-3}$.

To quantify solution quality, we adopt the discrete relative $L^2$ error
\begin{equation}\label{eq:L2referrexact}
\mathrm{Relative}~L^2~\mathrm{Error}= \frac{\sqrt{\sum_{x \in D} |u_{\mathrm{pred}}(t,x) - u_{\mathrm{ref}}(t,x)|^2}}{\sqrt{\sum_{x\in D} |u_{\mathrm{ref}}(t,x)|^2}},
\end{equation}
where $D=\{x_i\}_{i=1}^{N}$ denotes the set of spatial points, $u_{\mathrm{pred}}$ and $u_{\mathrm{ref}}$ denote the numerical and reference solutions, respectively.

All computations were carried out on a workstation equipped with an Intel\textsuperscript{\textregistered} Core\texttrademark\ i9-14900K processor (3.20\,GHz) and a single NVIDIA GeForce RTX 4090 GPU.

%+++++++++++++++++++++++++++++++++++++++++++++++++++++++++++++++++
%        Example 1
%+++++++++++++++++++++++++++++++++++++++++++++++++++++++++++++++++
\subsection{Example 1: Linear reaction-diffusion equations with sharp gradients}
We begin by testing iSMART on a linear parabolic equation from \cite{Cai26b}. This problem combines a oscillatory terminal condition with a spatially varying drift, posing a clear challenge for numerical methods, especially in high dimensions:
\begin{equation}\label{prob:expline}
\left\{
\begin{aligned}
&\left(\partial_t+\mu^\top\partial_x+\frac{\vartheta^2}{2}\sum_{i=1}^d\partial_{x_i}^2\right)u(t,x)=0,
    &&\quad (t,x)\in[0,T)\times\mathbb{R}^d,\\
&u(T,x)=\frac{1}{d}\sum_{i=1}^d\Bigl(\tanh(x_i)+\cos(10x_i)\Bigr),
    &&\quad x\in\mathbb{R}^d.
\end{aligned}
\right.
\end{equation}
We choose $d=200$, and $T=2$, $\vartheta^2=0.1$. The drift coefficient is $\mu(t,x)=c~\left[\tanh(10x_1),\cdots,\tanh(10x_d)\right]^\top$, where $c>0$ is a constant. The terminal data contains a oscillatory component $\cos(10 x_i)$ while $\mu$ varies sharply near $x=0$. When $c\gg \vartheta$ the solution develops a steep gradient at the origin $(0,0)$. Simultaneously handling high dimensionality, oscillatory terminal data, and a locally sharp transition makes this example a demanding benchmark for both accuracy and robustness.

\begin{figure}[!h]
  \centering
  \begin{minipage}{0.95\textwidth}
    \centering
    \includegraphics[width=1\textwidth]{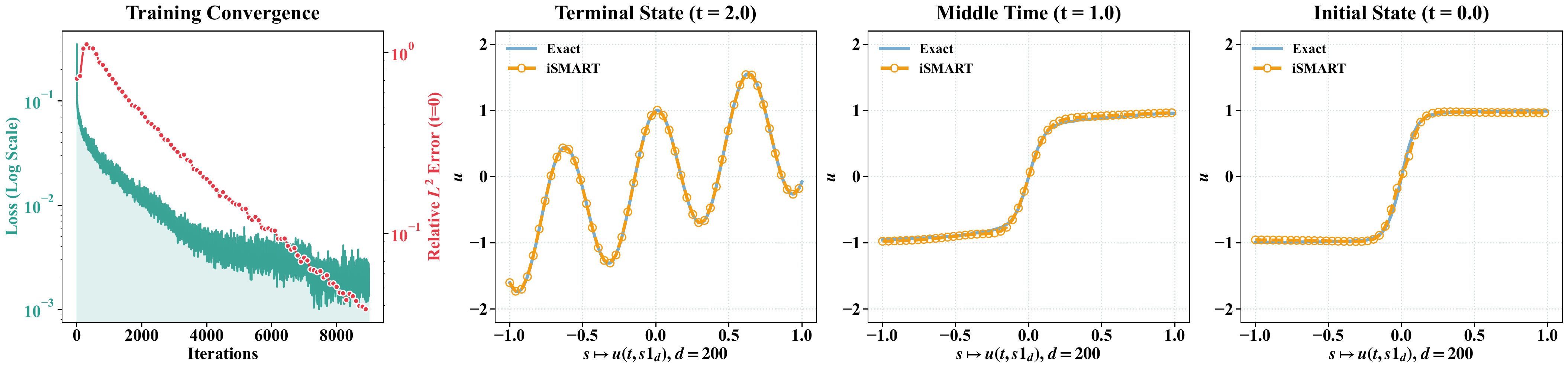}
  \end{minipage}
  \begin{minipage}{0.95\textwidth}
    \centering
    \includegraphics[width=1\textwidth]{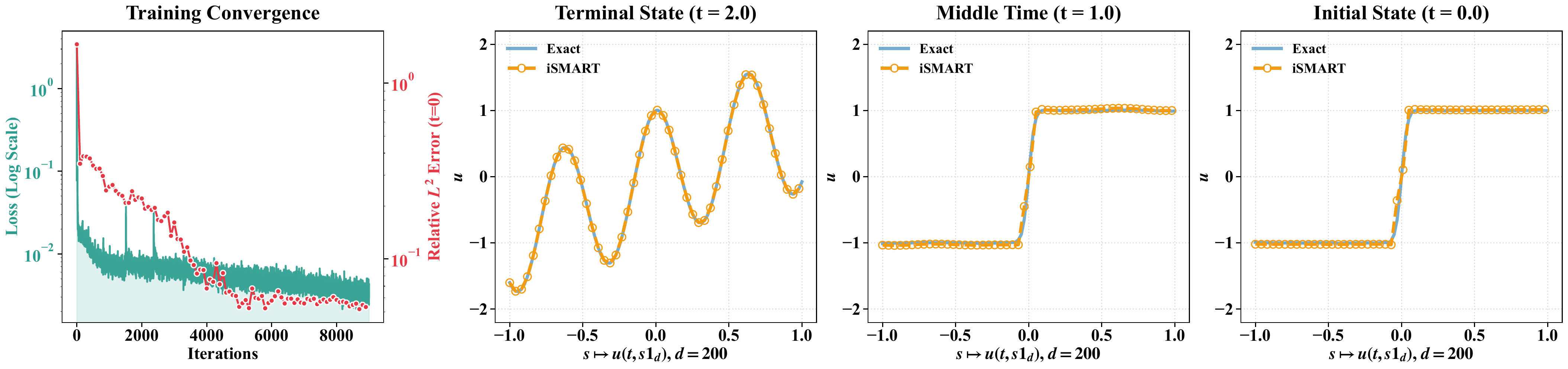}
  \end{minipage}
  \caption{Validation and quantitative assessment of the iSMART algorithm for the 200-dimensional Problem~\eqref{prob:expline}. The figure is split into two rows: the top row corresponds to $c=1$, while the bottom row corresponds to $c=5$. In the second to forth subfigures in each row, the numerical solution is compared with the reference solution along $s\mapsto s\mathbf{1}_d$ at representative snapshots $t=0.0$, $1.0$, $2.0$. In the first subfigure of each row, the loss histories and $L^2$ relative errors defined in~\eqref{eq:L2referrexact} is shown (in log scale) by green line and red dots, respectively. }
  \label{fig:LinearAD}
\end{figure}

We run iSMART with $6$ hidden layers and $\mathrm{SiLU}$ activation functions. Following the architecture in \cite{Cai26b}, the number of neurons in each hidden layer is 410. To better capture the underlying dynamics, we adopt path generation Method~$(II)$ for sampling trajectories. We consider both $c=1$ and $c=5$ in the drift coefficient.

The reference solution is constructed by Feynman-Kac formula and sufficient Monte Carlo samples. By the Feynman-Kac formula, the solution admits the probabilistic representation
\begin{equation}\label{eq:lexp}
u(t,x)=\mathbb{E}\left[u\big(T,\mathbf{X}_T\big)\right],
\end{equation}
where the process $\mathbf{X}_s$ follows
$\mathbf{X}_s=x+\int_t^s\mu\left(\tau,\mathbf{X}_\tau\right)\,\mathrm{d}\tau
+\sigma\left(\mathbf{B}_s-\mathbf{B}_t\right)$ for $s\in[t,T]$, and $\mathbf{B}$ is a standard $d$-dimensional Brownian motion. The reference solution is then generated by Monte Carlo with $10^6$ independent Euler-Maruyama paths with time step of $1/50$.

To visualize the high-dimensional results, we compare the computed solution and the reference solution along a one dimensional curve in $\mathbb{R}^d$. Let $\mathbf{1}_d=(1,1,\cdots,1)^{\top}\in\mathbb{R}^d$, we compare $u_{\mathrm{pred}}(t,s\mathbf{1}_d)$ and $u_{\mathrm{ref}}(t,s\mathbf{1}_d)$ for $s\in [-1,1]$ at $t=0$, $t=1$, and $t=2$.

Figure~\ref{fig:LinearAD} reports the numerical solutions against the reference for $c=1$ (first row) and $c=5$ (second row).  In both cases, iSMART accurately captures the sharp transition near $x=0$ and the predicted curves are almost indistinguishable from the reference ones. Quantitatively, the relative $L^2$ error remains below $5\%$ across all tests, confirming that the algorithm retains high accuracy for high dimensional problems.

%+++++++++++++++++++++++++++++++++++++++++++++++++++++++++++++++++
%        Example 2
%+++++++++++++++++++++++++++++++++++++++++++++++++++++++++++++++++
\subsection{Example 2: Semi-linear parabolic PDEs of Burgers type}
\label{exp:Ne2}
Next we are going to solve a semi-linear parabolic PDEs of Burgers type
\begin{equation}\label{eq:200DsemiLPDE}
\left\{\begin{aligned}
&\frac{\partial u(t,x)}{\partial t}+u(t,x)\sum_{i=1}^{d}\frac{\partial u(t,x)}{\partial x_i}+\frac{1}{2}\sum_{i=1}^{d} \frac{\partial^2 u(t,x)}{\partial x_i^2}+f(t,x)=0,
&&(t,x)\in[0,T)\times\mathbb{R}^{d},\\
&u(T,x)=\sin\left(\frac{1}{d}\sum_{i=1}^d x_i\right),                               &&x\in\mathbb{R}^{d},
\end{aligned}\right.
\end{equation}
with the exact solution $u(t,x)=\sin\left(\frac{1}{d}\sum_{i=1}^{d} x_i\right)e^{T-t}$. Writing  $\bar{x}=\frac{1}{d}\sum_{i=1}^{d} x_i$, the source term becomes $f(t,x)= u\cdot(1+\frac{1}{2d}-\cos(\bar{x})e^{T-t})$. The main numerical challenge comes from the non-linear convective term. We first solve the one-dimensional non-linear equation and then extend it to high dimensional case.

%+++++++++++++++++++++++++++++++++++++++++++++++++++++++++++++++++
%        Example 2.1
%+++++++++++++++++++++++++++++++++++++++++++++++++++++++++++++++++
\subsubsection{One-dimensional illustration and sampling strategies verification}
We first examine the one-dimensional case to clearly demonstrate the two path-generation methods employed within iSMART. When $d=1$, equation \eqref{eq:200DsemiLPDE} reduces to
\begin{equation}\label{examp:Semi-linear1D}
\left\{\begin{aligned}
&\frac{\partial u(t,x)}{\partial t}+\frac{1}{2}\frac{\partial^2 u(t,x)}{\partial x^2}
    +u(t,x)\frac{\partial u(t, x)}{\partial x}=f(t, x), && (t,x)\in[0,T)\times\mathbb{R},\\
&u(T,x)=\sin(x),                                        && x\in\mathbb{R},
\end{aligned}\right.
\end{equation}
with the exact solution $u(t,x)=\sin(x)e^{T-t}$ and the source term $f(t,x)=\sin(x)\cos(x)e^{2(T-t)}-1.5\sin(x)e^{T-t}$.

We choose $T=1$. For this example, we can treat the equation as
\[\frac{\partial u(t,x)}{\partial t}+\frac{1}{2}\frac{\partial^2 u(t,x)}{\partial x^2} = f(t,x) - u(t,x)\frac{\partial u(t, x)}{\partial x}\]
and apply Method~($I$) to generate trajectories. Meanwhile, we can also use Method~($II$) to generate trajectories based on the current $u_{\theta_n}(t,x)$ at each iteration. We run iSMART with the same network architecture for both sampling methods. The DNN has 4 hidden layers of $128$ neurons, the $\mathrm{SiLU}$ activation. Learning rate is $10^{-3}$, while batch size is set as $512$, and $N_\mathrm{Int}=20$ for Method ($I$). The training is carried out for 6001 iterations. Figures~\ref{fig:semilinearMI} and \ref{fig:semilinearMII} show the predicted and exact solutions at $t=0,0.5,1$ together with the absolute error profiles. The numerical of both sampling strategies have relative $L^2$ errors and maximum absolute errors below $2\%$. This agreement confirms that sampling strategies Method~($I$) and Method~($II$) both guarantees the high accuracy of iSMART.

\begin{figure}[!h]
  \centering
  \begin{minipage}{0.95\textwidth}
    \centering
    \includegraphics[width=1\textwidth]{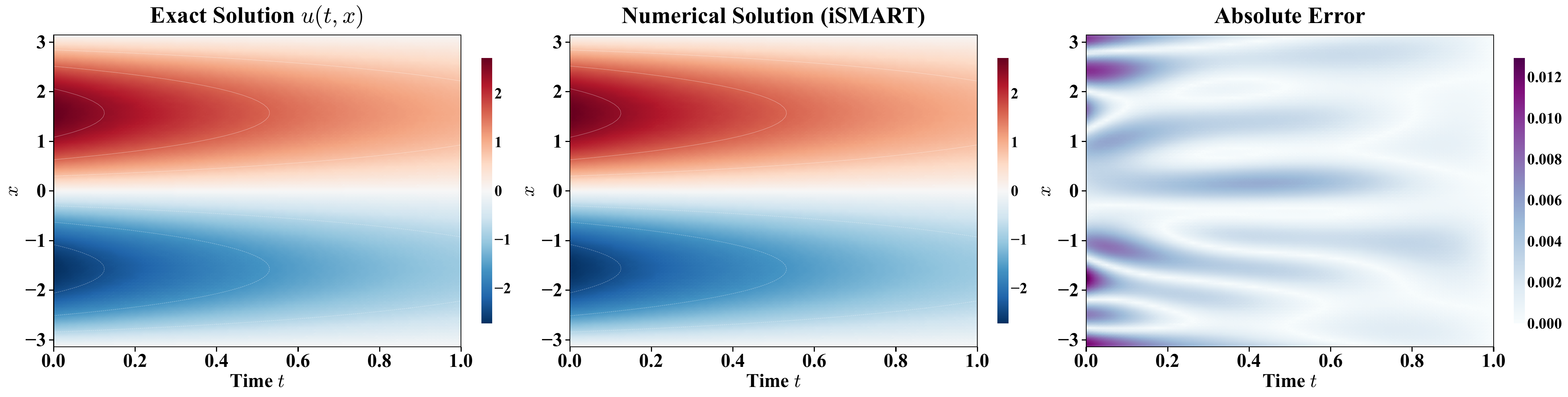}
  \end{minipage}
  \begin{minipage}{0.95\textwidth}
    \centering
    \includegraphics[width=1\textwidth]{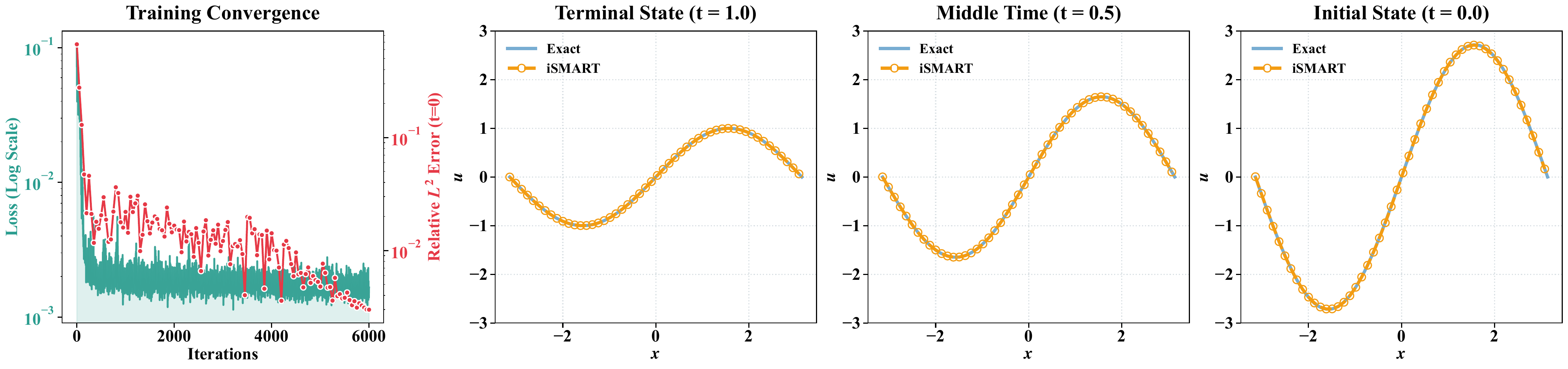}
  \end{minipage}
 \caption{Validation and quantitative assessment of the iSMART algorithm for the one-dimensional Problem \eqref{examp:Semi-linear1D} with sampling performed using Method~($I$). The numerical solution is compared with the exact solution over time at representative snapshots, including the initial ($t=0$), intermediate ($t=0.5$), and final ($t=1$) states, demonstrating excellent agreement. The associated absolute error profiles, loss histories, and $L^2$ relative errors defined in \eqref{eq:L2referrexact} further confirm the accuracy, stability, fast convergence, and high fidelity of the proposed algorithm.}
  \label{fig:semilinearMI}
\end{figure}

We remark that the wavy pattern in the absolute error visualizations of Figures~\ref{fig:semilinearMII} and~\ref{fig:semilinearCompare} is primarily due to coverage of the sampled trajectories. Since we only samples a few points $(t_i, x_i)$ in temporal-spatial domain and generate trajectories starting from them, these trajectories cannot cover the entire temporal-spatial domain equably. The local error can be relative large when there are only a small number of points lie in the local region. This phenomenon is also discussed in \cite{Cai26b}.

\begin{figure}[!h]
  \centering
  \begin{minipage}{0.95\textwidth}
    \centering
    \includegraphics[width=\textwidth]{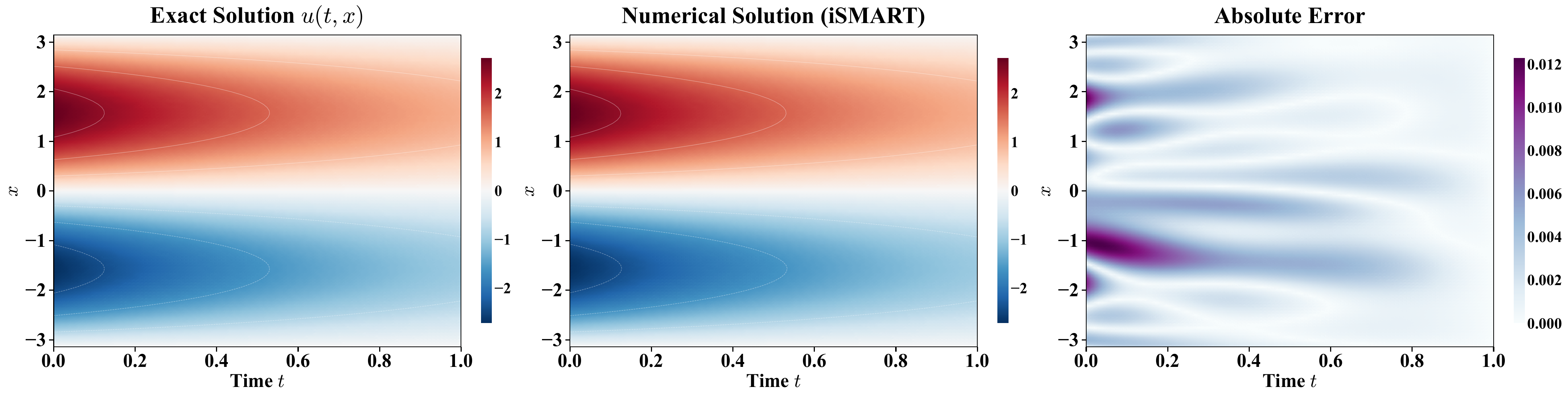}
  \end{minipage}
  \begin{minipage}{0.95\textwidth}
    \centering
    \includegraphics[width=1\textwidth]{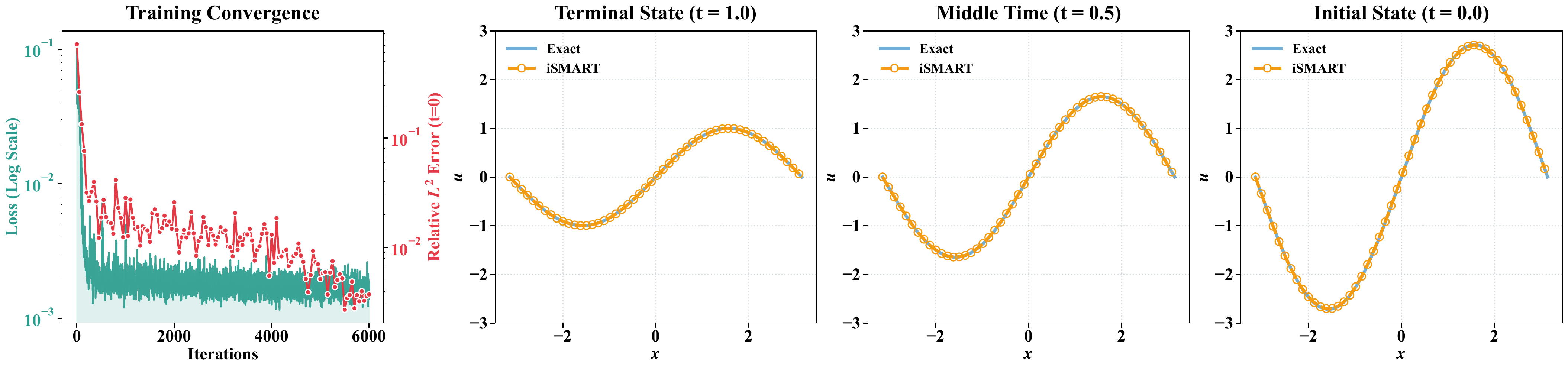}
  \end{minipage}
  \caption{Validation and quantitative performance evaluation of the iSMART algorithm for the 1D Problem \eqref{examp:Semi-linear1D} using sampling via Method~($II$). The temporal evolution of the numerical solution is benchmarked against the exact solution at $t=0$, $0.5$, and $1$, showing excellent agreement. The corresponding absolute error profiles, loss trajectories, and $L^2$ relative errors defined by \eqref{eq:L2referrexact} further demonstrate the algorithm's accuracy, stability, rapid convergence, and high fidelity.}
  \label{fig:semilinearMII}
\end{figure}

To further compare path generating strategies Method $(I)$, $(II)$ and assess the efficiency of iSMART, we compare it against the DeepMartNet algorithm on the same 1D semi-linear problem. Both methods are trained under identical configurations as described before. For each starting point, $M=100\,000$ full paths are generated to ensure a fair comparison. A direct comparison with DRDM\footnote{The code is available at \url{https://github.com/sx-fang/DRDM}.} is not included in this study, as its original implementation requires GPU memory and storage resources that are beyond our current hardware capacity.

Figure~\ref{fig:semilinearCompare} presents a side-by-side comparison of the numerical solutions of DeepMartNet and iSMART at time snapshots $t=0, 0.5, 1$. We find that both sampling strategies can help DeepMartNet and iSMART. DeepMartNet is clearly effective for this semi-linear equation, yet iSMART consistently yields more accurate solutions.

The GPU time and error statistics are shown in Table~\ref{tab:gputimeSemiLinear}. The statistics shows that both path generation methods works for iSMART and DeepMartNet. Note that the drift and source term do not depend on $\nabla_x u$, Method ($II$) is derivative free. Thus it is significantly more efficient than Method ($I$), as we discussed in subsection 3.2.

The statistics shows that Method ($I$) helps both DeepMartNet and iSMART to achieve a smaller error level, but cost more computing time, while Method ($II$) can be more efficient for generating trajectories and leads to similar performance for iSMART. However, it pulls down the accuracy of DeepMartNet.

On the aspect of accracy, the iSMART method achieves similar relative $L^2$ errors on the order of $10^{-3}$ at $t=0, 0.5$ for both sampling strategies, while DeepMartNet's errors are at least one order of magnitude larger. Moreover, iSMART requires substantially less GPU time: $310.80$\,s (Method~$I$) and $149.41$\,s (Method~$II$) versus $535.54$\,s and $386.37$\,s for DeepMartNet. We remark that since the neural network structure for iSMART is set as \eqref{eq:NNA} to ensure the terminal condition, the error at $t=1.0$ is always 0. On the other hand, DeepMartNet ensures terminal condition by adding a penalty term in loss function \cite{Cai2026a}, and hence exhibits a small but nonzero discrepancy. This comparison highlights the advantage of iSMART: it avoids the nested expectation evaluation of DeepMartNet, thereby making more efficient use of the simulation data. The consistent gains in both accuracy and computational speed confirm that iSMART is a highly competitive solver for semi-linear PDEs.

\begin{figure}[!h]
  \centering
  \begin{minipage}{0.95\textwidth}
    \centering
    \includegraphics[width=\textwidth]{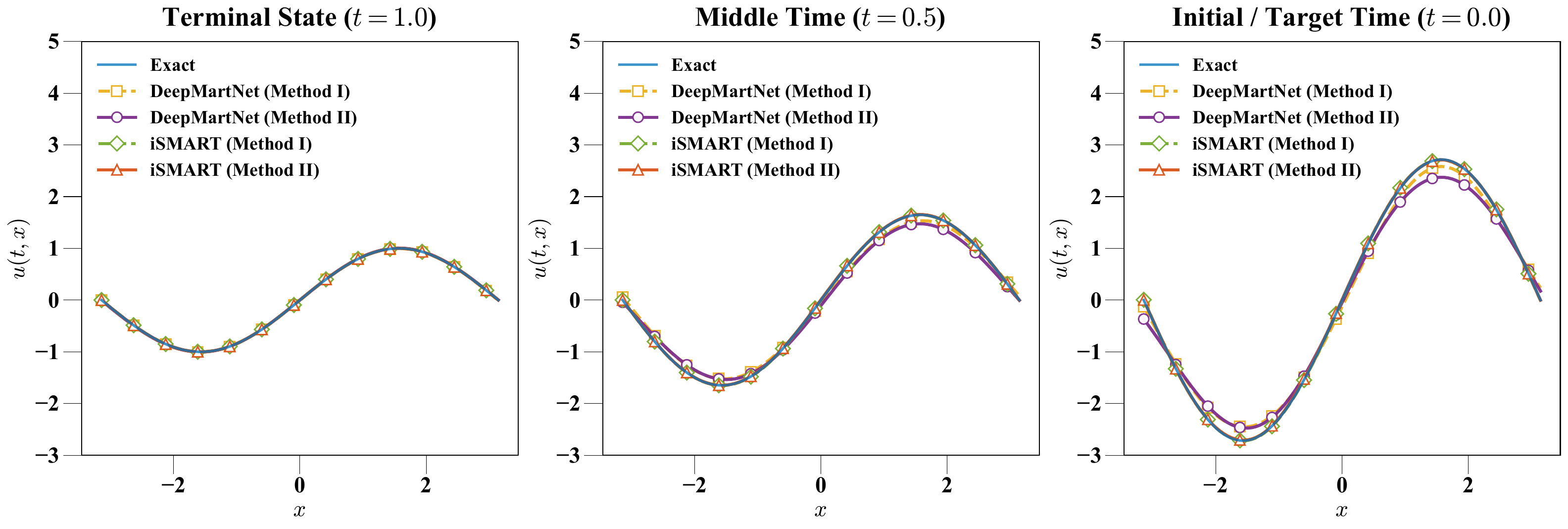}
    %\caption*{(b)}
  \end{minipage}
  \begin{minipage}{0.95\textwidth}
    \centering
    \includegraphics[width=\textwidth]{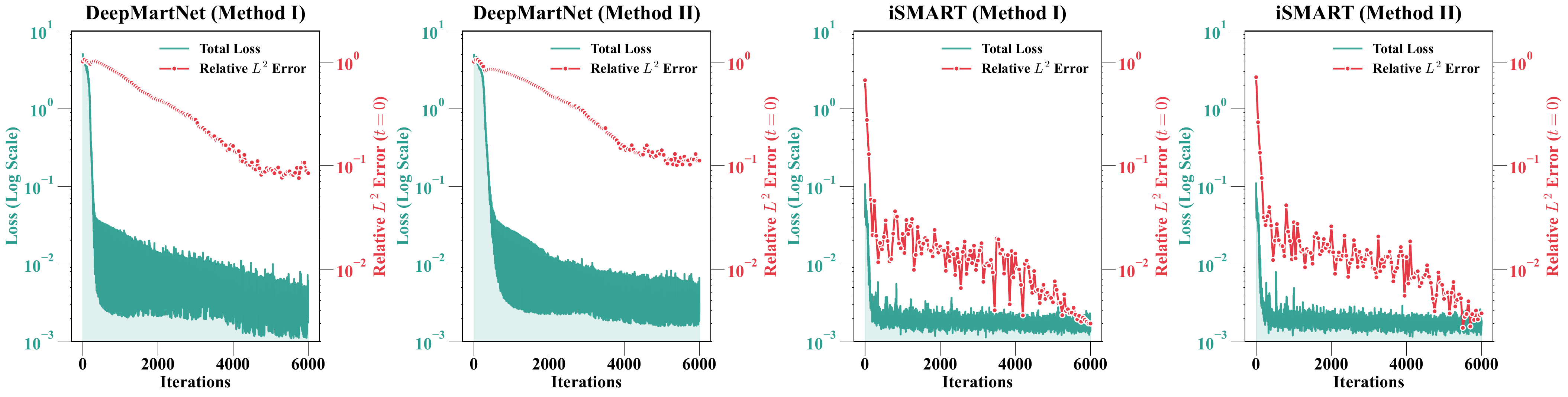}
    %\caption*{(a)}
  \end{minipage}
  \caption{Numerical performance comparison between DeepMartNet and iSMART for the 1D Problem \eqref{examp:Semi-linear1D}. The top row illustrates the agreement between the numerical solutions and the exact solution, where the two deep learning algorithms employ two sampling strategies Method~($I$) and Method~($II$). The lower row reports the corresponding loss histories (in green line) and relative $L^2$ errors (in red dots).}
  \label{fig:semilinearCompare}
\end{figure}

\begin{table}[!h]
\centering
\caption{Comparison of GPU time costs and relative $L_2$ error for DeepMartNet and iSMART with different path sampling strategies in solving 1D semi-linear problem \eqref{examp:Semi-linear1D}.}
\vspace{0.15cm}
\label{tab:gputimeSemiLinear}
\begin{tabular}{l c c c c}
\toprule
 & \multicolumn{2}{c}{DeepMartNet} & \multicolumn{2}{c}{iSMART} \\
\cmidrule(lr){2-3} \cmidrule(lr){4-5}
 & Method ($I$) & Method ($II$) & Method ($I$) & Method ($II$) \\
\midrule
GPU time costs (in seconds)         & 535.54\,s   & 386.37\,s   & 310.80\,s   & 149.41\,s   \\
\midrule
Relative $L^2$ error ($t=1.0$)      & 3.6077e-03  & 6.6317e-03  & 0.0000e+00  & 0.0000e+00  \\
Relative $L^2$ error ($t=0.5$)      & 8.4358e-02  & 1.0501e-01  & 1.7450e-03  & 3.3268e-03   \\
Relative $L^2$ error ($t=0.0$)      & 8.3291e-02  & 1.0926e-01  & 2.9814e-03  & 3.7465e-03   \\
\bottomrule
\end{tabular}
\end{table}

%+++++++++++++++++++++++++++++++++++++++++++++++++++++++++++++++++
%        Example 2.2
%+++++++++++++++++++++++++++++++++++++++++++++++++++++++++++++++++
\subsubsection{High-dimensitional case}
We next examine the Burgers-type semi-linear equation \eqref{eq:200DsemiLPDE} for $d=200$ dimensions. Because the exact solution depends only on the spatial mean $\bar{x}$, the problem serves as an ideal benchmark for verifying whether a high-dimensional solver can automatically discover this low-dimensional structure.

The structure of DNN is set as 4 hidden layers with 256 neurons in each layer, $\mathrm{SiLU}$ activation. Other setting are the same as previous experiment. We use iSMART to solve this high dimensional problem with path generation Method ($I$) and ($II$). The result is visualized along $\mathbf{1}_d$. Figure~\ref{fig:200DsemilinearMII} summarizes the performance of iSMART. The two rows correspond to Method~$(I)$ and Method~$(II)$. In all cases, the numerical and exact solutions are in close agreement, and the relative $L^{2}$ errors remain order $10^{-2}$.
%% 是对角线还是along \bar{x}？

\begin{figure}[!h]
  \centering
  \begin{minipage}{0.95\textwidth}
    \centering
    \includegraphics[width=\textwidth]{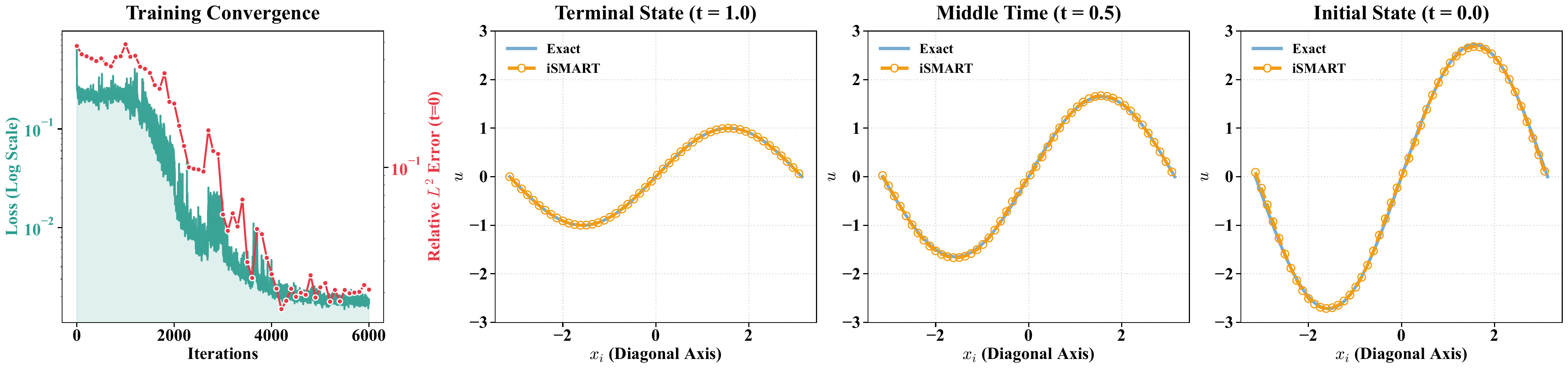}
    %\caption*{(b)}
  \end{minipage}
  \begin{minipage}{0.95\textwidth}
    \centering
    \includegraphics[width=\textwidth]{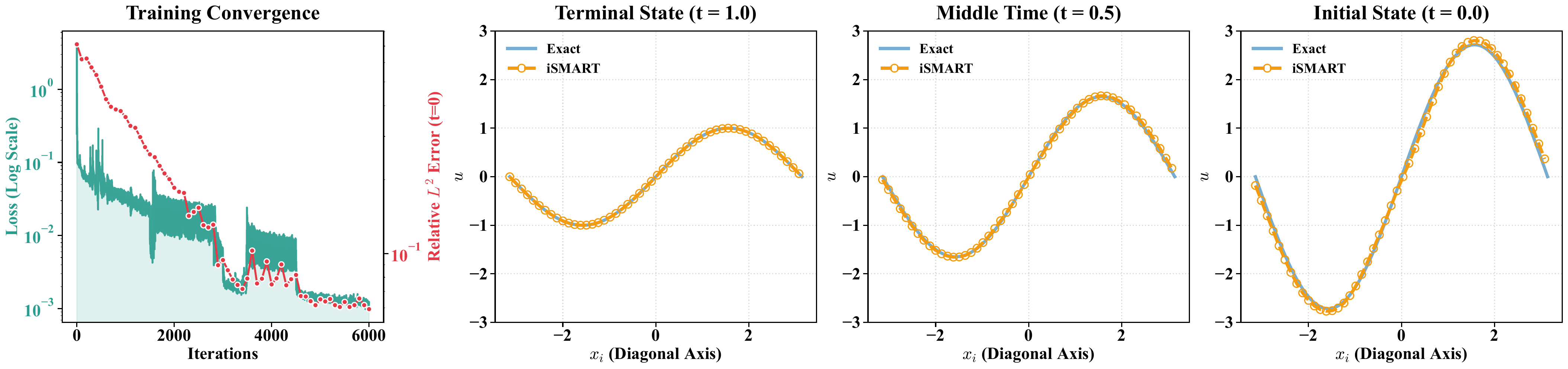}
    %\caption*{(a)}
  \end{minipage}
  \caption{Performance of iSMART on the 200-dimensional semi-linear problem \eqref{eq:200DsemiLPDE}.
           Top row: Sampling strategy Method~$(I)$.  Bottom row: Sampling strategy Method~$(II)$.
           From left to right: training loss (in green line) and relative $L^{2}$ error history (in red dots), solution profiles at $t=0,0.5,1$ along the main diagonal of the hypercube $[-1,1]^{200}$.}
  \label{fig:200DsemilinearMII}
\end{figure}

We also compare the performance of iSMART and DeepMartNet. Both algorithms share the same network architecture and training settings as described before. The results are provided in Figure~\ref{fig:200DsemilinearCompare} and Table~\ref{tab:200DgputimeSemiLinear}.

The upper panels of Figure~\ref{fig:200DsemilinearCompare} illustrates that iSMART approximates the exact solution more faithfully than DeepMartNet, regardless of the sampling strategy. The lower panels show that the training of iSMART is more stable and converge faster than DeepMartNet. The detailed quantitative results in Table~\ref{tab:200DgputimeSemiLinear} shows the accuracy of iSMART: it achieves relative $L^{2}$ errors of order $10^{-2}$–$10^{-3}$ at $t=0$ and $0.5$ for both sampling strategies, while DeepMartNet errors are roughly one order of magnitude larger. The zero error of iSMART at time $t=1$ dues to the construction \eqref{eq:NNA}, while DeepMartNet shows a small but non-negligible discrepancy as explained before.

\begin{figure}[!h]
  \centering
  \begin{minipage}{0.95\textwidth}
    \centering
    \includegraphics[width=\textwidth]{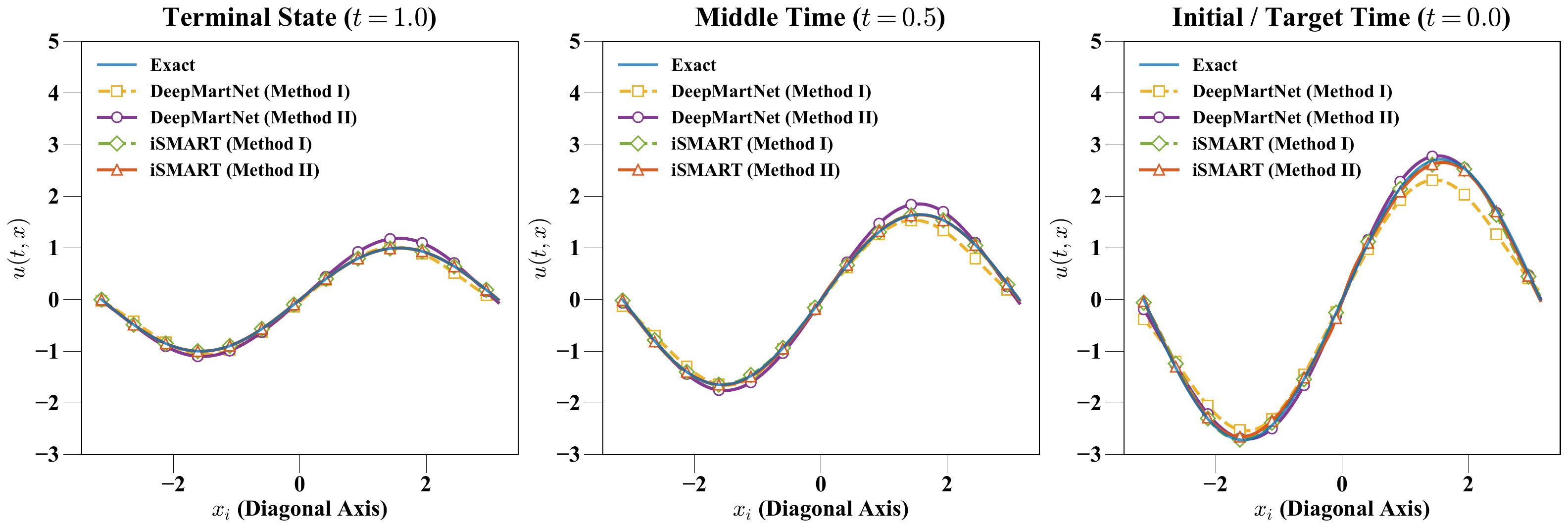}
    %\caption*{(b)}
  \end{minipage}
  \begin{minipage}{0.95\textwidth}
    \centering
    \includegraphics[width=\textwidth]{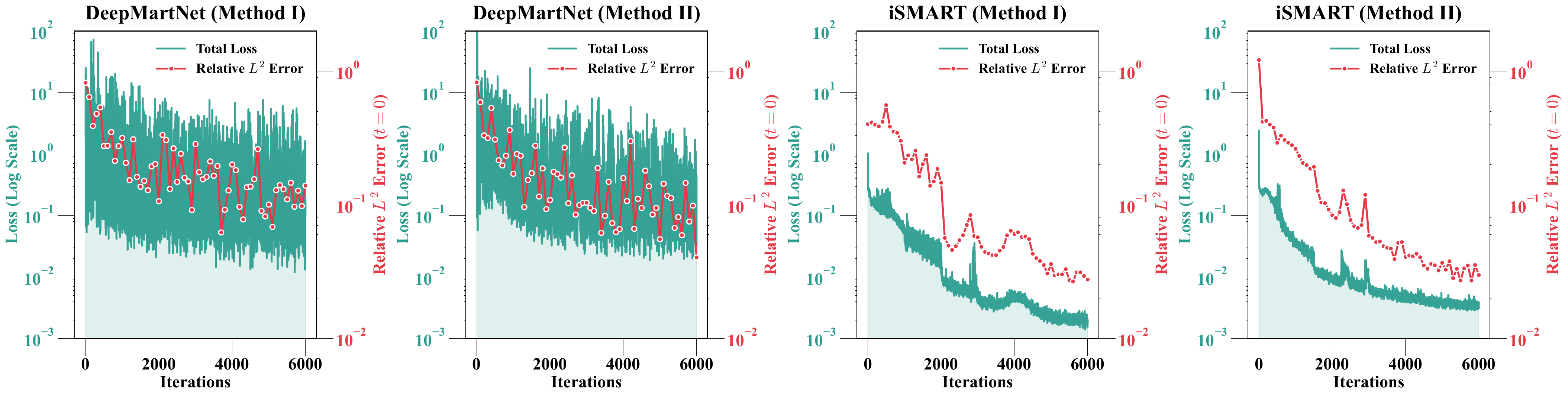}
    %\caption*{(a)}
  \end{minipage}
  \caption{Numerical performance comparison between DeepMartNet and iSMART for the 200-dimensional semi-linear problem \eqref{eq:200DsemiLPDE}. The top row illustrates the agreement between the numerical solutions and the exact solution, where the two deep learning algorithms employ two sampling strategies Method~($I$) and Method~($II$). The lower row reports the corresponding loss histories (in green line) and relative $L^2$ errors (in red dots).}
  \label{fig:200DsemilinearCompare}
\end{figure}

Comparing Table \ref{tab:gputimeSemiLinear} and \ref{tab:200DgputimeSemiLinear}, we find when the dimension increase from $d=1$ to $d=200$, the GPU time for both method increases only about $50\%$, indicating that they are both powerful for high dimensional PDE. Moreover, iSMART accomplishes higher accuracy with substantially lower GPU time. It cost only about $2/3$ GPU time of that of DeepMartNet. This advantage becomes particularly pronounced in high dimensions, where the nested expectation evaluation in DeepMartNet requires many more trajectories. iSMART, which does not require nested expectation, can be a scalable and efficient solver for high dimensional PDEs.

\begin{table}[!h]
\centering
\caption{GPU time and relative $L^{2}$ error for the 200-dimensional semi-linear problem \eqref{eq:200DsemiLPDE}. }
\vspace{0.15cm}
\label{tab:200DgputimeSemiLinear}
\begin{tabular}{l c c c c}
\toprule
 & \multicolumn{2}{c}{DeepMartNet} & \multicolumn{2}{c}{iSMART} \\
\cmidrule(lr){2-3} \cmidrule(lr){4-5}
 & Method ($I$) & Method ($II$) & Method ($I$) & Method ($II$) \\
\midrule
GPU Time Costs (in seconds)         & 699.21\,s   &487.13\,s    & 446.08\,s   &291.95\,s      \\
\midrule
Relative $L^2$ error ($t=1.0$)      & 8.8256e-02  & 1.3392e-01  & 0.0000e+00  & 0.0000e+00   \\
Relative $L^2$ error ($t=0.5$)      & 9.9271e-02  & 9.3032e-02  & 9.5502e-03  & 9.8838e-03   \\
Relative $L^2$ error ($t=0.0$)      & 1.3921e-01  & 4.0476e-02  & 2.7693e-02  & 2.9516e-02   \\
\bottomrule
\end{tabular}
\end{table}

%+++++++++++++++++++++++++++++++++++++++++++++++++++++++++++++++++
%        Example 3
%+++++++++++++++++++++++++++++++++++++++++++++++++++++++++++++++++
\subsection{Example 3: Nonlinear optimal control problem --HJB equations}
\label{Exp:HJB}
We solve a Hamilton–Jacobi–Bellman (HJB) equations raised in stochastic control to show the capability of iSMART for dealing with highly non-linearity. Consider a stochastic optimal control problem where the value function $u(t,x)$ satisfies the Hamilton-Jacobi-Bellman (HJB) equation
\begin{equation}\label{HJB-P}
\left\{\begin{aligned}
&\partial_t u(t,x)+\inf_{\kappa\in U}\Big\{\mathcal{L}^{\kappa}u(t,x)+c(t,x,\kappa)\Big\}=0,
                                                   &&\qquad (t,x)\in[0,T)\times \mathbb{R}^{d},\\
&u(T,x)=\Phi(x),                                   &&\qquad x\in\mathbb{R}^{d},
\end{aligned}\right.
\end{equation}
where the controlled infinitesimal generator \(\mathcal{L}^{\kappa}\) is $\mathcal{L}^{\kappa}:=\big(b+2\kappa\big)^{\top}\nabla_x
+\frac{1}{2}\mathrm{Tr}\,\Big\{\sigma\sigma^{\top}\,\nabla_x^{2}\Big\}$, with $\sigma=\sqrt{2 \delta}I$ and the running cost $c(t,x,\kappa)=\delta^{-2}|\kappa|^2$. Given $\delta>0$, constant vector field $b\in\mathbb{R}^{d}$, $T=1$, and control set  $U=\mathbb{R}^{d}$, the HJB equation is simplified to
\begin{equation}
\partial_t u+\inf_{\kappa\in \mathbb{R}^d}\left\{\big(b+2\kappa\big)^{\top}\nabla_xu
+\delta\Delta_x u+\delta^{-2}\big|\kappa\big|^2\right\}=0.
\end{equation}
The Hamiltonian is
$H(t,x,\nabla_x u,\Delta_x v)=\inf_{\kappa\in\mathbb{R}^d}\{(b+2\kappa)^\top\nabla_x u+\delta\Delta_x u+\delta^{-2}|\kappa|^2\}$. The optimal $\kappa^*$ can be found to be $\kappa^*(t,x)= -\delta^2\nabla_x u(t,x)$, and hence the HJB equation reduces to the following non-linear PDE:
\begin{equation}\label{eq:HJB_Simp}
\partial_t u+b^\top\nabla_x u-\delta^2\big|\nabla_x u\big|^2+\delta\Delta_x u=0,\qquad u(T,x)=\Phi(x).
\end{equation}
Equation \eqref{eq:HJB_Simp} contains the strongly nonlinear term $-\delta^2|\nabla_x u|^2$, which poses a fundamental challenge for PDE solvers.

\begin{figure}[!h]
  \centering
  \begin{minipage}{0.95\textwidth}
    \centering
    \includegraphics[width=\textwidth]{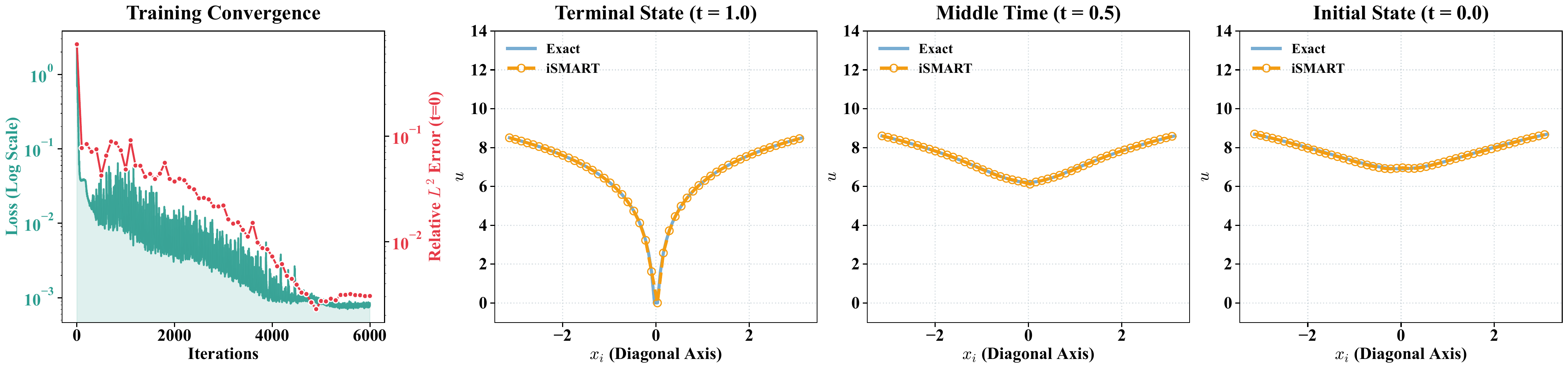}
    %\caption*{(b)}
  \end{minipage}
  \begin{minipage}{0.95\textwidth}
    \centering
    \includegraphics[width=\textwidth]{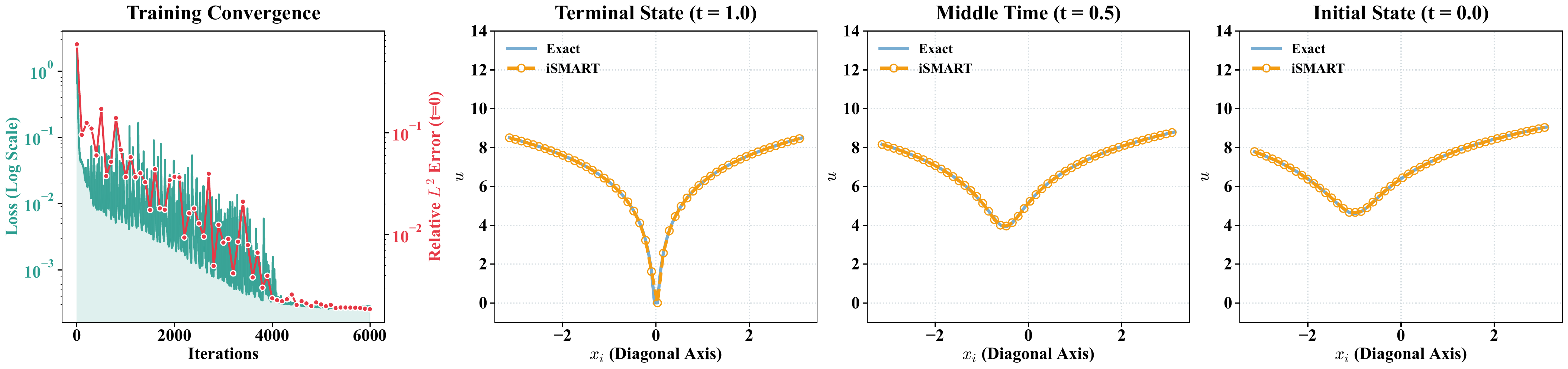}
    %\caption*{(a)}
  \end{minipage}
  \begin{minipage}{0.95\textwidth}
    \centering
    \includegraphics[width=\textwidth]{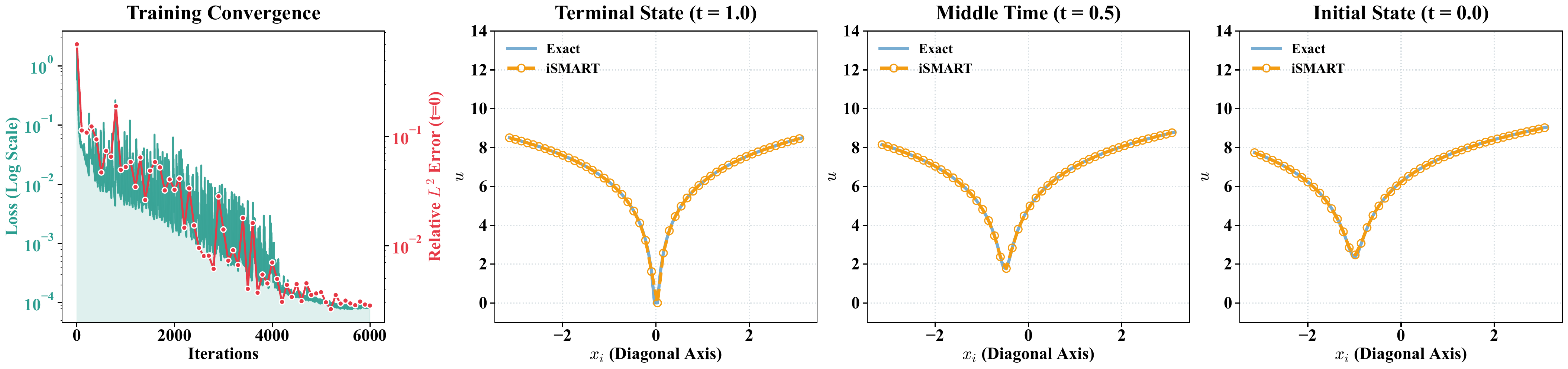}
    %\caption*{(a)}
  \end{minipage}
  \caption{Comparison between the iSMART numerical solutions and the exact solutions for the HJB equation \eqref{HJB-P} in the $1000$ dimensions. The first, second, and third rows correspond to \textbf{HJB-1}, \textbf{HJB-2-2a}, and \textbf{HJB-2-2b}, respectively, with paths generated by Method~($III$). In each row, the panels from left to right show the history of the training loss (in green line), the relative $L^2$ error, and the solution profiles at temporal snapshots $t=0, 0.5, 1$. The solution profiles $u(t, s\mathbf{1}_d), s\in [-3,3]$ are visualized.}
  \label{fig:up-down6}
\end{figure}

Applying the Cole-Hopf transformation and the Feynman-Kac formula, the solution to \eqref{eq:HJB_Simp} is given by
\begin{equation}\label{eq:exactHJB}
u(t,x)=-\delta^{-1}\ln\Big(\mathbb{E}\left[\exp\big(-\delta\,\Phi\left(\mathbf{X}_T\right)\big)\right]\Big),
\end{equation}
where process $\mathbf{X}_T$ is defined as $\mathbf{X}_T=x+b(T-t)+\sqrt{2\delta}\,\sqrt{T-t}\,\xi$ with $\xi\sim \mathcal{N}\left(0,I_d\right)$. We construct the reference solution of equation \eqref{eq:HJB_Simp} by form \eqref{eq:exactHJB} and $10^6$ Monte Carlo samples.

We solve this equation in $d=1000$ dimension. To assess the proposed algorithm comprehensively, we consider the following three different setups, which include smooth and oscillatory terminal costs, as well as a wide range of diffusion and the nonlinear effects.
\begin{description}
  \item[HJB-1:] A baseline setup with $b=0$, $\delta=1$, and terminal cost $\Phi(x)=\ln\big(\frac{1}{2}\big(1+|x|^2\big)\big)$;
  \item[HJB-2:] Drift $b=\mathbf{1}_d$ and the same $\Phi(x)$ as in \textbf{HJB-1}, with two small diffusion strengths that make the problem convection-dominated
               \begin{itemize}
                 \item \textbf{HJB-2a}: $\delta = 0.1$;\quad
                       \textbf{HJB-2b}: $\delta = 0.01$.
               \end{itemize}
  \item[HJB-3:] Drift $b=\mathbf{1}_d$, $\delta=1$, and a oscillatory terminal cost $\Phi(x)=\bar{g}(x-\mathbf{1}_d)$, where $\bar{g}(x):=\frac{1}{d}\sum_{i=1}^d\{\sin(x_i-\frac{\pi}{2})
      +\sin((0.1\pi+x_{i}^{2})^{-1})\}$. Two sub-cases with small diffusion coefficient:
                \begin{itemize}
                  \item \textbf{HJB-3a}: $\delta = 0.1$;\quad
                        \textbf{HJB-3b}: $\delta = 0.01$.
                \end{itemize}
\end{description}
We design the DNN a feedforward architecture consisting of $4$ hidden layers with a uniform width of $256$ neurons, equipped with Layer Normalization and $\mathrm{SiLU}$ activation functions. To handle the non-linearity efficiently, we use Method ($III$), namely,
\begin{displaymath}
\mathrm{d}\mathbf{X}_\tau
=\Big(\mu(\tau,\mathbf{X}_\tau,u_n(\tau,\mathbf{X}_\tau),\nabla_x u_n(\tau,\mathbf{X}_\tau))
-\alpha_n \delta^2\nabla_x u_n(\tau,\mathbf{X}_\tau)\Big) \,\mathrm{d}\tau
+\sigma(\tau,\mathbf{X}_\tau)\,\mathrm{d}\mathbf{B}_\tau
\end{displaymath}
to generate paths. The numerical results are visualized in Figures \ref{fig:up-down6} and \ref{fig:up-down7}.

\begin{table}[!h]
\centering
\caption{Comparison of training GPU time costs and relative $L^2$ error in solving 1000D \textbf{HJB-1} and \textbf{HJB-2}.}
\vspace{0.15cm}
\label{tab:gputimeHJB12}
\small
\begin{tabular*}{\textwidth}{@{\extracolsep{\fill}} l c c c c c c c c @{}}
\toprule
 & \multicolumn{4}{c}{DeepMartNet} & \multicolumn{4}{c}{iSMART (Method ($III$))} \\
\cmidrule(lr){2-5} \cmidrule(lr){6-9}
 & $t=1.0$ & $t=0.5$ & $t=0.0$ & GPU time & $t=1.0$ & $t=0.5$ & $t=0.0$ & GPU time\\
\midrule
\textbf{HJB-1}   &2.4142e-02  &2.0258e-01  &2.4090e-01  & 575.31\,s &0.0000e+00  &3.3191e-03  &3.6725e-03  & 326.04\,s  \\

\textbf{HJB-2a} &4.9018e-02  &6.3294e-02  &1.3098e-01  & 550.20\,s &0.0000e+00  &1.5017e-03  &1.6151e-03  & 388.95\,s  \\

\textbf{HJB-2b} &5.0897e-02  &5.6847e-02  &1.0810e-01  & 581.36\,s &0.0000e+00  &1.2008e-03  &1.7271e-03  & 389.68\,s  \\

\bottomrule
\end{tabular*}
\end{table}

Figure~\ref{fig:up-down6} shows the performance of iSMART for \textbf{HJB-1} and \textbf{HJB-2}. iSMART delivers excellent agreement with the reference solutions across all the tests. This experiment shows that iSMART guarantees a steady performance over a wide range of diffusion strength, from diffusion dominant regime to convection dominant regime. We also compare iSMART with the DeepMartNet method described in \cite{Cai2026a} for this problem. The statistics in \eqref{tab:gputimeHJB12} shows iSMART achieves superior accuracy and efficiency. iSMART maintains a steady relative $L^2$ errors within $5\times 10^{-3}$ at different time layers. With the higher accuracy, iSMART costs only about $60\%$ computing time than that of DeepMartNet. The loss history in Figure~\ref{fig:up-down6} also indicates that the freezing-and-compensating strategy effectively stabilises the iteration and benefits the performance of iSMART.

\begin{figure}[!h]
  \centering
  \begin{minipage}{0.95\textwidth}
    \centering
    \includegraphics[width=\textwidth]{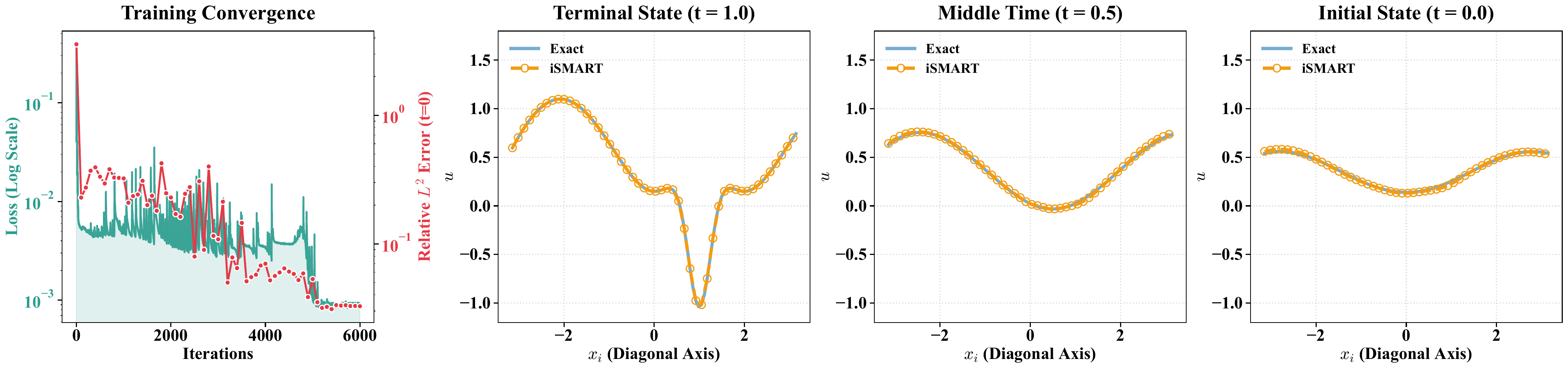}
    %\caption*{(b)}
  \end{minipage}
  \begin{minipage}{0.95\textwidth}
    \centering
    \includegraphics[width=\textwidth]{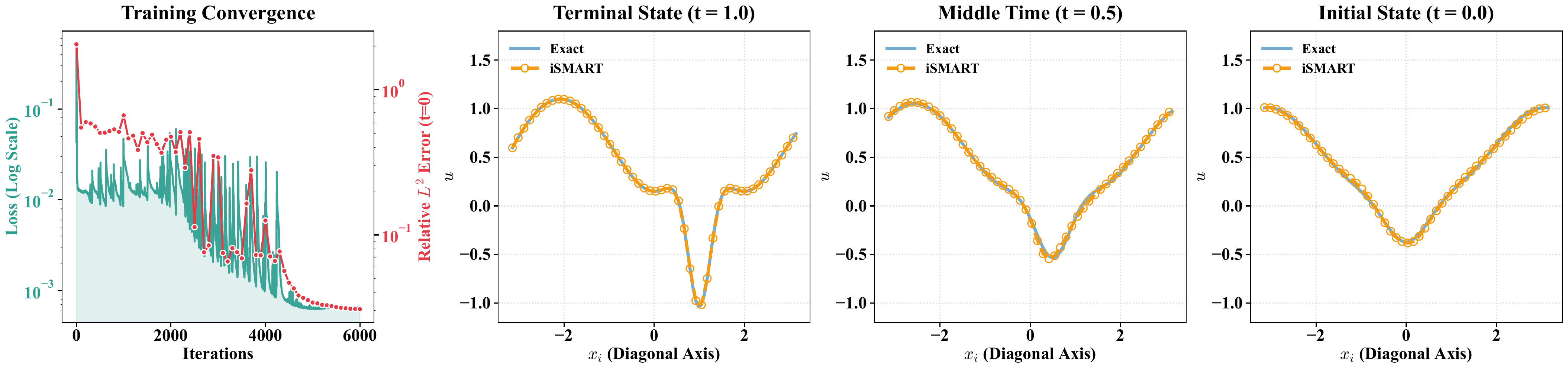}
    %\caption*{(a)}
  \end{minipage}
  \begin{minipage}{0.95\textwidth}
    \centering
    \includegraphics[width=\textwidth]{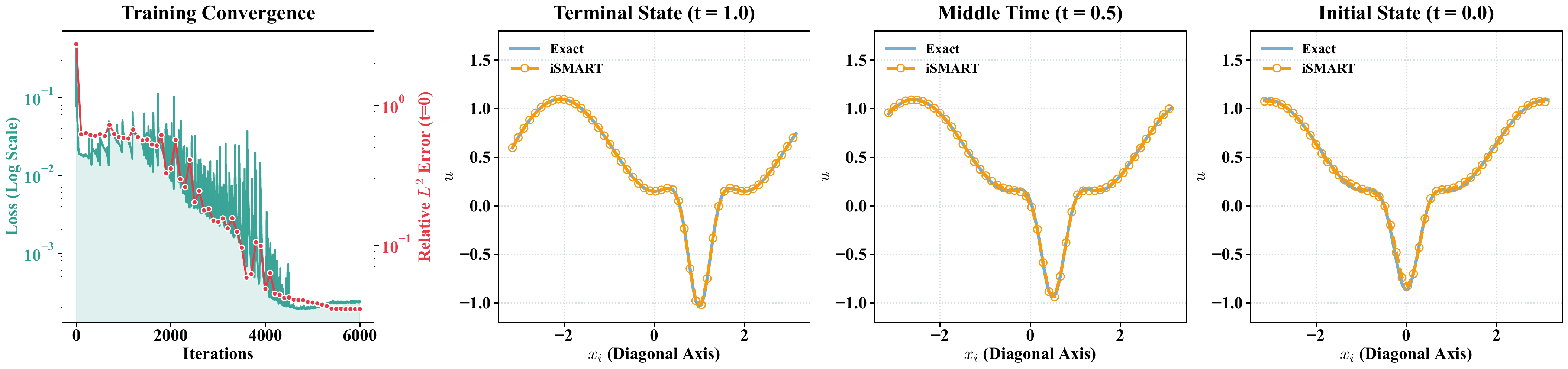}
    %\caption*{(a)}
  \end{minipage}
  \caption{Comparison between the iSMART numerical solutions and the exact solutions for the HJB equation \eqref{HJB-P} in the $1000$ dimensions. The first, second, and third rows correspond to \textbf{HJB-3}, \textbf{HJB-3-a}, and \textbf{HJB-3-b}, respectively, with paths generated by Method~($III$). In each row, the panels from left to right show the history of the training loss (in green line), the relative $L^2$ error, and the solution profiles at temporal snapshots $t=0, 0.5, 1$. The solution profiles $u(t, s\mathbf{1}_d), s\in [-3,3]$ are visualized.}
  \label{fig:up-down7}
\end{figure}

\begin{table}[!h]
\centering
\caption{Comparison of training GPU time costs and relative $L^2$ error in solving 1000D \textbf{HJB-3}.}
\vspace{0.15cm}
\label{tab:gputimeHJB3}
% 可选：缩小字号使表格更紧凑（若觉得列距太宽可开启）
\small
\begin{tabular*}{\textwidth}{@{\extracolsep{\fill}} l c c c c c c c c @{}}
\toprule
 & \multicolumn{4}{c}{DeepMartNet} & \multicolumn{4}{c}{iSMART (Method ($III$))} \\
\cmidrule(lr){2-5} \cmidrule(lr){6-9}
 & $t=1.0$ & $t=0.5$ & $t=0.0$ & GPU time & $t=1.0$ & $t=0.5$ & $t=0.0$ & GPU time\\
\midrule
\textbf{HJB-3}  &3.3255e-03  &4.6315e-01  &7.3706e-01 &4010.72\,s  &0.0000e+00   &2.0464e-02 &3.2436e-02  &1487.15\,s \\

\textbf{HJB-3a} &3.1578e-03  &1.6076e-01  &2.0934e-01 &4287.21\,s  &0.0000e+00   &3.5040e-02 &2.8771e-02  &1491.61\,s \\

\textbf{HJB-3b} &2.1585e-03  &2.5208e-01  &3.4905e-01 &4096.44\,s  &0.0000e+00   &1.7790e-02 &3.3120e-02  &1489.33\,s \\

\bottomrule
\end{tabular*}
\end{table}

For the most challenging case \textbf{HJB-3}, Figures~\ref{fig:up-down7} and Table ~\ref{tab:gputimeHJB3} illustrate that iSMART remains robust for different diffusion coefficients and gives a relative $L^2$ errors less than $5\%$ while the time requirement is only about $40\%$ of DeepMartNet. Overall, the consistently close match between the predicted and reference solutions across diverse suite of HJB equations highlights the accuracy, robustness, and wide applicability of the proposed iSMART method in nonlinear settings.

\section{Conclusion}
\label{Sec:Conclusion}

In this work, we present iSMART, a highly efficient iterative sampling-and-regression approach designed for solving high-dimensional PDEs admitting a martingale representation. The core novelty of our approach lies in reformulating the martingale representation as a sequence of least-squares regression problems, made possible by leveraging the $L^2$-projection property of conditional expectations. This perspective allows us to bypass nested Monte Carlo estimation entirely, replacing it with standard gradient-based optimization on discretized SDE trajectories.

iSMART applies uniformly to linear, semilinear, and fully nonlinear equations, and we provide a convergence analysis of the iterative scheme in a weighted Sobolev space. We discuss three distinct path-generation strategies within our framework. More significantly, we contribute a freezing-and-compensating technique specifically designed for fully nonlinear problems, which incorporates part of the gradient nonlinearity into the drift and substantially enhances the stability of the iteration. The framework also accommodates various spatio-temporal sampling strategies, further broadening its applicability.

The effectiveness of iSMART is demonstrated through extensive numerical experiments in up to $1000$ dimensions on a single NVIDIA RTX 4090 GPU. Across a variety of challenging regimes, our method maintains good accuracy and consistently outperforms DeepMartNet in terms of both relative error and computational cost. Importantly, the proposed framework is not confined to the specific class of equations considered here; it extends naturally to other problems whose solutions admit martingale representations, including partial integro-differential equations \cite{Andersson25,Georgoulis26},  elliptic problems, among others. The freezing-and-compensating technique developed in this work is likewise transferable to such settings. We believe that the simplicity, efficiency, and generality of iSMART make it a promising tool for high-dimensional PDEs arising in applications.

%+++++++++++++++++++++++++++++++++++++++++++++++++++++++++++++++++++++++++++++++++++++++++++++++++++
\section*{Code and Data Availability}
The code and data for the iSMART is available upon request and might be made publicly available upon publication.

%The source code for the iSMART is available on GitHub at \url{https://github.com/Fugui-Ma/iSMART}.

\section*{Acknowledges}
The authors acknowledge the support from National Key R\&D Program of China under grant 2021YFA1003301, the National Science Foundation of China under grant 12288101. F. Ma is partially supported by the Peking University Boya Postdoctoral Fellowship. We also thank the High-performance Computing Platform of Peking University for providing the computational resources for this research.

\section*{CRediT authorship contribution statement}
\textbf{Tiejun Li}: Writing – review \& editing, Methodology, Conceptualization; \textbf{Xiaoguang Li}: Writing – review \& editing, Methodology, Conceptualization. \textbf{Fugui Ma}: Writing-original draft, Software, Methodology.

\section*{Declaration of competing interest}
The authors declare that they have no known competing financial interests or personal relationships that could have appeared to influence the work reported in this paper.

%% Labels are used to cross-reference an item using \ref command.
%% Use \subsubsection, \paragraph, \subparagraph commands to
%% start 3rd, 4th and 5th level sections.
%% Refer following link for more details.
%% https://en.wikibooks.org/wiki/LaTeX/Document_Structure#Sectioning_commands

%% The Appendices part is started with the command \appendix;
%% appendix sections are then done as normal sections

\appendix
\renewcommand{\thelemma}{\Alph{section}.\arabic{lemma}}   % 重新定义，此时 \Alph{section} 输出 A, B...
\setcounter{lemma}{0}
\section{Auxiliary Lemmas and Their Proofs}\label{Lems:ALTF}

The proof of the main theorem \ref{thm:converge} rests on four successive estimates. Lemma~\ref{lem:boundofgu} ensures uniform regularity of the iterates via parabolic Gaussian bounds: $|\nabla_x u_n|\le M$, $|\nabla_x^2 u_n|\le \frac{M}{\sqrt{T-t}}$, which provides the Lipschitz constants needed for subsequent stochastic estimates. Lemma~\ref{lem:moment} gives the moment growth $\mathbb{E}[\sup|\mathbf{X}^u|]\le C(1+|x|)$ for the process driven by $u$, enabling expectation bounds for nonlinear terms. Lemmas~\ref{lem:tail} and \ref{lem:head} together yield Lemma~\ref{lem:X}, the key probabilistic sensitivity estimate $\mathbb{E}\left[\big|\mathbf{X}_s^u-\mathbf{X}_s^v\big|\right]
\le \big(C/\sqrt{\beta}\big)\,\|u-v\|_\beta e^{\beta(T-t)}(1+|x|)$.
Finally, Lemma~\ref{lem:gradientE} derives the analytic error contraction $|\nabla_x e_{n+1}|_\beta \le \frac{C}{\sqrt{\beta}}\,\|e_n\|_\beta$. Combining this with a companion estimate for the full function error (which uses Lemma~\ref{lem:X}) gives $\|e_{n+1}\|_\beta\le\frac{C}{\sqrt{\beta}}\,\|e_n\|_\beta$. Choosing $\beta$ sufficiently large makes the contraction factor $C/\sqrt{\beta}<1$, ensuring geometric convergence and completing the main proof. The detailed proofs of the auxiliary lemmas are given below.

\begin{lemma}\label{lem:boundofgu}
Assuming the initial function $u_0(t,x)\in\mathbb{X}_{\beta}$ satisfies
\begin{displaymath}
\lvert\nabla_{x} u_0(t,x)\rvert\le A_0 e^{\beta_0(T-t)},\qquad
\lvert\nabla^2_{x} u_0(t,x)\rvert \le A_0 e^{\beta_0(T-t)}/\sqrt{T-t}
\end{displaymath}
for some $A_0$,  $\beta_0>0$. The terminal condition $u_n(T,x)=g(x)$ holds for all $n=0,1,2\cdots$. Then $u_n\in\mathbb{X}_\beta$ and there is a constant $M$ such that
\begin{displaymath}
  \big\lvert{\nabla_{x}} u_n(t,x)\big\rvert \le M,\qquad
  \big\lvert\nabla^2_{x} u_n(t,x)\big\rvert \le M/\sqrt{T-t},  \quad n\geq0,~t\in[0,T).
\end{displaymath}
\end{lemma}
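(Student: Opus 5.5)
We argue by induction on $n$, working with the PDE form of one step of the iteration rather than the probabilistic representation. Given $u_n$, the iterate $u_{n+1}$ in \eqref{eq:picard1} (equivalently \eqref{eq:iSMARTLo}; with the terminal condition imposed the two coincide) is, by the Feynman--Kac formula, the classical solution of the \emph{linear} terminal-value problem
\begin{displaymath}
\partial_t w+\mu_n^\top\nabla_x w+\tfrac12\mathrm{Tr}\{a_n\nabla_x^2 w\}=F_n,\qquad w(T,\cdot)=g,
\end{displaymath}
with frozen coefficients $\mu_n(t,x)=\mu(t,x,u_n,\nabla_x u_n)$, $a_n=\sigma\sigma^\top(t,x,u_n)$ and source $F_n(t,x)=f(t,x,u_n,\nabla_x u_n)$. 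By \textbf{A1}--\textbf{A2}, $\mu_n$ is bounded by $M$ and $a_n$ is uniformly elliptic with constant $\lambda$, both independently of $n$. Regularity of the coefficients in $x$ comes from the induction hypothesis: $|\nabla_x u_n|$ is bounded and $|\nabla_x^2 u_n|$ is at most of order $1/\sqrt{T-t}$. Hence $a_n$ is Lipschitz, hence H\"older, in $x$ uniformly in $n$, and $\mu_n$ is bounded. This is exactly the setting of the classical parametrix construction (Friedman \cite{Friedman64}). It yields a fundamental solution $\Gamma_n(t,x;\tau,y)$ with Gaussian bounds
\begin{displaymath}
|\nabla_x^k\Gamma_n|\le C(T-\tau... )\quad\text{more precisely}\quad |\nabla_x^k\Gamma_n(t,x;\tau,y)|\le C(\tau-t)^{-(d+k)/2}e^{-c|x-y|^2/(\tau-t)},\quad k=0,1,
\end{displaymath}
and the constants $C,c$ depend only on $\lambda,L,M,T$ and the H\"older norm of $a_n$. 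The key point is that this H\"older norm is uniform in $n$.

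We then represent
\begin{displaymath}
u_{n+1}(t,x)=\int\Gamma_n(t,x;T,y)g(y)\,dy-\int_t^T\!\!\int\Gamma_n(t,x;\tau,y)F_n(\tau,y)\,dy\,d\tau
\end{displaymath}
and estimate the two terms in turn.

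\textbf{Gradient bound.} For the terminal term, differentiate in $x$ and use that $g$ is Lipschitz (\textbf{A4}). The standard move is to transfer the derivative onto $g$, or to subtract $g(x)$ and use $\int\nabla_x\Gamma_n\,dy\approx0$ up to an integrable error. This gives a bound $C L$. For the source term, \textbf{A3} and the induction hypothesis give $|F_n|\le C(1+|y|)$. Combined with $|\nabla_x\Gamma_n|\lesssim(\tau-t)^{-1/2}\times$Gaussian, the time integral $\int_t^T(\tau-t)^{-1/2}\,d\tau\le 2\sqrt T$ is finite. A factor $(1+|x|)$ appears, which is harmless for membership in $\mathbb{X}_\beta$. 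If $f$ is bounded in $x$ it gives a uniform bound; otherwise one works with the weighted version of $M$.

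\textbf{Hessian bound.} Take a second derivative. For the terminal term, $\nabla_x^2\int\Gamma_n g$ with $g$ Lipschitz gives $C/\sqrt{T-t}$, by moving one derivative to $g$ and using $|\nabla_x\Gamma_n|\lesssim(T-t)^{-1/2}$. For the source term, $F_n$ is Lipschitz in $x$ with constant $C(1+M+M/\sqrt{T-\tau})$; here $\nabla_x^2u_n$ enters through $\nabla_x(\nabla_x u_n)$. The standard potential estimate $\nabla_x^2\int\!\!\int\Gamma_nF_n$ uses $|\nabla_x^2\Gamma_n|\lesssim(\tau-t)^{-1}$ together with the H\"older continuity of $F_n$. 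This produces
\begin{displaymath}
\int_t^T(\tau-t)^{-1/2}\bigl(1+(T-\tau)^{-1/2}\bigr)\,d\tau\le C,
\end{displaymath}
a Beta-integral, hence a bound $C\le C/\sqrt{T-t}$.

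\textbf{Main obstacle: closing the induction.} The constants must not grow with $n$. The estimates above have the form $M_{n+1}\le C_0+C_1\,\theta(T)\,M_n$, where the Hessian of $u_n$ feeds into the H\"older norm of the coefficients and the Lipschitz constant of the source. To prevent blow-up we use the exponential weight exactly as in the norm \eqref{eq:weightednorm}. We prove instead
\begin{displaymath}
|\nabla_xu_n|\le Ae^{\beta(T-t)},\qquad |\nabla_x^2u_n|\le Ae^{\beta(T-t)}/\sqrt{T-t},
\end{displaymath}
starting from the hypothesis on $u_0$ with $\beta\ge\beta_0$, $A\ge A_0$. The $u_n$-dependent contribution then picks up a factor $\int_t^T e^{\beta(T-\tau)}(\tau-t)^{-1/2}\cdots d\tau\le C\beta^{-1/2}e^{\beta(T-t)}$. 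Choosing $\beta$ large makes $C\beta^{-1/2}A\le A/2$, and choosing $A\ge 2C_0$ closes the induction. The bound $M:=Ae^{\beta T}$ is then uniform in $n$ and $t$.

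The delicate point is that the parametrix constants depend on the H\"older modulus of $a_n$, which depends on $A$. We handle this by noting that only the Lipschitz constant of $\sigma$ in $y$ times $|\nabla_x u_n|$ enters, and by fixing $A$ first, then $\beta$. Finally, membership $u_n\in\mathbb{X}_\beta$ follows from the linear growth of $u_{n+1}$, which is inherited from $g\in\mathbb{X}_{\beta,1}$ and $|F_n|\le C(1+|x|)$ via Lemma~\ref{lem:moment}-type Gaussian moment bounds.
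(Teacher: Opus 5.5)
Your overall architecture matches the paper's. You freeze the coefficients, represent $u_{n+1}$ through the fundamental solution $\Gamma_n$, estimate gradient and Hessian, and close the induction with the weight $e^{\beta(T-t)}$ and $\beta$ large. However, your gradient step does not prove the statement. For the source potential you bound $|F_n(\tau,y)|\le C(1+|y|)$ and integrate against $|\nabla_x\Gamma_n|$. Since $u_n$ itself grows linearly, $F_n$ grows linearly in general, so this route only yields $|\nabla_x u_{n+1}(t,x)|\le C(1+|x|)$, as you concede. The lemma asserts $|\nabla_x u_n|\le M$ uniformly in $x$, and your own induction needs exactly that. Two of your steps require $\sup_x|\nabla_x u_n|<\infty$: the uniform Lipschitz (hence H\"older) control of $a_n=\sigma\sigma^\top(t,x,u_n)$ that makes the parametrix constants $n$-independent, and the Lipschitz bound $C(1+M+M/\sqrt{T-\tau})$ on $F_n$ used in your Hessian step. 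With only linear growth the induction does not close. The later uses of the lemma also fail, e.g.\ $|u_n(t+h,\mathbf{X}^n)-u_n(t+h,\mathbf{X}^{n-1})|\le M|\mathbf{X}^n-\mathbf{X}^{n-1}|$ in the proof of the main theorem.

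The missing idea is the cancellation you already use for the terminal term and implicitly in the Hessian potential estimate. Since $\int_{\mathbb{R}^d}\Gamma_n(t,x;\tau,y)\,dy=1$ for every $x$, we have $\int\nabla_x\Gamma_n\,dy=0$ exactly. So $F_n(\tau,y)$ may be replaced by $F_n(\tau,y)-F_n(\tau,x)$, which is bounded by $L\bigl(1+G_n(\tau)+H_n(\tau)\bigr)|x-y|$, where $G_n=\sup_x|\nabla_x u_n|$ and $H_n=\sup_x|\nabla_x^2 u_n|$. The factor $|x-y|$ absorbs the $(\tau-t)^{-1/2}$ singularity of $\nabla_x\Gamma_n$. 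This gives $G_{n+1}(t)\le C_1+C_1\int_t^T(G_n+H_n)\,d\tau$ with no $x$-dependence. Together with the analogous Hessian inequality, the paper closes this under the weighted hypothesis $G_n+H_n\le Ae^{\beta(T-t)}/\sqrt{T-t}$.

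Two smaller points:
\begin{itemize}
\item For variable coefficients you cannot move a derivative onto $g$, since $\nabla_x\Gamma=-\nabla_y\Gamma$ holds only for translation-invariant kernels. The terminal Hessian should be handled by subtracting $g(x)$ and using the $k=2$ Gaussian bound, which you use but did not list.
\item Fixing $A$ before $\beta$ does not remove the dependence of the parametrix constants on the H\"older norm of $a_n$. That norm is controlled by roughly $Ae^{\beta T}$, so it grows with $\beta$. The paper instead simply takes the Gaussian constants to depend only on structural constants.
\end{itemize}
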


\begin{proof}
For a given $u_{n-1}(t,x)$, define the coefficients $\mu_{n-1}(t,x)=\mu\bigl(t,x,u_{n-1}(t,x),\nabla_x u_{n-1}(t,x)\bigr)$, $\sigma_{n-1}(t,x)=\sigma\bigl(t,x,u_{n-1}(t,x)\bigr)$, and $f_{n-1}(t,x)=f\bigl(t,x,u_{n-1}(t,x),\nabla_x u_{n-1}(t,x)\bigr)$. Then $u_n(t,x)$ satisfies the linear parabolic equation
\begin{equation}\label{eq:linparaPDEn}
\partial_t u_n + \mu_{n-1}(t,x)\cdot\nabla_x u_n
+ \frac{1}{2}\operatorname{Tr}\Bigl(\sigma_{n-1}\sigma_{n-1}^{\top}(t,x)\nabla^2_x u_n\Bigr)
= f_{n-1}(t,x),
\end{equation}
with terminal condition $u_n(T,x)=g(x)$.

Let $\Gamma_n(t,x,s,y)$ be the fundamental solution of \eqref{eq:linparaPDEn}.  The solution admits the representation
\begin{equation}\label{eq:Green}
u_n(t,x) = \int_{\mathbb{R}^d}\Gamma_n(t,x,T,y)\,g(y)\,\mathrm{d}y
         + \int_{t}^{T}\mathrm{d}\tau \int_{\mathbb{R}^d}\Gamma_n(t,x,\tau,y)\,f_{n-1}(\tau,y)\,\mathrm{d}y=:I+II.
\end{equation}
For every $n\ge0$, $\Gamma_n$ satisfies
\begin{equation}\label{eq:uniform}
\int_{\mathbb{R}^d}\Gamma_n(t,x,s,y)\,\mathrm{d}y = 1
\end{equation}
and the Gaussian estimate \cite{Friedman64}
\begin{equation}\label{eq:GE0}
\big\lvert\Gamma_n(t,x,s,y)\big\rvert
\le \frac{C}{(s-t)^{d/2}}\exp\left\{-\frac{c\,\lvert x-y\rvert^{2}}{s-t}\right\},
\end{equation}
with positive constants $C$ and $c$ depending only on $\lambda$. Combining the Gaussian estimate \eqref{eq:GE0} with the linear growth of $g$ and $f_{n-1}$ yields
\begin{displaymath}
\begin{aligned}
|\,I\,|=\left\lvert\int_{\mathbb{R}^d}\Gamma_n(t,x,T,y)g(y)\,\mathrm{d}y\right\rvert
\le C\int_{\mathbb{R}^d}\frac{\mathrm{d}y}{(T-t)^{d/2}}
\exp\left\{-\frac{c\lvert x-y\rvert^{2}}{T-t}\right\}
\Bigl(1+\lvert x\rvert+\lvert x-y\rvert\Bigr)
\le CZ\Bigl(1+\lvert x\rvert\Bigr)+2CZ\sqrt{T}
\end{aligned}
\end{displaymath}
and
\begin{displaymath}
\begin{aligned}
|\,II\,|=\left\lvert\int_{t}^{T}\mathrm{d}\tau\int_{\mathbb{R}^d}\Gamma_n(t,x,\tau,y)f_{n-1}(\tau,y)\,\mathrm{d}y\right\rvert
&\le C\int_{t}^{T}\!\mathrm{d}\tau\int_{\mathbb{R}^d}\frac{\mathrm{d}y}{(\tau-t)^{d/2}}
\exp\left\{-\frac{c\lvert x-y\rvert^{2}}{\tau-t}\right\}
\Bigl(1+\lvert x\rvert+\lvert x-y\rvert\Bigr)\\
&\le CZT\Bigl(1+\lvert x\rvert\Bigr) + 2CZ\sqrt{T},
\end{aligned}
\end{displaymath}
with $Z:= \int_{\mathbb{R}^d}\lvert z\rvert\exp\{-c\lvert z\rvert^{2}\}\,\mathrm{d}z$ being a finite constant depending only on $\lambda$.  Consequently, $\lvert u_n(t,x)\rvert \le C(1+\lvert x\rvert)
\le C(1+\lvert x\rvert)e^{\beta(T-t)}$, which implies $\lvert u_n\rvert_{\beta}\le C$. Furthermore, the second-order differentiability of $u_n$ follows from classical parabolic theory \cite{Friedman64}. The gradient and Hessian of the fundamental solution admit the Gaussian estimates
\begin{displaymath}
\begin{aligned}
\big\lvert\nabla_{x}\Gamma_n(t,x,s,y)\big\rvert
\le\frac{C}{(s-t)^{(d+1)/2}}\exp\left\{-\frac{c\lvert x-y\rvert^{2}}{s-t}\right\},\quad
\big\lvert\nabla^{2}_{x}\Gamma_n(t,x,s,y)\big\rvert
\le\frac{C}{(s-t)^{(d+2)/2}}\exp\left\{-\frac{c\lvert x-y\rvert^{2}}{s-t}\right\}.
\end{aligned}
\end{displaymath}

Based on the above estimates, on one hand, by differentiating under the integral sign and leveraging  \eqref{eq:uniform}, we obtain
$\int_{\mathbb{R}^d}\nabla_{x}\Gamma_n(t,x,s,y)\,\mathrm{d}y=0$. By exploiting the Lipschitz continuity of $g$, we deduce
\begin{displaymath}
\begin{aligned}
\left\lvert\int_{\mathbb{R}^d}\nabla_{x}\Gamma_n(t,x,T,y)g(y)\,\mathrm{d}y\right\rvert
&= \left\lvert\int_{\mathbb{R}^d}\nabla_{x}\Gamma_n(t,x,T,y)\bigl(g(y)-g(x)\bigr)\,\mathrm{d}y\right\rvert
\le L\int_{\mathbb{R}^d}\big\lvert\nabla_{x}\Gamma_n(t,x,T,y)\big\rvert\,\big\lvert x-y\big\rvert\,\mathrm{d}y \\
&\le LC\int_{\mathbb{R}^d}\frac{\lvert x-y\rvert}{(T-t)^{(d+1)/2}}
\exp\left\{-\frac{c\lvert x-y\rvert^{2}}{T-t}\right\}\,\mathrm{d}y .
\end{aligned}
\end{displaymath}
Through a change of variables, we then arrive at
\begin{equation}\label{eq:homo}
\left\lvert\int_{\mathbb{R}^d}\nabla_{x}\Gamma_n(t,x,T,y)g(y)\,\mathrm{d}y\right\rvert \le LCZ.
\end{equation}
On the other hand, we obtain
\begin{equation}\label{eq:rhs}
\begin{aligned}
\Bigg\lvert&\int_{t}^{T}\!\mathrm{d}\tau\int_{\mathbb{R}^d}\nabla_{x}
  \Gamma_n(t,x,\tau,y)f_n(\tau,y)\,\mathrm{d}y\Bigg\rvert \\
&\le \int_{t}^{T}\!\mathrm{d}\tau\int_{\mathbb{R}^d}
\big\lvert\nabla_{x}\Gamma_n(t,x,\tau,y)\big\rvert\,
\bigl\lvert f\bigl(\tau,y,u_{n-1}(\tau,y),\nabla_x u_{n-1}(\tau,y)\bigr)
- f\bigl(\tau,x,u_{n-1}(\tau,x),\nabla_x u_{n-1}(\tau,x)\bigr)\bigr\rvert\,\mathrm{d}y \\
&\le LC\int_{t}^{T}\!\mathrm{d}\tau\int_{\mathbb{R}^d}
\frac{\mathrm{d}y}{(\tau-t)^{(d+1)/2}}
\left(\big\lvert x-y\big\rvert + \big\lvert u_{n-1}(\tau,x)-u_{n-1}(\tau,y)\big\rvert+ \big\lvert\nabla_x u_{n-1}(\tau,x)-\nabla_x u_{n-1}(\tau,y)\big\rvert\right)
\exp\left\{-\frac{c\lvert x-y\rvert^{2}}{\tau-t}\right\}.
\end{aligned}
\end{equation}
Applying the mean-value theorem, we have
\begin{displaymath}
\begin{aligned}
\big\lvert u_{n-1}(\tau,x)-u_{n-1}(\tau,y)\big\rvert
\le \sup_{x\in\mathbb{R}^d}\big\lvert\nabla_x u_{n-1}(\tau,x)\big\rvert\,\big\lvert x-y\big\rvert,\quad
\big\lvert\nabla_x u_{n-1}(\tau,x)-\nabla_x u_{n-1}(\tau,y)\big\rvert
\le \sup_{x\in\mathbb{R}^d}\big\lvert\nabla^{2}_x u_{n-1}(\tau,x)\big\rvert\,\big\lvert x-y\big\rvert.
\end{aligned}
\end{displaymath}
We define $G_n(t):=\sup_{x\in\mathbb{R}^d}\big\lvert\nabla_{x} u_n(t,x)\big\rvert$ and $H_n(t):= \sup_{x\in\mathbb{R}^d}\big\lvert\nabla^{2}_{x} u_n(t,x)\big\rvert$. Combining the representation \eqref{eq:Green} with the estimates \eqref{eq:homo} and \eqref{eq:rhs}, we then obtain
\begin{equation}\label{eq:Gn}
G_n(t) \le C_1 + C_1\int_{t}^{T}\bigl(G_{n-1}(\tau) + H_{n-1}(\tau)\bigr)\,\mathrm{d}\tau,
\end{equation}
where $C_1$ is a generic constant independent of $n$.

A similar argument yields the estimate for $\nabla^{2}_{x}u_n$.  For the homogeneous part $I$, we deduce
\begin{equation}\label{eq:homo2}
\begin{aligned}
\left\lvert\int_{\mathbb{R}^d}\nabla^{2}_{x}\Gamma_n(t,x,T,y)g(y)\,\mathrm{d}y\right\rvert
&= \left\lvert\int_{\mathbb{R}^d}\nabla^{2}_{x}\Gamma_n(t,x,T,y)\bigl(g(y)-g(x)\bigr)\,\mathrm{d}y\right\rvert
\le L\int_{\mathbb{R}^d}\big\lvert\nabla^{2}_{x}\Gamma_n(t,x,T,y)\big\rvert\,\lvert x-y\rvert\,\mathrm{d}y \\
&\le LC\int_{\mathbb{R}^d}\frac{\lvert x-y\rvert}{(T-t)^{(d+2)/2}}
\exp\left\{-\frac{c\lvert x-y\rvert^{2}}{T-t}\right\}\,\mathrm{d}y
\le \frac{LCZ}{\sqrt{T-t}} .
\end{aligned}
\end{equation}
For the inhomogeneous part $II$, we also get
\begin{equation}\label{eq:inhomo2}
\begin{aligned}
\Bigl\lvert&\int_{t}^{T}\!\mathrm{d}\tau\int_{\mathbb{R}^d}\nabla^{2}_{x}\Gamma_n(t,x,\tau,y)f_n(\tau,y)\,\mathrm{d}y\Bigr\rvert \\
&\le \int_{t}^{T}\mathrm{d}\tau\int_{\mathbb{R}^d}
\big\lvert\nabla^{2}_{x}\Gamma_n(t,x,\tau,y)\big\rvert\,
\bigl\lvert f\bigl(\tau,y,u_{n-1}(\tau,y),\nabla_x u_{n-1}(\tau,y)\bigr)
- f\bigl(\tau,x,u_{n-1}(\tau,x),\nabla_x u_{n-1}(\tau,x)\bigr)\bigr\rvert\,\mathrm{d}y \\
&\le LC\int_{t}^{T}\mathrm{d}\tau\int_{\mathbb{R}^d}
\frac{\mathrm{d}y}{(\tau-t)^{(d+2)/2}}
\Bigl(\big\lvert x-y\big\rvert+\big\lvert u_{n-1}(\tau,x)-u_{n-1}(\tau,y)\big\rvert +\big\lvert\nabla_x u_{n-1}(\tau,x)-\nabla_x u_{n-1}(\tau,y)\big\rvert\Bigr)
\exp\left\{-\frac{c\lvert x-y\rvert^{2}}{\tau-t}\right\} \\
&\le 2LCZ\sqrt{T} + LCZ\int_{t}^{T}\frac{1}{\sqrt{\tau-t}}
\bigl(G_{n-1}(\tau) + H_{n-1}(\tau)\bigr)\,\mathrm{d}\tau .
\end{aligned}
\end{equation}
Consequently, there exists a constant $C_2>0$ such that
\begin{equation}\label{eq:Hn}
H_n(t) \le \frac{C_2}{\sqrt{T-t}} + C_2\int_{t}^{T}\frac{1}{\sqrt{\tau-t}}
\Bigl(G_{n-1}(\tau) + H_{n-1}(\tau)\Bigr)\,\mathrm{d}\tau .
\end{equation}

Now assume that for some $A$, $\beta>0$, $G_{n-1}(t)+H_{n-1}(t)\le\frac{A e^{\beta(T-t)}}{\sqrt{T-t}}$. We shall prove that $G_n+H_n$ satisfies the same bound. Adding \eqref{eq:Gn} and \eqref{eq:Hn} and inserting the induction hypothesis, we obtain
\begin{displaymath}
G_n(t)+H_n(t) \le \frac{C}{\sqrt{T-t}} + C A e^{\beta(T-t)}
\int_{t}^{T}\frac{e^{-\beta(\tau-t)}\,\mathrm{d}\tau}{\sqrt{(\tau-t)(T-\tau)}} .
\end{displaymath}
By setting $\frac{\tau-t}{T-t}=\sin^{2}\theta$, a direct calculation yields
\begin{displaymath}
\begin{aligned}
\int_{t}^{T}\frac{e^{-\beta(\tau-t)}\,\mathrm{d}\tau}{\sqrt{(\tau-t)(T-\tau)}}
= \int_{0}^{\frac{\pi}{2}} 2 e^{-\beta(T-t)\sin^{2}\theta}\,\mathrm{d}\theta
\le \int_{0}^{\frac{\pi}{2}} 2 e^{-4\beta(T-t)\theta^{2}/\pi^{2}}\,\mathrm{d}\theta
\le \frac{\pi}{\sqrt{\beta(T-t)}}\int_{0}^{+\infty}e^{-\phi^{2}}\,\mathrm{d}\phi
\le \frac{C}{\sqrt{\beta(T-t)}},
\end{aligned}
\end{displaymath}
where the elementary inequality $\sin\theta \ge \frac{2}{\pi}\theta$ for $\theta\in[0,\pi/2]$ has been utilized, and the last inequality follows from the substitution $\phi = \frac{2\sqrt{\beta(T-t)}}{\pi}\theta$ together with the obvious bound $\int_{0}^{\sqrt{\beta(T-t)}}e^{-\phi^{2}}\,\mathrm{d}\phi \le \int_{0}^{\infty}e^{-\phi^{2}}\,\mathrm{d}\phi$. Thus, we have
\begin{equation}\label{eq:G+H}
G_n(t)+H_n(t) \le \frac{A e^{\beta(T-t)}}{\sqrt{T-t}}
\left(\frac{C}{A} + \frac{C}{\sqrt{\beta}}\right).
\end{equation}
We may now select $A>\max\{2C,A_0\}$ and $\beta >\max\{4C^{2},\beta_0\}$ such that
$\frac{C}{A}+\frac{C}{\sqrt{\beta}}<1$, which implies $G_n(t) + H_n(t) \le \frac{A e^{\beta(T-t)}}{\sqrt{T-t}}$.

Since the estimate holds for $n=0$ by assumption, induction ensures the bound is valid for all $n\ge0$. Finally, by inserting the uniform bound on $G_n+H_n$ provided in \eqref{eq:G+H} back into \eqref{eq:Gn} and \eqref{eq:Hn}, we obtain the desired pointwise estimates
\begin{displaymath}
\big\lvert\nabla_{x} u_n(t,x)\big\rvert
\le G_n(t) \le C_1 + C_1 A \int_{t}^{T}\frac{e^{\beta(T-\tau)}}{\sqrt{T-\tau}}\,\mathrm{d}\tau
\le C_1 + 2C_1\sqrt{T}A e^{\beta T}
\end{displaymath}
and
\begin{displaymath}
\big\lvert\nabla^{2}_{x} u_n(t,x)\big\rvert
\le H_n(t) \le \frac{C_2}{\sqrt{T-t}} + C_2 A \int_{t}^{T}\frac{e^{\beta(T-\tau)}}{\sqrt{(\tau-t)(T-\tau)}}\,\mathrm{d}\tau
\le \frac{C_2}{\sqrt{T-t}} + \pi C_2 A e^{\beta T} .
\end{displaymath}
The fact that $u_n\in\mathbb{X}_{\beta}$ follows from the bound on $\lvert u_n\rvert_{\beta}$ and the uniform boundedness of $\lvert\nabla u_n\rvert$.
\end{proof}

For any $u(\mathbf{x})\in\mathbb{X}_\beta$, let us define a process $\mathbf{X}_t^u$ by
\begin{equation}\label{eq:dXtu}
  \mathrm{d}\mathbf{X}_t^u = \mu\big(t,\mathbf{X}_t^u, u(t,\mathbf{X}_t^u), \nabla_x u(t,\mathbf{X}_t^u)\big)\mathrm{d}t + \sigma\big(t,\mathbf{X}_t^u,u(t,\mathbf{X}_t^u)\big)\mathrm{d}\mathbf{B}_t.
\end{equation}
Assumption \textbf{A1}, \textbf{A2} and the global Lipschitz condition of $u$ ensures the existence and uniqueness of a strong solution to \eqref{eq:dXtu} for any initial value $\textbf{X}_0=x$. Moreover, we can establish a moment estimation of $\textbf{X}^u_t$.

\begin{lemma}\label{lem:moment}
For any $u\in\mathbb{X}_\beta$ with $|\nabla_{x}u|< M$, there exists a constant $C>0$ such that for any $(t,x)\in\mathcal{D}$ and $s\in[t,T]$,
\begin{equation}\label{eq:moment}
\mathbb{E}^{t,x}\left[\big|\mathbf{X}_s^u\big|\right] \leq \mathbb{E}^{t,x}\Big[\sup_{s\in [t,T]}\big|\mathbf{X}_s^u\big|\Big]
\leq C\Big(1+|x|\Big).
\end{equation}
Consequently, for any $b(t,x)\in\mathbb{X}_\beta$, $\mathbb{E}^{t,x}\big[|b(s,\mathbf{X}_s^u)|\big]\leq C|b|_\beta\, e^{\beta(T-s)}(1+|x|)$.
\end{lemma}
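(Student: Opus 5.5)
We will prove the moment bound \eqref{eq:moment} directly from the integral form of \eqref{eq:dXtu}, exploiting that the drift is uniformly bounded by Assumption~\textbf{A1} and that the diffusion has at most linear growth. Write, for $s\in[t,T]$,
\begin{displaymath}
\mathbf{X}_s^u = x + \int_t^s \mu\big(\tau,\mathbf{X}_\tau^u,u(\tau,\mathbf{X}_\tau^u),\nabla_x u(\tau,\mathbf{X}_\tau^u)\big)\,\mathrm{d}\tau + \int_t^s \sigma\big(\tau,\mathbf{X}_\tau^u,u(\tau,\mathbf{X}_\tau^u)\big)\,\mathrm{d}\mathbf{B}_\tau .
\end{displaymath}
The drift integral is bounded pathwise by $M(T-t)\le MT$. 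For the diffusion, Assumption~\textbf{A2} gives $|\sigma(\tau,y,z)|\le |\sigma(\tau,0,0)|+L(|y|+|z|)$. The uniform ellipticity bound $\xi^\top\sigma\sigma^\top\xi\le\lambda|\xi|^2$ actually gives $|\sigma|\le\sqrt{d\lambda}$ directly in Frobenius norm. To keep the constant free of $d$, however, I would use the Lipschitz route. Since $u\in\mathbb{X}_\beta$, we have $|u(\tau,y)|\le |u|_\beta e^{\beta T}(1+|y|)$, so $|\sigma(\tau,y,u(\tau,y))|\le C(1+|y|)$.

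Next we bound the supremum. We use the elementary inequality $\sup|\mathbf{X}|^2\le 3\big(|x|^2+M^2T^2+\sup|\int\sigma\,\mathrm{d}\mathbf{B}|^2\big)$, take expectations, and apply the Burkholder--Davis--Gundy (or Doob) inequality to the stochastic integral. With $\phi(s):=\mathbb{E}^{t,x}[\sup_{r\in[t,s]}|\mathbf{X}_r^u|^2]$, this yields
\begin{displaymath}
\phi(s)\le C(1+|x|^2) + C\int_t^s \phi(\tau)\,\mathrm{d}\tau .
\end{displaymath}
Gronwall's lemma then gives $\phi(T)\le C(1+|x|^2)$, and Jensen's inequality gives $\mathbb{E}^{t,x}[\sup|\mathbf{X}^u_s|]\le\sqrt{\phi(T)}\le C(1+|x|)$. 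The first inequality in \eqref{eq:moment} is trivial.

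The \textbf{main obstacle} is the a priori finiteness of $\phi$ needed before Gronwall can be applied. I would handle it by localization with stopping times $\tau_R=\inf\{s:|\mathbf{X}_s^u|\ge R\}$: run the same argument for the stopped process, obtain a bound uniform in $R$, and let $R\to\infty$ by Fatou's lemma. I also need to check that $C$ depends only on the admissible quantities. Here $|u|_\beta$ and $e^{\beta T}$ enter through the linear growth of $\sigma(\cdot,u)$. Lemma~\ref{lem:boundofgu} gives $|u_n|_\beta\le C$ uniformly in $n$, so I would state the dependence on these quantities explicitly.

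For the consequence, take $b\in\mathbb{X}_\beta$. By the definition of the seminorm, $|b(s,y)|\le |b|_\beta e^{\beta(T-s)}(1+|y|)$ for every $y$. Substituting $y=\mathbf{X}_s^u$ and taking expectations gives
\begin{displaymath}
\mathbb{E}^{t,x}\big[|b(s,\mathbf{X}_s^u)|\big]\le |b|_\beta e^{\beta(T-s)}\big(1+\mathbb{E}^{t,x}|\mathbf{X}_s^u|\big)\le C|b|_\beta e^{\beta(T-s)}(1+|x|),
\end{displaymath}
which is the claimed estimate.
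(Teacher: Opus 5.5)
Your proposal is correct and follows essentially the paper's route: bounded drift, linear growth of $\sigma(\cdot,u)$, an $L^2$ estimate via It\^o isometry/BDG closed by Gr\"onwall, then Jensen, and the identical one-line argument for $b$. It is in fact more complete than the paper, since you put the supremum inside the Gr\"onwall loop and localize to guarantee finiteness, both of which the paper omits. One caution: you derive the linear growth of $\sigma(\cdot,u)$ from $|u|_\beta e^{\beta T}$, which makes your $C$ blow up with $\beta$ and would wreck the later choice of large $\beta$ in Theorem~\ref{thm:converge}. Use $|\nabla_x u|<M$ instead, as the paper does (this hypothesis is never invoked in your argument), or simply the bound $|\sigma|\le\sqrt{d\lambda}$ you already noticed, which gives $\mathbb{E}^{t,x}[\sup_{s}|\mathbf{X}_s^u|]\le|x|+MT+2\sqrt{d\lambda T}$ with no Gr\"onwall at all. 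Your worry about $d$ is moot anyway, since the lower ellipticity bound already forces $|\sigma|\ge\sqrt{d/\lambda}$ in Frobenius norm.
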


\begin{proof}
For $s\in[t,T]$, it follows from the definition of $\mathbf{X}_s^u$ in \eqref{eq:dXtu} that
\begin{displaymath}
    \mathbf{X}_s^u = x
    + \int_{t}^{s}\mu\bigl(\tau,\mathbf{X}_\tau^u, u(\tau,\mathbf{X}_\tau^u), \nabla_{x} u(\tau,\mathbf{X}_\tau^u)\bigr)\mathrm{d}\tau
    + \int_{t}^{s}\sigma\bigl(\tau,\mathbf{X}_\tau^u,u(\tau,\mathbf{X}_\tau^u)\bigr)\mathrm{d}\mathbf{B}_\tau.
\end{displaymath}
Since $\mu$ is bounded (Assumption \textbf{A.1}), we have $\left|\int_{t}^{s}\mu\bigl(\tau,\mathbf{X}_\tau^u, u(\tau,\mathbf{X}_\tau^u), \nabla_{x} u(\tau,\mathbf{X}_\tau^u)\bigr)\mathrm{d}\tau\right|\le K$ for some constant $K$. Hence
\begin{displaymath}
\big|\mathbf{X}_s^u\big|\le |x|+K+ \left|\int_{t}^{s}\sigma\bigl(\tau,\mathbf{X}_\tau^u,u(\tau,\mathbf{X}_\tau^u)\bigr)\mathrm{d}\mathbf{B}_\tau\right|.
\end{displaymath}
Taking squares and expectations, using $(a+b)^2\le 2a^2+2b^2$ and It\^o's isometry, we obtain
\begin{equation}\label{eq:xmean}
\begin{aligned}
\mathbb{E}^{t,x}\left[\big|\mathbf{X}_s^u\big|^2\right]
\le C\big(1+|x|\big)^2 + 2\mathbb{E}^{t,x}\left[\left|\int_{t}^{s}\sigma\bigl(\tau,\mathbf{X}_\tau^u,u(\tau,\mathbf{X}_\tau^u)\bigr)\mathrm{d}\mathbf{B}_\tau\right|^2\right]
&= C\big(1+|x|\big)^2 + 2\int_{t}^{s}\mathbb{E}^{t,x}\left[\Big|\sigma\bigl(\tau,\mathbf{X}_\tau^u,u(\tau,\mathbf{X}_\tau^u)\bigr)\Big|^2\mathrm{d}\tau\right].
\end{aligned}
\end{equation}
By the Lipschitz continuity of $\sigma$ in the last two arguments (Assumption \textbf{A.2}) and the bound $|\nabla u|<M$ from Lemma \ref{lem:boundofgu},
\begin{displaymath}
\big|\sigma\big(\tau,\mathbf{x},u(\tau,\mathbf{x})\big)\big|
\le \big|\sigma(\tau,0,u(\tau,0)\big)\big| + L\Bigl(|x| + \big|u(\tau,\mathbf{x})-u(\tau,0)\big|\Bigr)
\le C\big(1+|x|\big).
\end{displaymath}
Substituting this linear growth estimate into \eqref{eq:xmean} yields
\begin{displaymath}
\mathbb{E}^{t,x}\left[|\mathbf{X}_s^u|^2\right] \le C\big(1+|x|\big)^2 + C\int_{t}^{s}\mathbb{E}^{t,x}\left[|\mathbf{X}_\tau^u|^2\right]\mathrm{d}\tau.
\end{displaymath}
By Gr\"onwall's inequality, $\mathbb{E}^{t,x}\big[|\mathbf{X}_s^u|^2\big]\le C(1+|x|)^2$, which implies
$\mathbb{E}^{t,x}\left[|\mathbf{X}_s^u|\right]\le C(1+|x|)$. The estimate for the supremum follows by a standard application of Doob's maximal inequality (see \cite{Karatzas91}) (or the Burkholder-Davis-Gundy inequality, see, e.g., \cite{Burkholder70}); we omit the routine details.

For any $b\in\mathbb{X}_\beta$, the norm $|b|_\beta$ satisfies $|b(s,\mathbf{x})|\le |b|_\beta(1+|\mathbf{x}|)e^{\beta(T-s)}$. Taking expectations and using the previous moment bound,
\begin{displaymath}
\mathbb{E}^{t,x}\left[\big|b(s,\mathbf{X}_s^u)\big|\right]
\le \big|b\big|_\beta e^{\beta(T-s)}\mathbb{E}^{t,x}\Bigl[1+\big|\mathbf{X}_s^u\big|\Bigr]
\le C\big|b\big|_\beta\left(1+|x|\right)e^{\beta(T-s)}.
\end{displaymath}
\end{proof}

For any $u, v\in\mathbb{X}_\beta$ and $0\leq t<s\leq T$, we define
\begin{displaymath}
\begin{aligned}
\mathbf{X}_s^u &:= x + \int_{t}^{s}\mu\bigl(\tau,\mathbf{X}_\tau^u, u(\tau,\mathbf{X}_\tau^u), \nabla_{x} u(\tau,\mathbf{X}_\tau^u)\bigr)\mathrm{d}\tau
                + \int_{t}^{s}\sigma\bigl(\tau,\mathbf{X}_\tau^u,u(\tau,\mathbf{X}_\tau^u)\bigr)\mathrm{d}\mathbf{B}_\tau,\\
\mathbf{X}_s^v &:= x + \int_{t}^{s}\mu\bigl(\tau,\mathbf{X}_\tau^v, v(\tau,\mathbf{X}_\tau^v), \nabla_{x} v(\tau,\mathbf{X}_\tau^v)\bigr)\mathrm{d}\tau
                + \int_{t}^{s}\sigma\bigl(\tau,\mathbf{X}_\tau^v,v(\tau,\mathbf{X}_\tau^v)\bigr)\mathrm{d}\mathbf{B}_\tau.
\end{aligned}
\end{displaymath}
Taking the difference, we obtain
\begin{equation}\label{eq:differencex}
\begin{aligned}
\big|\mathbf{X}_s^u - \mathbf{X}_s^v\big|
&\leq \int_{t}^{s}\bigl|\mu\big(\tau,\mathbf{X}_\tau^u, u(\tau,\mathbf{X}_\tau^u), \nabla_{x} u(\tau,\mathbf{X}_\tau^u)\big)
        - \mu\big(\tau,\mathbf{X}_\tau^v, v(\tau,\mathbf{X}_\tau^v), \nabla_{x} v(\tau,\mathbf{X}_\tau^v)\big)\bigr|\mathrm{d}\tau\\
&\quad + \biggl|\int_{t}^{s}\Bigl[\sigma(\tau,\mathbf{X}_\tau^u,u(\tau,\mathbf{X}_\tau^u))
        - \sigma(\tau,\mathbf{X}_\tau^v,v(\tau,\mathbf{X}_\tau^v))\Bigr]\mathrm{d}\mathbf{B}_\tau\biggr|
\triangleq H_1(s,\omega)+H_2(s,\omega).
\end{aligned}
\end{equation}
To estimate this difference we introduce the following lemma.

\begin{lemma}\label{lem:tail}
  Let $u,v\in\mathbb{X}_\beta$ satisfy $|\nabla_{x} u(t,\mathbf{x})|\leq M$ and $|\nabla^2_{x} u(t,\mathbf{x})|\leq \frac{M}{\sqrt{T-t}}$.
  Then there exist constants $C>0$ and $\delta>0$ independent of $u$ and $v$ such that for all $T-\delta<t<s\leq T$,
\begin{displaymath}
  \mathbb{E}^{t,x}\left[\big|\mathbf{X}_s^u - \mathbf{X}_s^v\big|\right]
  \leq \frac{C}{\sqrt{\beta}}\,\big\|u-v\big\|_{\beta}\, e^{\beta(T-t)}\big(1+|x|\big).
\end{displaymath}
\end{lemma}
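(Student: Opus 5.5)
Starting from the splitting \eqref{eq:differencex}, I will estimate $\mathbb{E}^{t,x}[H_1]$ and $\mathbb{E}^{t,x}[H_2]$ separately, and then close the argument with a Gr\"onwall-type step on the short interval $[t,s]\subset(T-\delta,T]$. Write $D_\tau:=\mathbb{E}^{t,x}|\mathbf{X}_\tau^u-\mathbf{X}_\tau^v|$. For the drift, Assumption~\textbf{A1} gives
\[
|\mu(\cdot,\mathbf{X}^u,u(\mathbf{X}^u),\nabla u(\mathbf{X}^u))-\mu(\cdot,\mathbf{X}^v,v(\mathbf{X}^v),\nabla v(\mathbf{X}^v))|\le L\bigl(|\mathbf{X}^u-\mathbf{X}^v|+|u(\mathbf{X}^u)-v(\mathbf{X}^v)|+|\nabla u(\mathbf{X}^u)-\nabla v(\mathbf{X}^v)|\bigr).
\]
I then insert the intermediate point $\mathbf{X}^v_\tau$ in each difference. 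For the function values, $|u(\mathbf{X}^u)-u(\mathbf{X}^v)|\le M|\mathbf{X}^u-\mathbf{X}^v|$. For the gradients, $|\nabla u(\mathbf{X}^u)-\nabla u(\mathbf{X}^v)|\le \frac{M}{\sqrt{T-\tau}}|\mathbf{X}^u-\mathbf{X}^v|$, using the Hessian hypothesis on $u$ (this is why only $u$, not $v$, needs second-derivative control). The remaining terms $|u-v|(\tau,\mathbf{X}^v_\tau)$ and $|\nabla u-\nabla v|(\tau,\mathbf{X}^v_\tau)$ are controlled by Lemma~\ref{lem:moment} as $C\|u-v\|_\beta e^{\beta(T-\tau)}(1+|x|)$.

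For the diffusion term, I bound $\mathbb{E}H_2\le(\mathbb{E}H_2^2)^{1/2}$ and apply It\^o's isometry together with \textbf{A2}:
\[
\mathbb{E}H_2^2\le C\int_t^s\Bigl(\mathbb{E}|\mathbf{X}^u_\tau-\mathbf{X}^v_\tau|^2+\|u-v\|_\beta^2e^{2\beta(T-\tau)}(1+|x|)^2\Bigr)\mathrm{d}\tau .
\]
Because an $L^1$ quantity cannot be closed against an $L^2$ one, I will work with $E_\tau:=(\mathbb{E}|\mathbf{X}^u_\tau-\mathbf{X}^v_\tau|^2)^{1/2}$ throughout, using $D_\tau\le E_\tau$. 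The source term is then estimated by $\int_t^{s}e^{2\beta(T-\tau)}\mathrm{d}\tau\le e^{2\beta(T-t)}/(2\beta)$, which is exactly where the factor $1/\sqrt\beta$ appears. Similarly, the drift source contributes $\int_t^s e^{\beta(T-\tau)}\mathrm{d}\tau\le e^{\beta(T-t)}/\beta\le e^{\beta(T-t)}/\sqrt\beta$ for $\beta\ge1$. This step also needs second moments of $u-v$ along $\mathbf{X}^v$, which follow from the $L^2$ moment bound obtained in the proof of Lemma~\ref{lem:moment}.

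Collecting these estimates should give
\[
E_s\le C\int_t^s\Bigl(1+\tfrac{1}{\sqrt{T-\tau}}\Bigr)E_\tau\,\mathrm{d}\tau+C\Bigl(\int_t^sE_\tau^2\,\mathrm{d}\tau\Bigr)^{1/2}+\tfrac{C}{\sqrt\beta}\|u-v\|_\beta e^{\beta(T-t)}(1+|x|).
\]
The main obstacle is closing this inequality, because the kernel $(T-\tau)^{-1/2}$ is singular at $T$. It is, however, integrable: $\int_t^T(T-\tau)^{-1/2}\mathrm{d}\tau=2\sqrt{T-t}<2\sqrt\delta$. My first attempt will be to set $S:=\sup_{\tau\in[t,s]}E_\tau$, so that the right-hand side is at most $C(\delta+2\sqrt\delta+\sqrt\delta)S$ plus the source term. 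Choosing $\delta$ small so that this coefficient is at most $1/2$ lets me absorb it and obtain $S\le \frac{2C}{\sqrt\beta}\|u-v\|_\beta e^{\beta(T-t)}(1+|x|)$. The small-interval condition $T-\delta<t$ is precisely what makes this absorption work. The resulting $\delta$ depends only on $L,M,\lambda$, hence not on $u,v$ or $\beta$.

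A subtlety I must check is that $S$ is finite a priori; Lemma~\ref{lem:moment} gives this. If time-singular weights cause trouble, an alternative is a singular Gr\"onwall (Henry-type) inequality with kernel $(T-\tau)^{-1/2}$. Finally, $\mathbb{E}|\mathbf{X}^u_s-\mathbf{X}^v_s|\le E_s\le S$ yields the claim.
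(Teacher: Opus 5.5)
Your proposal is correct and follows essentially the paper's route: the same $H_1+H_2$ splitting with the intermediate point $\mathbf{X}^v_\tau$, the Hessian bound on $u$ producing the integrable weight $(T-\tau)^{-1/2}$, the source integral $\int_t^s e^{2\beta(T-\tau)}\,\mathrm{d}\tau\le e^{2\beta(T-t)}/(2\beta)$ giving the factor $1/\sqrt{\beta}$, and absorption of the $\mathbf{X}^u-\mathbf{X}^v$ terms once $T-t<\delta$. The only difference is in how that absorption is set up. The paper bounds $\mathbb{E}^{t,x}\bigl[\sup_{\tau}|\mathbf{X}^u_\tau-\mathbf{X}^v_\tau|^2\bigr]$ using the Burkholder--Davis--Gundy inequality and a pathwise estimate of $H_1$. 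You instead bound $\sup_{\tau}\bigl(\mathbb{E}^{t,x}|\mathbf{X}^u_\tau-\mathbf{X}^v_\tau|^2\bigr)^{1/2}$ via It\^o's isometry and Minkowski's integral inequality, which is slightly more elementary and is all the fixed-time statement requires.
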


\begin{proof}
From the definition of $H_1$ and $H_2$ in \eqref{eq:differencex}, we obtain $|\mathbf{X}_s^u-\mathbf{X}_s^v|\le H_1(s,\omega)+H_2(s,\omega)$. Squaring and taking expectations, then applying the Cauchy--Schwarz inequality gives
\begin{displaymath}
\mathbb{E}^{t,x}\left[\big|\mathbf{X}_s^u - \mathbf{X}_s^v\big|\right]
\leq \left(\mathbb{E}^{t,x}\left[|\mathbf{X}_s^u - \mathbf{X}_s^v|^2\right]\right)^{1/2}
\leq \left(2\mathbb{E}^{t,x}\left[H_1^2\right]+ 2\mathbb{E}^{t,\mathbf{x}}\left[H_2^2\right]\right)^{1/2}.
\end{displaymath}
We estimate the $H_1$ and $H_2$ terms separately.

To estimate $H_1$, we leverage the Lipschitz property of $\mu$ and the bounds established in Lemma~\ref{lem:boundofgu}. This yields the following inequality for $H_1$,
\begin{equation}\label{eq:H1}
\begin{aligned}
H_1 &\le L\int_{t}^{s}\Bigl(\big|\mathbf{X}_\tau^u - \mathbf{X}_\tau^v\big|
      + \big|u(\tau,\mathbf{X}_\tau^u)-u(\tau,\mathbf{X}_\tau^v)\big|
      + \big|u(\tau,\mathbf{X}_\tau^v)-v(\tau,\mathbf{X}_\tau^v)\big| \\
     &\qquad\qquad + \big|\nabla_{x} u(\tau,\mathbf{X}_\tau^u)-\nabla_{x} u(\tau,\mathbf{X}_\tau^v)\big|
      + \big|\nabla_{x} u(\tau,\mathbf{X}_\tau^v)-\nabla_{x} v(\tau,\mathbf{X}_\tau^v)\big|\Bigr)\mathrm{d}\tau \\
&\le L\int_{t}^{T}\left(1+M+\frac{M}{\sqrt{T-\tau}}\right)
       \big|\mathbf{X}_\tau^u-\mathbf{X}_\tau^v\big|\,\mathrm{d}\tau
     + L\int_{t}^{T}\Big(\big|u(\tau,\mathbf{X}_\tau^v)-v(\tau,\mathbf{X}_\tau^v)\big|
      + \big|\nabla_{x} u(\tau,\mathbf{X}_\tau^v)-\nabla_{x} v(\tau,\mathbf{X}_\tau^v)\big|\Big)\mathrm{d}\tau \\
&\le C\sqrt{T-t}\sup_{\tau\in[t,T]}\big|\mathbf{X}_\tau^u-\mathbf{X}_\tau^v\big|
     + L\int_{t}^{T}\Bigl(\big|u(\tau,\mathbf{X}_\tau^v)-v(\tau,\mathbf{X}_\tau^v)\big|
      + \big|\nabla_{x}u(\tau,\mathbf{X}_\tau^v)-\nabla_{x}v(\tau,\mathbf{X}_\tau^v)|\Bigr)\mathrm{d}\tau.
\end{aligned}
\end{equation}
Using Lemma~\ref{lem:moment} we can bound the expectation of $H_1^2$ (and also its supremum) by
\begin{equation}\label{eq:H12}
\begin{aligned}
\mathbb{E}^{t,x}\left[\sup_{s\in[t,T]} H_1^2\right]
&\le C(T-t)\,\mathbb{E}^{t,x}\left[\sup_{\tau\in[t,T]}|\mathbf{X}_\tau^u - \mathbf{X}_\tau^v|^2\right]
+ C\int_{t}^{T}\mathbb{E}^{t,x}\left[\big|u(\tau,\mathbf{X}_\tau^v)-v(\tau,\mathbf{X}_\tau^v)\right|^2
      +\big|\nabla_x u(\tau,\mathbf{X}_\tau^v)-\nabla_x v(\tau,\mathbf{X}_\tau^v)\big|^2\Bigr]\mathrm{d}\tau \\
&\le C(T-t)\,\mathbb{E}^{t,x}\left[\sup_{\tau\in[t,T]}\big|\mathbf{X}_\tau^u-\mathbf{X}_\tau^v\big|^2\right]
   + C\big\|u-v\big\|_{\beta}^2\int_{t}^{T}e^{2\beta(T-\tau)}\big(1+|x|\big)^2\mathrm{d}\tau \\
&\le C(T-t)\,\mathbb{E}^{t,x}\left[\sup_{\tau\in[t,T]}|\mathbf{X}_\tau^u-\mathbf{X}_\tau^v|^2\right]
   + \frac{C}{\beta}\,\big\|u-v\big\|_{\beta}^2\,e^{2\beta(T-t)}\big(1+|x|\big)^2 .
\end{aligned}
\end{equation}

To estimate $H_2$, we first apply the Burkholder-Davis-Gundy inequality \cite[Theorem~3.3.28]{Karatzas91} to convert the supremum of the stochastic integral into an $L^2$-estimate of the integrand.  Using the Lipschitz continuity of $\sigma$ together with $|\nabla u|<M$ and expanding the square yields
\begin{equation}\label{eq:H2}
\begin{aligned}
\mathbb{E}^{t,\mathbf{x}}\left[\sup_{s\in[t,T]} H_2^2\right]
&\le C\int_{t}^{T}\mathbb{E}^{t,x}\left[\bigl|\sigma\big(\tau,\mathbf{X}_\tau^u, u(\tau,\mathbf{X}_\tau^u)\big)
   - \sigma\big(\tau,\mathbf{X}_\tau^v, v(\tau,\mathbf{X}_\tau^v)\big)\bigr|^2\right]\mathrm{d}\tau \\[4pt]
&\le C\int_{t}^{T}\mathbb{E}^{t,x}\Bigl[(1+M)\big|\mathbf{X}_\tau^u-\mathbf{X}_\tau^v\big|
   + \big|u(\tau,\mathbf{X}_\tau^u)-v(\tau,\mathbf{X}_\tau^v)\big|\Bigr]^2\mathrm{d}\tau \\
&\le C\int_{t}^{T}\mathbb{E}^{t,x}\left[|\mathbf{X}_\tau^u-\mathbf{X}_\tau^v|^2\right]\mathrm{d}\tau
   + C\|u-v\|_{\beta}^2\int_{t}^{T}e^{2\beta(T-\tau)}\big(1+|x|\big)^2\mathrm{d}\tau \\
&\le C(T-t)\,\mathbb{E}^{t,x}\left[\sup_{\tau\in[t,T]}|\mathbf{X}_\tau^u-\mathbf{X}_\tau^v|^2\right]
   + \frac{C}{\beta}\,\|u-v\|_{\beta}^2\,e^{2\beta(T-t)}\big(1+|x|\big)^2 .
\end{aligned}
\end{equation}
The last two lines follow from separating the mixed terms by the inequality $(a+b)^2\le 2a^2+2b^2$, then estimating $|u(\tau,\mathbf{X}_\tau^v)-v(\tau,\mathbf{X}_\tau^v)|$ via the norm $\|\cdot\|_\beta$ and using the moment bound in Lemma~\ref{lem:moment}.

By combining \eqref{eq:H12} and \eqref{eq:H2}, we derive the following result
\begin{equation}\label{eq:H1H2}
\begin{aligned}
\mathbb{E}^{t,x}\left[|\mathbf{X}_s^u-\mathbf{X}_s^v|^2\right]
&\le \mathbb{E}^{t,x}\left[\sup_{\tau\in[t,T]}|\mathbf{X}_\tau^u-\mathbf{X}_\tau^v|^2\right]
\le 2\,\mathbb{E}^{t,x}\left[\sup_{s}H_1^2 + 2\,\mathbb{E}^{t,\mathbf{x}}\sup_{s}H_2^2\right] \\
&\le C(T-t)\,\mathbb{E}^{t,x}\left[\sup_{\tau\in[t,T]}\big|\mathbf{X}_\tau^u-\mathbf{X}_\tau^v\big|^2\right]
   + \frac{C}{\beta}\,\big\|u-v\big\|_{\beta}^2\,e^{2\beta(T-t)}\big(1+|x|\big)^2.
\end{aligned}
\end{equation}
Next, let us select a small positive $\delta>0$ such that $T-t<\delta$, we have $C(T-t)<\frac{1}{2}$.  For such values of $t$, this leads to
\begin{displaymath}
\frac{1}{2}\,\mathbb{E}^{t,x}\left[\sup_{\tau\in[t,T]}|\mathbf{X}_\tau^u-\mathbf{X}_\tau^v|^2\right]
\le \frac{C}{\beta}\,\|u-v\|_{\beta}^2 e^{2\beta(T-t)}\big(1+|x|\big)^2 .
\end{displaymath}
As a consequence, we obtain
\begin{displaymath}
\left(\mathbb{E}^{t,x}\left[\left|\mathbf{X}_s^u-\mathbf{X}_s^v\right|\right]\right)^2
\le \mathbb{E}^{t,x}\left[\big|\mathbf{X}_s^u-\mathbf{X}_s^v\big|^2\right]
\le \frac{2C}{\beta}\,\big\|u-v\big\|_{\beta}^2\,e^{2\beta(T-t)}\big(1+|x|\big)^2 .
\end{displaymath}
Taking square roots gives exactly the claimed estimate.
\end{proof}

\begin{lemma}\label{lem:head}
Let $u$, $v\in\mathbb{X}_\beta$ satisfy $|\nabla_{x}u|<M$ and $|\nabla^2_{x}u|<\frac{M}{\sqrt{T-t}}$. For any $\delta>0$ there exists a constant $C=C(\delta)$ such that for all $0<t<s<T-\delta$,
\begin{displaymath}
\mathbb{E}^{t,x}\left[\big|\mathbf{X}_{s}^u-\mathbf{X}_{s}^v\big|\right]
\leq \frac{C}{\sqrt{\beta}}\,\big\|u-v\big\|_\beta\, e^{\beta(T-t)}\big(1+|x|\big).
\end{displaymath}
\end{lemma}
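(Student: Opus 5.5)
The plan mirrors the proof of Lemma~\ref{lem:tail}, using the decomposition \eqref{eq:differencex} into $H_1$ and $H_2$. The new feature is that on $[t,s]\subset[0,T-\delta)$ the Hessian bound $|\nabla_x^2 u|\le M/\sqrt{T-\tau}\le M/\sqrt{\delta}$ is bounded. So the drift and diffusion along $\mathbf{X}^u$ are globally Lipschitz in the state, with constant $C(\delta)=L(1+M+M/\sqrt{\delta})$. With a bounded Lipschitz constant we can use a Gr\"onwall argument over the whole interval instead of the small-time absorption used in Lemma~\ref{lem:tail}.

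First, we bound $H_1$ by adding and subtracting $u(\tau,\mathbf{X}^v_\tau)$ and $\nabla_x u(\tau,\mathbf{X}^v_\tau)$, as in \eqref{eq:H1}:
\begin{displaymath}
H_1(s)\le C(\delta)\int_t^s|\mathbf{X}^u_\tau-\mathbf{X}^v_\tau|\,\mathrm{d}\tau
+L\int_t^s\bigl(|u-v|+|\nabla_x u-\nabla_x v|\bigr)(\tau,\mathbf{X}^v_\tau)\,\mathrm{d}\tau .
\end{displaymath}
Similarly, by It\^o's isometry (or BDG), $\mathbb{E}[H_2(s)^2]$ is bounded by $C\int_t^s\mathbb{E}|\mathbf{X}^u_\tau-\mathbf{X}^v_\tau|^2\,\mathrm{d}\tau$ plus $C\int_t^s\mathbb{E}|u-v|^2(\tau,\mathbf{X}^v_\tau)\,\mathrm{d}\tau$. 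Squaring, applying Cauchy--Schwarz in time, and taking expectations gives, for $\phi(s):=\mathbb{E}^{t,x}|\mathbf{X}^u_s-\mathbf{X}^v_s|^2$,
\begin{displaymath}
\phi(s)\le C(\delta)\int_t^s\phi(\tau)\,\mathrm{d}\tau + C\,\|u-v\|_\beta^2\int_t^s e^{2\beta(T-\tau)}\,\mathbb{E}^{t,x}\bigl(1+|\mathbf{X}^v_\tau|\bigr)^2\mathrm{d}\tau .
\end{displaymath}
Here the weighted norm gives $|(u-v)(\tau,y)|+|\nabla_x(u-v)(\tau,y)|\le\|u-v\|_\beta e^{\beta(T-\tau)}(1+|y|)$. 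The second-moment bound $\mathbb{E}^{t,x}|\mathbf{X}^v_\tau|^2\le C(1+|x|)^2$ is established inside the proof of Lemma~\ref{lem:moment}. Since $\int_t^s e^{2\beta(T-\tau)}\,\mathrm{d}\tau\le e^{2\beta(T-t)}/(2\beta)$, the source term is at most $\frac{C}{\beta}\|u-v\|_\beta^2e^{2\beta(T-t)}(1+|x|)^2$.

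Next, Gr\"onwall's inequality on $[t,s]$ with $s-t\le T$ gives
\begin{displaymath}
\phi(s)\le e^{C(\delta)T}\,\frac{C}{\beta}\,\|u-v\|_\beta^2e^{2\beta(T-t)}(1+|x|)^2 .
\end{displaymath}
Jensen's inequality $\mathbb{E}|\cdot|\le(\mathbb{E}|\cdot|^2)^{1/2}$ then yields the claim with constant $C(\delta)=\sqrt{C e^{C(\delta)T}}$.

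The main obstacle is the uniform-in-$\beta$ treatment of the source term. The estimate must keep the factor $e^{2\beta(T-\tau)}$ inside the time integral before Gr\"onwall is applied, so that the $1/\beta$ gain survives and $C(\delta)$ stays independent of $\beta$. The Gr\"onwall constant $e^{C(\delta)T}$ is harmless because it does not involve $\beta$. A second point to check is that the Lipschitz estimate for $\nabla_x u$ is applied only at times $\tau\le s<T-\delta$, which is exactly where the Hessian bound is finite. Lemma~\ref{lem:tail} separately covers the region near $T$, where this constant blows up.
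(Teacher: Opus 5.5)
Your proposal is correct and follows essentially the same route as the paper. Both arguments use the $H_1$/$H_2$ splitting and the bound $M/\sqrt{T-\tau}\le M/\sqrt{\delta}$ on $[t,s]$ to get a finite Lipschitz constant. Both then apply Cauchy--Schwarz in time for $H_1$ and It\^o's isometry for $H_2$, integrating $e^{2\beta(T-\tau)}$ to produce the $1/\beta$ factor, and finish with Gr\"onwall and Jensen.
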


\begin{proof}
Fix $t\in[0,T-\delta]$ and $s\in(t,T-\delta]$. Recall that $|\mathbf{X}_s^u-\mathbf{X}_s^v|\le H_1+H_2$, where $H_1$ and $H_2$ are defined in \eqref{eq:differencex}. We estimate their second moments separately.

To estimate $H_1$, we leverage the Lipschitz property of $\mu$ along with the bounds $|\nabla_{x}u|<M$ and $|\nabla^2_{x}u|\le M/\sqrt{T-\tau}$, which allows us to derive
\begin{displaymath}
H_1 \le L\int_{t}^{s}\left(1+M+\frac{M}{\sqrt{T-\tau}}\right)
\big|\mathbf{X}_\tau^u-\mathbf{X}_\tau^v\big|\,\mathrm{d}\tau
+ L\int_{t}^{s}\left(\big|u(\tau,\mathbf{X}_\tau^v)-v(\tau,\mathbf{X}_\tau^v)\big|
+\big|\nabla_{x}u(\tau,\mathbf{X}_\tau^v)-\nabla_{x}v(\tau,\mathbf{X}_\tau^v)\big|\right)\mathrm{d}\tau.
\end{displaymath}
Since $T-\tau\ge\delta$ on the integration interval, we have $(1+M+\frac{M}{\sqrt{T-\tau}})^2\le C/\delta$ (with a constant $C$ depending only on $M$). Applying the Cauchy-Schwarz inequality to the time integrals and taking expectations,
\begin{equation}\label{eq:H1H}
\begin{aligned}
\mathbb{E}^{t,x}\left[H_1^2\right]
&\le 2L^2\,\mathbb{E}^{t,x}\Bigg[\left(\int_{t}^{s}\left(1+M+\frac{M}{\sqrt{T-\tau}}\right)
\big|\mathbf{X}_\tau^u-\mathbf{X}_\tau^v\big|\,\mathrm{d}\tau\right)^{2}\Bigg] \\
&\quad+2L^2\,\mathbb{E}^{t,x}\Bigg[\left(\int_{t}^{s}\left(\big|u(\tau,\mathbf{X}_\tau^v)-v(\tau,\mathbf{X}_\tau^v)\big|
+\big|\nabla_{x} u(\tau,\mathbf{X}_\tau^v)-\nabla_{x} v(\tau,\mathbf{X}_\tau^v)\big|\right)\mathrm{d}\tau\right)^{2}\Bigg] \\
&\le\frac{CT}{\delta}\int_{t}^{s}\mathbb{E}^{t,x}\left[\big|\mathbf{X}_\tau^u-\mathbf{X}_\tau^v\big|^{2}\right]\,\mathrm{d}\tau
+ C\big\|u-v\big\|_{\beta}^{2}\int_{t}^{s}e^{2\beta(T-\tau)}\big(1+|x|\big)^{2}\,\mathrm{d}\tau \\
&\le C\int_{t}^{s}\mathbb{E}^{t,x}\left[\big|\mathbf{X}_\tau^u-\mathbf{X}_\tau^v\big|^{2}\right]\,\mathrm{d}\tau
+ \frac{C}{\beta}\,\big\|u-v\big\|_{\beta}^{2}\,e^{2\beta(T-t)}\big(1+|x|\big)^{2}.
\end{aligned}
\end{equation}
In the second inequality we used $\bigl(\int_t^s f\,\mathrm{d}\tau\bigr)^2\le T\int_t^s f^2\,\mathrm{d}\tau$ together with the bound on the coefficient and the moment estimate of Lemma~\ref{lem:moment} for the terms involving $u$ and $v$.

To estimate $H_2$, we leverage It\^{o}'s isometry, the Lipschitz continuity of $\sigma$, and the decomposition $|u(\tau,\mathbf{X}_\tau^u)-v(\tau,\mathbf{X}_\tau^v)|\le M|\mathbf{X}_\tau^u-\mathbf{X}_\tau^v|+|u(\tau,\mathbf{X}_\tau^v)-v(\tau,\mathbf{X}_\tau^v)|$, thereby deriving
\begin{equation}\label{eq:H2H}
\begin{aligned}
\mathbb{E}^{t,\mathbf{x}}\left[H_2^2\right]
&= \int_{t}^{s}\mathbb{E}^{t,x}\left[\bigl|
     \sigma\big(\tau,\mathbf{X}_\tau^u,u(\tau,\mathbf{X}_\tau^u)\big)
     -\sigma\big(\tau,\mathbf{X}_\tau^v,v(\tau,\mathbf{X}_\tau^v)\big)\bigr|^{2}\right]\,\mathrm{d}\tau \\
&\le L^{2}\int_{t}^{s}\mathbb{E}^{t,x}\left[\left(
     (1+M)\big|\mathbf{X}_\tau^u-\mathbf{X}_\tau^v\big|
     +\big|u(\tau,\mathbf{X}_\tau^v)-v(\tau,\mathbf{X}_\tau^v)\big|\right)^{2}\right]\,\mathrm{d}\tau \\
&\le C\int_{t}^{s}\mathbb{E}^{t,\mathbf{x}}
      \left[\big|\mathbf{X}_\tau^u-\mathbf{X}_\tau^v\big|^{2}\right]\,\mathrm{d}\tau
     + C\big\|u-v\big\|_{\beta}^{2}\int_{t}^{s}e^{2\beta(T-\tau)}\big(1+|x|\big)^{2}\,\mathrm{d}\tau \\
&\le C\int_{t}^{s}\mathbb{E}^{t,\mathbf{x}}
      \left[\big|\mathbf{X}_\tau^u-\mathbf{X}_\tau^v|^{2}\right]\,\mathrm{d}\tau
     + \frac{C}{\beta}\,\big\|u-v\big\|_{\beta}^{2}\,e^{2\beta(T-t)}\big(1+|x|\big)^{2}.
\end{aligned}
\end{equation}

By adding \eqref{eq:H1H} and \eqref{eq:H2H} and applying $|\mathbf{X}_s^u-\mathbf{X}_s^v|^{2}\le 2H_1^{2}+2H_2^{2}$, we can derive
\begin{displaymath}
\mathbb{E}^{t,x}\left[\big|\mathbf{X}_s^u-\mathbf{X}_s^v\big|^{2}\right]
\le C\int_{t}^{s}\mathbb{E}^{t,x}\left[|\mathbf{X}_\tau^u-\mathbf{X}_\tau^v|^{2}\right]\,\mathrm{d}\tau
     + \frac{C}{\beta}\,\big\|u-v\big\|_{\beta}^{2}\,e^{2\beta(T-t)}\big(1+|x|\big)^{2}.
\end{displaymath}
Gr\"onwall's inequality then leads us to
\begin{displaymath}
\mathbb{E}^{t,x}\left[\big|\mathbf{X}_s^u-\mathbf{X}_s^v\big|^{2}\right]
\le \frac{C}{\beta}\,\big\|u-v\big\|_{\beta}^{2}\,e^{2\beta(T-t)}\big(1+|x|\big)^{2}.
\end{displaymath}
Taking square roots and using Jensen's inequality provides the desired estimate.
\end{proof}

Lemma~\ref{lem:tail} and Lemma~\ref{lem:head} collectively establish the following estimate.

\begin{lemma}\label{lem:X}
Let $u,v\in\mathbb{X}_\beta$ satisfy $|\nabla_{x}u|<M$ and $|\nabla^2_{x}u|<\frac{M}{\sqrt{T-t}}$. Then there exists a constant $C>0$ such that for all $t<s\leq T$,
\begin{displaymath}
\mathbb{E}^{t,x}\left[\big|\mathbf{X}_{s}^u-\mathbf{X}_{s}^v\big|\right]
\leq \frac{C}{\sqrt{\beta}}\,\big\|u-v\big\|_{\beta}\,e^{\beta(T-t)}\big(1+|x|\big).
\end{displaymath}
\end{lemma}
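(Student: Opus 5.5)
The proof glues Lemma~\ref{lem:tail} and Lemma~\ref{lem:head} with a Markov-property argument. First I fix the constant $\delta>0$ given by Lemma~\ref{lem:tail}; it depends only on $L,M,T,\lambda$. Then I apply Lemma~\ref{lem:head} with this $\delta$ to get $C(\delta)$, and take $C:=\max\{C_{\rm tail},C(\delta)\}$ after the adjustments below. Given $t<s\le T$, there are three cases. If $t>T-\delta$, Lemma~\ref{lem:tail} applies directly. If $s\le T-\delta$, Lemma~\ref{lem:head} applies directly; the endpoint $s=T-\delta$ and the case $t=0$ follow by continuity of $s\mapsto\mathbb{E}^{t,x}|\mathbf{X}^u_s-\mathbf{X}^v_s|$ in $L^2$. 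The remaining case is $t\le T-\delta<s$.

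For the straddling case I would not try to restart the process at $r:=T-\delta$ with two different initial points. That would require a flow-Lipschitz estimate in the initial condition, which is not available from the stated lemmas. Instead I return to the squared estimates inside the proofs. The proof of Lemma~\ref{lem:head} gives, through Grönwall on $[t,r]$,
\[
\mathbb{E}^{t,x}\bigl[\sup_{\tau\in[t,r]}|\mathbf{X}^u_\tau-\mathbf{X}^v_\tau|^2\bigr]\le \tfrac{C}{\beta}\|u-v\|_\beta^2e^{2\beta(T-t)}(1+|x|)^2 .
\]
The supremum version comes from BDG, as used in Lemma~\ref{lem:tail}. On $[r,s]$ I repeat the decomposition \eqref{eq:differencex}, now started at time $r$, and carry the extra initial term $|\mathbf{X}^u_r-\mathbf{X}^v_r|$. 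The $H_1$ and $H_2$ bounds of Lemma~\ref{lem:tail} hold verbatim on $[r,T]$, because there $C(T-r)=C\delta<\tfrac12$. This yields
\[
\mathbb{E}\bigl[\sup_{[r,T]}|\Delta|^2\bigr]\le 3\,\mathbb{E}|\Delta_r|^2+\tfrac12\mathbb{E}\bigl[\sup_{[r,T]}|\Delta|^2\bigr]+\tfrac{C}{\beta}\|u-v\|_\beta^2e^{2\beta(T-t)}(1+|x|)^2 .
\]
The moment factors here are bounded through Lemma~\ref{lem:moment} under $\mathbb{E}^{t,x}$, using the tower property, so they stay $\le C(1+|x|)^2$. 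I then absorb the $\tfrac12$ term and insert the $[t,r]$ bound for $\mathbb{E}|\Delta_r|^2$. This gives the squared estimate at $s$, and Jensen's inequality finishes the case.

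The main obstacle is the absorption step. The term $\mathbb{E}\sup|\Delta|^2$ has to be finite a priori, which holds by Lemma~\ref{lem:moment}. The singular coefficient $M/\sqrt{T-\tau}$ coming from $\nabla_x^2u$ must also be handled only on the short interval, where it is integrable with total mass $\lesssim\sqrt\delta$. Both are handled exactly as in Lemma~\ref{lem:tail}, so the constant $C$ depends only on $L,M,T,\lambda$ through $\delta$ and not on $\beta$, $u$, or $v$. This independence from $\beta$ is what Theorem~\ref{thm:converge} needs.
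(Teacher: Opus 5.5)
Your argument is correct, and it is the paper's strategy carried out completely. The paper's entire proof of this lemma is the remark that Lemma~\ref{lem:tail} and Lemma~\ref{lem:head} ``collectively establish'' the bound. Read literally, those two lemmas cover only $T-\delta<t<s\le T$ and $t<s\le T-\delta$, and nothing in the paper addresses the case $t\le T-\delta<s$.

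Your treatment of that straddling case is exactly the missing step. With $\Delta_\tau:=\mathbf{X}^u_\tau-\mathbf{X}^v_\tau$, you:
\begin{itemize}
\item keep the same synchronous coupling;
\item restart the decomposition \eqref{eq:differencex} at $r=T-\delta$, carrying the term $|\Delta_r|$;
\item absorb $\mathbb{E}\sup_{[r,T]}|\Delta|^2$ using $\int_r^T M(T-\tau)^{-1/2}\,\mathrm{d}\tau=2M\sqrt{\delta}$;
\item insert the Gr\"onwall bound for $\mathbb{E}^{t,x}|\Delta_r|^2$ from the head argument.
\end{itemize}
You are also right that no flow-Lipschitz estimate is needed, since the difference at time $r$ is simply carried along.

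Three bookkeeping remarks:
\begin{itemize}
\item The factor $3$ in front of the $H_1,H_2$ terms means the $\delta$ you use may have to be smaller than the one given by Lemma~\ref{lem:tail}. This is harmless: Lemma~\ref{lem:head} holds for every $\delta>0$, so take the minimum.
\item On $[t,r]$ you only need $\mathbb{E}^{t,x}|\Delta_r|^2$, not the supremum version.
\item The proof of Lemma~\ref{lem:head} is actually written for $t\in[0,T-\delta]$, $s\in(t,T-\delta]$, so no continuity argument is needed at the endpoints. In any case, continuity in $s$ would not give you $t=0$.
\end{itemize}

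It is worth knowing that the case split can be avoided altogether. Set $k(\tau):=1+M+M(T-\tau)^{-1/2}$ and use Cauchy--Schwarz in the form $\bigl(\int_t^s k|\Delta_\tau|\,\mathrm{d}\tau\bigr)^2\le\bigl(\int_t^T k\,\mathrm{d}\tau\bigr)\int_t^s k|\Delta_\tau|^2\,\mathrm{d}\tau$. Combined with $\int_t^T k\,\mathrm{d}\tau\le(1+M)T+2M\sqrt{T}$, and with $k\ge1$ for the It\^o-isometry term, this gives $\phi(s)\le A+C\int_t^s k(\tau)\phi(\tau)\,\mathrm{d}\tau$. Here $\phi(s):=\mathbb{E}^{t,x}|\Delta_s|^2$ and $A:=\frac{C}{\beta}\|u-v\|_\beta^2e^{2\beta(T-t)}(1+|x|)^2$. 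Gr\"onwall's inequality with the integrable kernel $k$ then yields $\phi(s)\le A\exp\bigl(C\int_t^T k\,\mathrm{d}\tau\bigr)$ for all $t<s\le T$ at once, with a constant independent of $\beta$. That route makes Lemmas~\ref{lem:tail} and~\ref{lem:head} unnecessary. Yours has the advantage of reusing them as stated, at the cost of the three-case bookkeeping.
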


To establish the main theorem, we also require the following gradient estimate.
\begin{lemma}\label{lem:gradientE}
Set $e_n(t,x)=u_n(t,x)-u_{n-1}(t,x)$. Then $|\nabla_{x} e_{n+1}|_{\beta}\leq\frac{C}{\sqrt{\beta}}\,\|e_n\|_{\beta}$.
\end{lemma}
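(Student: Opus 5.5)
We will argue analytically, as in Lemma~\ref{lem:boundofgu}, through the fundamental solution. The function $u_{n+1}$ solves the linear problem \eqref{eq:linparaPDEn} with coefficients frozen at $u_n$, and $u_n$ solves the same kind of problem with coefficients frozen at $u_{n-1}$. Both share the terminal datum $g$. Hence $e_{n+1}=u_{n+1}-u_n$ satisfies
\begin{displaymath}
\partial_t e_{n+1}+\mu_n\cdot\nabla_x e_{n+1}+\tfrac12\operatorname{Tr}\bigl(\sigma_n\sigma_n^\top\nabla_x^2 e_{n+1}\bigr)=R_n,\qquad e_{n+1}(T,\cdot)=0,
\end{displaymath}
where
\begin{displaymath}
R_n:=(f_n-f_{n-1})-(\mu_n-\mu_{n-1})\cdot\nabla_x u_n-\tfrac12\operatorname{Tr}\bigl((\sigma_n\sigma_n^\top-\sigma_{n-1}\sigma_{n-1}^\top)\nabla_x^2u_n\bigr).
\end{displaymath}
With $\Gamma_{n+1}$ the fundamental solution of this operator, we have $e_{n+1}(t,x)=-\int_t^T\!\int\Gamma_{n+1}(t,x,\tau,y)R_n(\tau,y)\,\mathrm{d}y\,\mathrm{d}\tau$. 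Differentiating in $x$ and applying the Gaussian gradient bound $|\nabla_x\Gamma|\le C(\tau-t)^{-(d+1)/2}e^{-c|x-y|^2/(\tau-t)}$ yields
\begin{displaymath}
|\nabla_x e_{n+1}(t,x)|\le C\int_t^T\!\!\int_{\mathbb{R}^d}\frac{e^{-c|x-y|^2/(\tau-t)}}{(\tau-t)^{(d+1)/2}}|R_n(\tau,y)|\,\mathrm{d}y\,\mathrm{d}\tau .
\end{displaymath}

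Next we bound $R_n$ pointwise in terms of $\|e_n\|_\beta$. Assumptions \textbf{A1}--\textbf{A3} give $|f_n-f_{n-1}|+|\mu_n-\mu_{n-1}|\le C(|e_n|+|\nabla_x e_n|)$ and $|\sigma_n\sigma_n^\top-\sigma_{n-1}\sigma_{n-1}^\top|\le C|e_n|$. For the last bound we use that $\sigma$ has linear growth and that Lemma~\ref{lem:boundofgu} bounds the $u_n$ uniformly; if needed, the growth factor is absorbed into the weight $1+|y|$. Lemma~\ref{lem:boundofgu} also gives $|\nabla_xu_n|\le M$ and $|\nabla_x^2u_n|\le M/\sqrt{T-\tau}$. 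Combining these,
\begin{displaymath}
|R_n(\tau,y)|\le C\Bigl(1+\tfrac{1}{\sqrt{T-\tau}}\Bigr)\|e_n\|_\beta\,e^{\beta(T-\tau)}(1+|y|).
\end{displaymath}
We substitute this and write $1+|y|\le 1+|x|+|x-y|$. The spatial integral then equals $(\tau-t)^{-1/2}$ times a constant, multiplied by $(1+|x|)$, plus a harmless $(1+\sqrt{T})$-type term that is absorbed the same way. This gives
\begin{displaymath}
|\nabla_x e_{n+1}(t,x)|\le C\|e_n\|_\beta(1+|x|)\int_t^T\frac{e^{\beta(T-\tau)}}{\sqrt{\tau-t}}\Bigl(1+\frac1{\sqrt{T-\tau}}\Bigr)\mathrm{d}\tau .
\end{displaymath}

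The remaining and key step is to show that this time integral is at most $C\beta^{-1/2}e^{\beta(T-t)}$. After dividing by $e^{\beta(T-t)}$, the weight becomes $e^{-\beta(\tau-t)}$. The nonsingular part is then $\int_t^T e^{-\beta(\tau-t)}(\tau-t)^{-1/2}\,\mathrm{d}\tau\le\sqrt{\pi/\beta}$. The singular part $\int_t^T e^{-\beta(\tau-t)}\bigl((\tau-t)(T-\tau)\bigr)^{-1/2}\mathrm{d}\tau$ is exactly the integral computed in the proof of Lemma~\ref{lem:boundofgu} by the substitution $\sin^2\theta=(\tau-t)/(T-t)$. There it was bounded by $C/\sqrt{\beta(T-t)}$.

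This last bound is the main obstacle: it carries an extra factor $(T-t)^{-1/2}$, which is not uniform as $t\to T$. I would handle it by using the $\sigma$-term bound more carefully. Since $e_{n}(T,\cdot)=0$ and $|\partial_t e_n|$ is controlled, we have $|e_n(\tau,y)|\lesssim\sqrt{T-\tau}$ near the terminal time, which would cancel the $(T-\tau)^{-1/2}$ from $\nabla_x^2u_n$. Failing that, I would use the splitting into the regions $T-t<\delta$ and $T-t\ge\delta$, mirroring Lemmas~\ref{lem:tail}--\ref{lem:head}. On $\{T-t\ge\delta\}$ the factor $(T-t)^{-1/2}\le\delta^{-1/2}$ is absorbed into $C$. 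Near $T$, the bound $\int_t^T((\tau-t)(T-\tau))^{-1/2}\mathrm{d}\tau=\pi$ would be combined with an $L^2$-in-time (Cauchy--Schwarz) version of the estimate to extract the $\beta^{-1/2}$ factor. Taking the supremum of $e^{-\beta(T-t)}|\nabla_xe_{n+1}|/(1+|x|)$ then gives $|\nabla_xe_{n+1}|_\beta\le C\beta^{-1/2}\|e_n\|_\beta$.
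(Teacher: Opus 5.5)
You follow the paper's route: Duhamel formula for $e_{n+1}$, Gaussian bound on $\nabla_x\Gamma$, a pointwise bound on the remainder via Lemma~\ref{lem:boundofgu}, and the substitution $\tau-t=(T-t)\sin^2\theta$. Freezing the operator at $u_n$ rather than $u_{n-1}$ is immaterial. You also correctly isolate the weak step. Because of the factor $(T-\tau)^{-1/2}$ coming from $\nabla_x^2u_n$ in the $\sigma$-term, the singular part of the time integral equals $2\int_0^{\pi/2}e^{-\beta(T-t)\sin^2\theta}\,\mathrm{d}\theta\le\min\{\pi,\,C/\sqrt{\beta(T-t)}\}$, not $C/\sqrt{\beta}$. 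The paper's proof simply asserts $C/\sqrt{\beta}$ at exactly this point, although its own computation in Lemma~\ref{lem:boundofgu} only gives $C/\sqrt{\beta(T-t)}$. Neither of your repairs closes this hole, however.

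In the first repair, the bound $|e_n(\tau,y)|\lesssim\sqrt{T-\tau}$ comes from $e_n(T,\cdot)=0$ and a pointwise bound on $\partial_te_n$. That bound on $\partial_te_n$ uses the equations for $u_n$ and $u_{n-1}$ separately, through Lemma~\ref{lem:boundofgu}, so its constant does not scale with $\|e_n\|_\beta$. Inserting it produces an additive term that does not vanish with $\|e_n\|_\beta$, which is useless for a contraction. Since $\|\cdot\|_\beta$ carries no time regularity, a bound $|e_n|\lesssim\sqrt{T-\tau}\,\|e_n\|_\beta$ is not available. What is available is the Duhamel bound for $e_n$ itself, $|e_n(\tau,y)|\le C\sqrt{T-\tau}\,e^{\beta(T-\tau)}(1+|y|)\,\|e_{n-1}\|_\beta$. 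It removes the singularity, but it yields $|\nabla_xe_{n+1}|_\beta\le C\beta^{-1/2}(\|e_n\|_\beta+\|e_{n-1}\|_\beta)$, which is a different, two-step statement.

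The second repair fails in the region $T-t<\delta$. If $\beta(T-t)\le1$, then $e^{-\beta(\tau-t)}\ge e^{-1}$ on all of $[t,T]$ and the integral is at least $\pi/e$. So no rearrangement of it, Cauchy--Schwarz or otherwise, can extract $\beta^{-1/2}$. In Lemmas~\ref{lem:tail}--\ref{lem:head} the splitting works for two reasons: the $\beta^{-1/2}$ comes from $\int_t^Te^{2\beta(T-\tau)}\,\mathrm{d}\tau$ without a singular weight, and the near-$T$ region is handled by Gr\"onwall absorption. Neither mechanism is present here.

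The loss is not an artifact of the estimates. Consider the model case $d=1$ with frozen operator $\partial_t+\frac12\partial_x^2$ and $g(x)=|x|$, so that $\partial_y^2u_n(\tau,y)=2p_{T-\tau}(y)$ with $p_s$ the heat kernel. Take $R_n=-\frac{c}{2}e_n\,\partial_y^2u_n$, which is the $\sigma$-term when $\sigma^2$ has slope $c$ in $u$. Let $e_n\equiv\epsilon$ up to a thin layer at $T$, so that $\|e_n\|_\beta\le\epsilon$. Chapman--Kolmogorov then gives $e_{n+1}(t,x)\approx c\epsilon(T-t)p_{T-t}(x)$, hence $\partial_xe_{n+1}(t,\sqrt{T-t})\approx-c\epsilon/\sqrt{2\pi e}$ for all $t$ near $T$, with no factor $\beta^{-1/2}$.

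A genuinely new ingredient is therefore needed. You could restrict to $u$-independent $\sigma$. There the $\nabla_x^2u_n$ term disappears and your nonsingular bound $\sqrt{\pi/\beta}$ proves the lemma as stated. Alternatively, you could settle for the two-step estimate above, or put an extra weight $(T-t)^{-1/2}$ on $|e|$ in the norm so that $R_n$ becomes nonsingular.

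Finally, justify $|\sigma_n\sigma_n^\top-\sigma_{n-1}\sigma_{n-1}^\top|\le C|e_n|$ by the boundedness of $\sigma$ implied by the upper ellipticity bound in \textbf{A2}. Absorbing a linear growth factor into the weight would produce $(1+|y|)^2$, which $\mathbb{X}_{\beta,1}$ does not control.
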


\begin{proof}
By the construction of the iteration, $u_{n+1}$ and $u_n$ satisfy the linear equations
\begin{equation}\label{eq:un+1}
\begin{cases}
\partial_t u_{n+1}+\mu_n^\top\,\nabla_{x}u_{n+1}+\frac12\mathrm{Tr}\Big(\sigma_n\sigma_n^\top\nabla^2_{x} u_{n+1}\Big)=f_n(t,x),\\
u_{n+1}(T,x)=g(x),
\end{cases}
\end{equation}
and
\begin{equation}\label{eq:un}
\begin{cases}
\partial_t u_n+\mu_{n-1}^\top\nabla_{x}u_n+\frac{1}{2}\mathrm{Tr}\Big(\sigma_{n-1}\sigma_{n-1}^\top\nabla^2_{x} u_n\Big)=f_{n-1}(t,x),\\
u_n(T,x)=g(x),
\end{cases}
\end{equation}
where $\mu_k(t,\mathbf{x})=\mu(t,\mathbf{x}, u_k(t,\mathbf{x}), \nabla_{x}u_k(t,\mathbf{x}))$, $\sigma_k(t,\mathbf{x})=\sigma(t,\mathbf{x}, u_k(t,\mathbf{x}))$, and $f_k(t,\mathbf{x})=f(t,\mathbf{x}, u_k(t,\mathbf{x}), \nabla_{x}u_k(t,\mathbf{x}))$ and all functions $u_k$ satisfy the uniform bounds of Lemma~\ref{lem:boundofgu} by induction.

Subtracting \eqref{eq:un} from \eqref{eq:un+1} and rearranging gives
\begin{equation}\label{eq:en}
\begin{cases}
\partial_t e_{n+1}+\mu_{n-1}^\top\nabla_{x} e_{n+1}+\frac{1}{2}\mathrm{Tr}\Big(\sigma_{n-1}\sigma_{n-1}^\top\nabla^2_{x}e_{n+1}\Big)
= R_{n+1}(t,x),\\
e_{n+1}(T,x) = 0,
\end{cases}
\end{equation}
where the right-hand side is given by $R_{n+1}=f_n-f_{n-1}-(\mu_n-\mu_{n-1})^\top\nabla_{x}u_{n+1}
-\frac{1}{2}\mathrm{Tr}\left[(\sigma_n\sigma_n^\top-\sigma_{n-1}\sigma_{n-1}^\top)\nabla^2_{x} u_{n+1}\right]$. Notice that only $u_n$ and $u_{n-1}$ appear on the right, which are already known to satisfy the bounds $|\nabla_{x}u_n|\le M$ and $|\nabla^2_{x} u_n|\le M/\sqrt{T-t}$ by induction hypothesis.

Let $\Gamma_n$ be the fundamental solution of the linear operator in \eqref{eq:en}. Then
\begin{displaymath}
e_{n+1}(t,x)=\int_{t}^{T}\int_{\mathbb{R}^d} \Gamma_n(t,x,\tau,y)\,R_{n+1}(\tau,y)\,\mathrm{d}y\,\mathrm{d}\tau.
\end{displaymath}
Using the Lipschitz assumptions on $f,\mu,\sigma$ and the bounds in Lemma~\ref{lem:boundofgu},
\begin{equation}\label{eq:Rn}
\begin{aligned}
\big|R_{n+1}\big|
  \le L\left(\big|e_n\big|+\left(\big|e_n\big|+\big|\nabla_x e_n\big|\right)\big|\nabla_{x}u_{n+1}\big|
  +\left(\big|\sigma_n\big|+\big|\sigma_{n-1}\big|\right)\big|e_n\big|\big|\nabla_x^2 u_{n+1}\big|\right)
  \le L\left(1+M+\frac{2\lambda M}{\sqrt{T-t}}\right)\big|e_n\big|+LM\,\big|\nabla_{x}e_n\big|,
\end{aligned}
\end{equation}
where $\lambda$  is a uniform bound for $|\sigma|$ (see Assumption~\textbf{A.3}). The gradient of the fundamental solution satisfies the Gaussian estimate
\begin{displaymath}
\big|\nabla_{x}\Gamma_n(t,x,\tau,y)\big|
\le \frac{C}{(\tau-t)^{(d+1)/2}}\exp\left(-\frac{c|x-y|^{2}}{\tau-t}\right).
\end{displaymath}
Inserting these bounds and \eqref{eq:Rn} into the differentiated representation of $e_{n+1}$ yields
\begin{equation}\label{eq:gradest}
\begin{aligned}
\big|\nabla_{x}e_{n+1}(t,x)\big|
&\le C\int_{t}^{T}\int_{\mathbb{R}^d}\frac{1}{(\tau-t)^{(d+1)/2}}
\left[\left(1+M+\frac{2\lambda M}{\sqrt{T-\tau}}\right)\big|e_n(\tau,y)\big|
+ M\big|\nabla_x e_n(\tau,y)\big|\right]
\exp\left(-\frac{c|x-y|^{2}}{\tau-t}\right)\,\mathrm{d}y\,\mathrm{d}\tau\\
&\le C\int_{t}^{T}\frac{e^{\beta(T-\tau)}}{\sqrt{\tau-t}}\,
\left[\Bigl(1+M+\frac{2\lambda M}{\sqrt{T-\tau}}\Bigr)|e_n|_{\beta}
+ M\big|\nabla_{x}e_n\big|_{\beta}\right]\int_{\mathbb{R}^d}\big(1+|y|\big)\,
\exp\left(-\frac{c|x-y|^{2}}{\tau-t}\right)\,\mathrm{d}y\,\mathrm{d}\tau.
\end{aligned}
\end{equation}
The inner integral is bounded by $C(\tau-t)^{d/2}(1+|x|)$.  Hence
\begin{displaymath}
\frac{|\nabla_{x}e_{n+1}(t,x)|}{1+|x|} e^{-\beta(T-t)}
\le C\int_{t}^{T}\frac{e^{-\beta(\tau-t)}}{\sqrt{\tau-t}}\,
\left(1+M+\frac{2\lambda M}{\sqrt{T-\tau}}\right)
\Big(\big|e_n\big|_{\beta}+\big|\nabla_{x}e_n\big|_{\beta}\Big)\,\mathrm{d}\tau.
\end{displaymath}
Because $1/\sqrt{T-\tau}$ dominates the parenthesis for small $T-\tau$, we can bound the whole factor by $C/\sqrt{T-\tau}$. Thus
\begin{displaymath}
\frac{|\nabla_{x}e_{n+1}(t,x)|}{1+|x|}e^{-\beta(T-t)}
\le C\int_{t}^{T}\frac{e^{-\beta(\tau-t)}}{\sqrt{(\tau-t)(T-\tau)}}\,\Big(\big|e_n\big|_{\beta}+\big|\nabla_{x} e_n\big|_{\beta}\Big)\,\mathrm{d}\tau.
\end{displaymath}
The remaining integral is bounded by $C/\sqrt{\beta}$ (using the substitution $\tau-t = (T-t)\sin^2\theta$). Hence,
\begin{displaymath}
\frac{|\nabla_{x}e_{n+1}(t,x)|}{1+|x|} e^{-\beta(T-t)}
\le \frac{C}{\sqrt{\beta}}\,\left(\big|e_n\big|_{\beta}+\big|\nabla_{x}e_n\big|_{\beta}\right)
\le \frac{C}{\sqrt{\beta}}\,\big|e_n\big|_{\beta}.
\end{displaymath}
Taking the supremum over $(t,x)$ completes the proof.
\end{proof}

%% If you have bib database file and want bibtex to generate the
%% bibitems, please use
%%
\bibliographystyle{elsarticle-num}
\bibliography{References}

%% else use the following coding to input the bibitems directly in the
%% TeX file.

%% Refer following link for more details about bibliography and citations.
%% https://en.wikibooks.org/wiki/LaTeX/Bibliography_Management

%\begin{thebibliography}{00}
%
%% For numbered reference style
%% \bibitem{label}
%% Text of bibliographic item
%
%\bibitem{lamport94}
%  Leslie Lamport,
%  \textit{\LaTeX: a document preparation system},
%  Addison Wesley, Massachusetts,
%  2nd edition,
%  1994.
%
%\end{thebibliography}

\end{document}